\documentclass[reqno,11pt]{amsart}
\usepackage{amsmath,amssymb,latexsym,soul,cite,mathrsfs}
\usepackage{color,enumitem,graphicx}
\usepackage[colorlinks=true,urlcolor=blue,
citecolor=red,linkcolor=blue,linktocpage,pdfpagelabels,
bookmarksnumbered,bookmarksopen]{hyperref}
\usepackage[english]{babel}
\usepackage[left=2.6cm,right=2.6cm,top=2.9cm,bottom=2.9cm]{geometry}
\usepackage[hyperpageref]{backref}

\usepackage[pagewise]{lineno}
\usepackage{lineno}




\numberwithin{equation}{section}
\newtheorem{theorem}{Theorem}[section]
\newtheorem{lemma}[theorem]{Lemma}
\newtheorem{corollary}[theorem]{Corollary}
\newtheorem{proposition}[theorem]{Proposition}

\renewcommand{\epsilon}{\varepsilon}

\renewcommand{\rightarrow}{\to}

\pretolerance=10000

\newcommand{\ud}{\mathrm{d}}

\title[]{On a  supercritical k-Hessian inequality of Trudinger-Moser type and extremal functions}

\author[J.F.\ de Oliveira]{Jos\'{e} Francisco de Oliveira}\thanks{First author was partially supported by  National Council for Scientific and Technological Development (CNPq) \# 309491/2021-5}
\author[J.M. do \'{O}]{Jo\~{a}o Marcos do \'{O}}
\thanks{The second author was partially supported by Para\'iba State Research Foundation (FAPESQ) \#3034/2021, and National Council for Scientific and Technological Development (CNPq) \#312340/2021-4 and \#429285/2016-7.}
\author[P. Ubilla]{Pedro Ubilla}\thanks{The third author was partially supported by FONDECYT \#1220675.}

\address[J.F.\ de Oliveira]{
\newline\indent Department of Mathematics
	\newline\indent 
	Federal University of Piau\'{i}
	\newline\indent
	64049-550 Teresina, PI, Brazil}
	\email{\href{mailto:jfoliveira@ufpi.edu.br}{jfoliveira@ufpi.edu.br}}	
\address[J.M. do \'{O}]{\newline\indent Department of Mathematics
\newline\indent 
Federal University of Para\'{\i}ba
\newline\indent
58051-900 Jo\~{a}o Pessoa, PB, Brazil}
\email{\href{mailto:jmbo@pq.cnpq.br}{jmbo@pq.cnpq.br}}
\address[P. Ubilla]{\newline\indent Departamento de Matematica
\newline\indent 
Universidad de Santiago de Chile
\newline\indent
Casilla 307, Correo 2, Santiago, Chile}
\email{\href{mailto:pedro.ubilla@usach.cl}{pedro.ubilla@usach.cl}}
\subjclass[2020]{35J66, 35J60, 35J50, 35J96}
\keywords{Hessian equations;  supercritical growth, extremals functions}
\begin{document}

\maketitle

\begin{abstract}
We establish a supercritical Trudinger-Moser type inequality for the $k$-Hessian operator on the space of the $k$-admissible radially symmetric functions $\Phi^{k}_{0,\mathrm{rad}}(B)$, where $B$ is the unit ball in $\mathbb{R}^{N}$. We also prove the existence of extremal functions for this new supercritical inequality. 
\end{abstract}
\section{Introduction}
The critical  Sobolev inequality tells us that the Sobolev space  $W^{1,p}_{0}(\Omega)$  is continuously embedded in $L^{p^*}(\Omega)$, with $p^*=Np/(N-p)$ and $p<N$, where $\Omega\subset\mathbb{R}^N (N\ge 2)$  is a smooth bounded domain.  This embedding is optimal in the sense that the maximal growth for a function $g:\mathbb{R}\to \mathbb{R}$ such that $g\circ u\in L^{1}(\Omega)$ whenever $u\in W^{1,p}_{0}(\Omega)$ is determined by $g(t)=|t|^{p^*}$. In particular, we can not replace the exponent $p^*$ by any $q>p^*$, which causes  a loss of compactness and affects the existence of an extremal function for the associated maximal problem. 
Let $B$ be the unit ball in $\mathbb{R}^{N}$ and $W^{1,2}_{0,\mathrm{rad}}(B)$ the Sobolev space of radially symmetric functions about the origin. The authors in \cite{MR3514752} were able to prove a Sobolev-type inequality giving an embedding into non-rearrangement invariant spaces $L_{2^*+|x|^{\alpha}}(B)$, the variable exponent Lebesgue space, which goes beyond the critical exponent $2^{*}$. 
Further, they analyzed the attainability of the associated supercritical extremal problem. Precisely, the supremum 
\begin{equation}\label{primary}
\mathcal{U}_{N,\alpha}=\sup\left\{\int_{B}|u|^{2^{*}+|x|^{\alpha}}\mathrm{d} x\; |\; u\in W^{1,2}_{0,\mathrm{rad}}(B),\; \|\nabla u\|_{L^2(B)}=1 \right\},
\end{equation}
is finite and is
attained for some $u\in W^{1,2}_{0,\mathrm{rad}}(B)$  provided that 
\begin{equation}\label{alphaDoRUUb}
 0<\alpha<\min\left\{N/2, N-2\right\}.
 \end{equation}
It is worth pointing out that \eqref{primary} is closely  related to the  following supercritical elliptic equation for the Laplace operator
\begin{equation}\label{primary-problem}
\left\{
\begin{aligned}
  -\Delta u & =  u^{2^{*}+|x|^{\alpha}-1}, \; u >0 & \;\;\mbox{in}\;\;& B\\
  u & =  0 & \;\;\mbox{on}\;\;&\partial B.
\end{aligned}
\right. 
\end{equation}
The existence of solutions for \eqref{primary-problem} is also established in \cite{MR3514752} under condition \eqref{alphaDoRUUb}.
In the recent paper \cite{ORU}, a critical Moser-type inequality with loss of compactness due to infinitesimal shocks is obtained.  For more extensions and related results, we recommend \cite{Ngu,CLDOUN, JDE2019,Cao, OLUBNA} and references therein.

Let us consider the $k$-Hessian operator $F_k$,  $1\le k\le N$, 
\begin{equation}\nonumber
F_k[u]=\sum_{1\le i_1<\dots<i_k\le N}\lambda_{i_1}\dots\lambda_{i_k},
\end{equation}
where $\lambda=(\lambda_1, \dots, \lambda_N)$ are the eigenvalues of the real symmetric Hessian matrix $D^{2}u$ of a function $u\in C^{2}(\Omega)$. Note that 
$F_1[u]=\Delta u$ and $F_{N}[u]=\det(D^2u)$ is the Monge-Amp\`{e}re operator. 

   In the well-known paper \cite{MR3487276},
  in order not to lose the ellipticity property of the operators  $F_k$, with $k>1$, L. Caffarelli et al.  proposed the space  of the $k$-admissible functions $\Phi^{k}_0(\Omega),$ that is, the subspace of the functions $u\in C^{2}(\Omega)$ vanishing on $\partial\Omega$ such that $F_{j}[u]\ge 0,\; j=1,\dots,k$.  To state our results properly,  we recall  some properties of $\Phi^{k}_0(\Omega)$ (see  \cite{MR3487276,Wang2,MR1368245, TianWang,ChouWang} for more details).  In \cite{Wang2}, it was observed that the expression
\begin{equation}\label{admissible-norm}
\|u\|_{\Phi_0^k}= \left(\int_{\Omega}(-u)F_{k}[u]\ud x\right)^{\frac{1}{k+1}}, \;\; u\in \Phi^{k}_{0}(\Omega
)
\end{equation}
defines a norm on  $\Phi^{k}_{0}(\Omega
)$. In addition,  in the {\it Sobolev case}  $2k < N,$  one has  the following  Sobolev-type inequality 
\begin{equation}\label{S-inequality}
\|u\|_{L^{p}(\Omega)} \le C \, \|u\|_{\Phi_0^k},  \quad \text{for all} \quad p \in [1,k^*],
\end{equation}
where the critical exponent $k^*$ is defined by
$$
k^* = \frac{N(k+1)}{N-2k}.
$$ 
Moreover, in the {\it Sobolev limit case}, $2k = N,$   it holds 
\begin{equation}
\|u\|_{L^{p}(\Omega)} \le C \, \|u\|_{\Phi_0^k},  \quad \text{for all} \quad p \in [1,\infty).
\end{equation}
In  \cite{OLUBNA}, for the  {\it Sobolev case}  $2k < N,$ it was proved a supercritical  Sobolev type inequality for $k$-admissible radially symmetric functions space $\Phi^{k}_{0,\mathrm{rad}}(B)$. Precisely, for $\alpha>0,$ the supremum 
\begin{equation}\label{mainS}
\mathcal{U}_{k, N,\alpha}=\sup \left\{\int_{B}|u(x)|^{k^{*}+|x|^{\alpha}}\ud x\;\;|\;\; u\in\Phi^{k}_{0,\mathrm{rad}}(B)\;,\; \|u\|_{\Phi^{k}_{0}}=1\right\}
\end{equation}
is finite. Furthermore,  $\mathcal{U}_{k, N,\alpha}$ is attained provided that
$$0<\alpha
<\frac{N-2k}{k}.$$ 

For the Sobolev limit case, $2k=N$, in 
 \cite{TianWang} it was established the following sharp Trudinger-Moser type inequality
\begin{equation}\label{TMinequality} 
\sup_{\|u\|_{\Phi^{k}_{0}}=1} \int_{\Omega}\mathrm{e}^{\mu_N|u|^{\frac{N+2}{N}}}\ud x\le C
\end{equation}
with the critical constant
\begin{eqnarray}
\mu_N &= N\left[\frac{\omega_{N-1}}{k} {\binom{N-1}{k-1}}\right]^{2/N},
\end{eqnarray}
where $C$ is a positive constant depending only on $N$ and $\mathrm{diam}(\Omega)$.  

We observe that, as was proved in \cite[Remark~3.1]{TianWang}, the exponent $\beta_0=(N+2)/N$ is optimal in the sense that we can not replace it in \eqref{TMinequality} by any $\beta_0+c,$ with $c>0$. 
In \cite{Hempel,Moser} it was established the same conclusion for the classical Sobolev space $W^{1,N}_{0}(\Omega)$.

The main target of this work is to analyze \eqref{TMinequality} for the supercritical growth range. Hence, throughout the rest of this paper, we assume $N\ge2, \; 2k=N$ and $f:[0,1)\rightarrow[0,\infty)$ is a continuous function 
such that
\begin{equation}\nonumber
    f(0)=0 \;\;\mbox{and}\;\; f>0 \;\; \mbox{on}\;\; r\in (0,1).
\end{equation}
Our first result reads below.
\begin{theorem}\label{thm1} Assume that there exists $\sigma>1$ such that
\begin{equation}\label{limsup}
   \limsup_{r \rightarrow 0} f(r)|\ln r|^{\sigma} < \infty.
\end{equation}
Then,
\begin{equation}\label{Sad2}
\mathrm{MT}_{N}(f)=\sup_{u\in \Phi^{k}_{0,\mathrm{rad}}(B),\;\; \|u\|_{\Phi^{k}_{0}}=1}\int_{B}\mathrm{e}^{\mu_N|u|^{\frac{N+2}{N}+f(|x|)}}\ud x< \infty,
\end{equation}
where $\mu_N=N\big[\frac{\omega_{N-1}}{k}\binom{N-1}{k-1}\big]^{\frac{2}{N}}.$ 
\end{theorem}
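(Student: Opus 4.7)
My plan is to reduce the question to a one-dimensional estimate via the standard radial change of variables, and then treat the supercritical excess as a perturbation of the critical Tian--Wang inequality \eqref{TMinequality}. Given $u\in\Phi^{k}_{0,\mathrm{rad}}(B)$ with $\|u\|_{\Phi^{k}_{0}}=1$, set $v(t)=-u(e^{-t})$ for $t\in[0,\infty)$, so $v(0)=0$ and $v$ is nondecreasing. A direct computation (using $2k=N$ and writing the Hessian norm in radial coordinates) gives
\[
1=\|u\|^{k+1}_{\Phi^{k}_{0}}=\frac{\omega_{N-1}}{k}\binom{N-1}{k-1}\int_{0}^{\infty}|v'(t)|^{(N+2)/2}\ud t,
\]
and, passing to polar coordinates,
\[
\int_{B}\mathrm{e}^{\mu_{N}|u|^{(N+2)/N+f(|x|)}}\ud x=\omega_{N-1}\int_{0}^{\infty}\mathrm{e}^{\mu_{N}v(t)^{(N+2)/N+f(e^{-t})}-Nt}\ud t.
\]

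The key ingredients are two classical consequences of the norm constraint: first, H\"older's inequality yields the pointwise bound
\[
v(t)^{(N+2)/N}\le \frac{N}{\mu_{N}}\,t,\qquad v(t)\le c_{N}\,t^{N/(N+2)},
\]
(with $c_N=(N/\mu_N)^{N/(N+2)}$), and second, the Tian--Wang estimate \eqref{TMinequality} in the 1D form $\int_{0}^{\infty}\mathrm{e}^{\mu_{N}v^{(N+2)/N}-Nt}\ud t\le C$. On the subregion $\{t:v(t)\le 1\}$ one has $v^{(N+2)/N+f(e^{-t})}\le v^{(N+2)/N}$ since $f\ge 0$, so this part is already dominated by the critical Tian--Wang integral.

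On $\{t:v(t)\ge1\}$ I would factor the exponent as
\[
\mu_{N}v^{(N+2)/N+f(e^{-t})}=\mu_{N}v^{(N+2)/N}+\mu_{N}v^{(N+2)/N}\bigl(v^{f(e^{-t})}-1\bigr),
\]
and use the pointwise bound $\mu_{N}v^{(N+2)/N}\le Nt$ together with $v^{f(e^{-t})}=\exp\bigl(f(e^{-t})\log v\bigr)$ and $\log v\le \frac{N}{N+2}\log t+\log c_{N}$. By hypothesis \eqref{limsup} there exist $M,t_{0}>0$ with $f(e^{-t})\le M t^{-\sigma}$ for $t\ge t_0$, hence $f(e^{-t})\log v \le C_{1}\log t/t^{\sigma}\to 0$, so using $e^{x}-1\le 2x$ for small $x\ge0$,
\[
\mu_{N}v^{(N+2)/N}\bigl(v^{f(e^{-t})}-1\bigr)\le Nt\cdot 2f(e^{-t})\log v\le C_{2}\,\frac{\log t}{t^{\sigma-1}},
\]
which is uniformly bounded for $t\ge t_0$ because $\sigma>1$. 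On the compact range $t\in [0,t_{0}]$, continuity of $f$ and the uniform bound $v(t)\le c_{N}t_{0}^{N/(N+2)}$ give a uniform bound on the excess exponent as well. Combining, the extra factor $\mathrm{e}^{\mu_{N}v^{(N+2)/N}(v^{f}-1)}$ is bounded by a constant $K=K(N,\sigma,M,f)$ independent of $u$, and multiplying with the Tian--Wang bound yields $\mathrm{MT}_{N}(f)\le \omega_{N-1}K\,C<\infty$.

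The main obstacle is the last step: extracting the pointwise bound for the excess exponent in a way that is truly uniform over all admissible $u$. The delicate point is that one cannot separately bound $v$ pointwise by a constant (it grows like $t^{N/(N+2)}$) nor $f$ by zero, yet their logarithmic interaction against the multiplier $Nt$ must remain bounded. The hypothesis $\sigma>1$ is used precisely here; if only $\sigma=1$ were assumed, $\log t/t^{\sigma-1}=\log t$ would blow up and the supercritical term would no longer be absorbable by the Tian--Wang inequality.
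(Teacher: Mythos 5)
Your proposal is correct and follows essentially the same strategy as the paper: reduce to a one-dimensional integral via the radial change of variables, use the pointwise radial estimate $\mu_N v^{(N+2)/N}\le Nt$ (the paper's Lemma 3.1 in the variable $t=-\ln r$), and absorb the supercritical excess by showing that the multiplicative correction $\exp\bigl(\mu_N v^{(N+2)/N}(v^{f}-1)\bigr)$ is bounded uniformly in $u$ thanks to \eqref{limsup} with $\sigma>1$, then invoke the critical Tian--Wang estimate. Your split by the value of $v$ ($\{v\le 1\}$ vs.\ $\{v\ge 1\}$) instead of the paper's $r$-intervals $I_1,I_2,I_3$ is a mild cosmetic streamlining.

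One statement needs repair, however. On the ``compact range $t\in[0,t_0]$'' you invoke ``continuity of $f$''; but $f$ is only defined on $[0,1)$, and the theorem explicitly allows $f(r)$ to be unbounded as $r\to 1^{-}$, i.e.\ as $t\to 0^{+}$. As written, the compact-range step silently assumes a boundedness of $f$ near the boundary that is not part of the hypotheses. Your own splitting already resolves this, but you should say so: the radial estimate forces $v(t)\le\bigl(\tfrac{N}{\mu_N}t\bigr)^{N/(N+2)}\le 1$ for $t\le\mu_N/N$, hence $\{v\ge 1\}\subset\{t\ge\mu_N/N\}$, and on $\{v\ge 1\}\cap[0,t_0]\subset[\mu_N/N,t_0]$ the composite $f(e^{-t})$ is continuous and bounded (since $e^{-t}\in[e^{-t_0},e^{-\mu_N/N}]\subset(0,1)$). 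Making this explicit closes the gap and keeps the argument valid for the full generality of $f$ permitted by the theorem.
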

The above theorem can be seen as a perturbation of the critical inequality proved in \cite{TianWang}, which means that we must study functional inequalities in the supercritical regime. We emphasize that we do not impose any condition on $f(r)$ for $r$ near $1,$ which allows $f(r)$ to be (possibly) unbounded near $1$. 

We note that supercritical Trudinger-Moser  inequalities such as in \eqref{Sad2} were investigated recently in \cite{Ngu2} for the $N$-Laplace operator in the classical Sobolev space  $W^{1,N}_{0,\mathrm{rad}}(B)$, which includes  the  linear case $F_1[u]=\Delta u$ on $W^{1,2}_{0,\mathrm{rad}}(B)$.  Our results improve and  complement relevant results in  \cite{Ngu2} for the $k$-admissible functions space $\Phi^{k}_{0,\mathrm{rad}}(B)$ and  fully nonlinear $k$-Hessian range $k>1$.

Another interesting and intriguing question is whether Trudinger–Moser type inequalities have maximizers. In this sense,  it was investigated in \cite{MZ2020} the existence of a extremal function for the $k$-Hessian inequality of Trudinger–Moser type \eqref{TMinequality} when $\Omega=B$. Our next result concerns the attainability of supercritical inequality of Trudinger–Moser type \eqref{Sad2} for a general $f$ satisfying  \eqref{limsup} with $\sigma>2$.
\begin{theorem}\label{thm2}
Assume \eqref{limsup} with $\sigma>2$. 
Then the supremum $ \mathrm{MT}_{N}(f)$ is  attained for some $u\in \Phi^{k}_{0,\mathrm{rad}}(B)$.
\end{theorem}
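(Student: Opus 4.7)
The plan is to prove attainment by the direct method: extract a weak limit of a maximizing sequence, then use a concentration-compactness dichotomy to either conclude directly or rule out the concentrating case via a Carleson--Chang-type comparison argument.

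First, I would take a maximizing sequence $\{u_n\} \subset \Phi^{k}_{0,\mathrm{rad}}(B)$ with $\|u_n\|_{\Phi^{k}_{0}}=1$ and
$$\int_B \mathrm{e}^{\mu_N |u_n|^{\frac{N+2}{N} + f(|x|)}}\,\mathrm{d}x \to \mathrm{MT}_N(f).$$
The compact embedding $\Phi^{k}_{0,\mathrm{rad}}(B) \hookrightarrow L^p(B)$ for every finite $p$, combined with weak compactness, yields (along a subsequence) $u_n \rightharpoonup u_0$ in $\Phi^{k}_{0}$, $u_n \to u_0$ pointwise a.e.\ and in $L^p(B)$, with $\|u_0\|_{\Phi^{k}_{0}} \le 1$ by lower semicontinuity. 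A Lions-type concentration-compactness analysis adapted to $\Phi^{k}_{0,\mathrm{rad}}(B)$ then produces a dichotomy: either (i) $u_0 \not\equiv 0$, or (ii) $u_0 \equiv 0$ with the exponential integral concentrating at the origin---the only possible concentration point by radial symmetry.

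In case (i), $\|u_0\|_{\Phi^{k}_{0}} > 0$ together with a Br\'ezis--Lieb-type decomposition of the $\Phi^{k}_{0}$-norm gives $\limsup \|u_n-u_0\|_{\Phi^{k}_{0}} < 1$, so Theorem~\ref{thm1} applied at a slightly supercritical level produces the higher-integrability estimate
$$\sup_n \int_B \mathrm{e}^{(1+\delta)\mu_N |u_n|^{\frac{N+2}{N} + f(|x|)}}\,\mathrm{d}x < \infty$$
for some $\delta > 0$. Vitali's theorem then permits passing to the limit inside the integral and yields $\int_B \mathrm{e}^{\mu_N |u_0|^{\frac{N+2}{N} + f(|x|)}}\,\mathrm{d}x = \mathrm{MT}_N(f)$; combined with $\|u_0\|_{\Phi^{k}_{0}}\le 1$, this forces $\|u_0\|_{\Phi^{k}_{0}}=1$ and identifies $u_0$ as the desired extremal.

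The main obstacle is ruling out case (ii). Here I would follow the Carleson--Chang strategy: establish a sharp asymptotic upper bound, in the spirit of the $k$-Hessian analysis of \cite{MZ2020}, showing that under full concentration at the origin one has an explicit inequality $\mathrm{MT}_N(f) \le |B| + C_N^{\ast}$, where the perturbation $|u_n|^{f(|x|)}$ contributes only a vanishing correction thanks to \eqref{limsup} with $\sigma>2$. On the other hand, by modifying the Moser-type sequence used in \cite{TianWang,MZ2020} so as to exploit the supercritical growth of $f(|x|)$ away from the origin, one exhibits admissible radial functions of unit norm whose integral strictly exceeds $|B|+C_N^{\ast}$, so that $\mathrm{MT}_N(f) > |B| + C_N^{\ast}$. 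This contradicts the concentration alternative and forces case (i). The strengthened hypothesis $\sigma>2$ (versus $\sigma>1$ in Theorem~\ref{thm1}) is precisely what is needed: along the concentrating Moser profile one heuristically has $\log|u_n| \sim \tfrac{N-1}{N}\log\log(1/r)$, so the exponent correction $f(|x|)\log|u_n|$ is of order $(\log(1/r))^{-\sigma}\log\log(1/r)$, which vanishes for $\sigma>2$ while still leaving a controllable strict gain in the test-function computation. Reconciling these two sides of the strict inequality is the step I expect to be the most delicate.
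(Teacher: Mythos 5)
Your overall strategy matches the paper's: pass to a weak limit of a maximizing sequence, dispose of the case where the limit is nonzero via a concentration-compactness bound plus Vitali, and rule out concentration by comparing a Carleson--Chang-type sharp upper bound on the concentration level against a strict lower bound from Moser-type test functions, with the $\sigma>2$ hypothesis controlling the error introduced by the perturbation $|u|^{f(|x|)}$. The numerics also translate correctly ($|B|+C_N^*$ corresponds to $\frac{1}{N}(1+\mathrm{e}^{\Psi(N/2+1)+\gamma})$ after rescaling by $\omega_{N-1}$).

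There is, however, a genuine gap in the first step. You extract a weak limit $u_n \rightharpoonup u_0$ ``in $\Phi^{k}_{0}$,'' but $\Phi^{k}_{0,\mathrm{rad}}(B)$ is not a Banach space: it is a cone of $C^{2}$ functions satisfying $F_j[u]\ge 0$, equipped with the norm \eqref{admissible-norm}, and it is not complete nor weakly closed under that norm. A bounded maximizing sequence therefore has a weak limit only in the completion (the weighted Sobolev space $X_1$ in the radial setting), and that limit is a priori neither $C^2$ nor $k$-admissible. The paper handles this by working throughout in $X_1$, obtaining a maximizer $v\in X_1$ for the auxiliary supremum $\mathcal{MT}_N(f)$, and only then invoking a regularity lemma (the Euler--Lagrange identity \eqref{integralsolution} together with the decay estimate of Lemma~\ref{0-beha}) to prove $v\in C^2[0,1]$, followed by a direct verification that $u(x)=-v(|x|)$ satisfies $\bigl(r^{N-j}(-v')^j\bigr)'\ge 0$ for $j=1,\dots,k$, so that $u\in\Phi^k_{0,\mathrm{rad}}(B)$. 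Your proposal never addresses the regularity and admissibility of the limit, and without it the proof does not deliver an extremal in the stated class. A secondary, smaller imprecision: when $u_0\equiv 0$ your dichotomy omits the non-concentrating subcase, which must be treated separately (it yields $\mathcal{MT}_N(f)=1/N$, excluded by the same strict test-function lower bound); and the role of $\sigma>2$ is not that the correction $f(r)\log|u_n|$ vanishes (it already does for any $\sigma>0$) but that the resulting integrand of order $(\log(1/r))^{1-\sigma}\log\log(1/r)$ must be integrable against $r^{-1}\,\mathrm{d}r$ near the origin, i.e.\ one needs $1-\sigma<-1$.
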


As far as we know, the existence of maximizers for either \textit{Sobolev} or \textit{Trudinger-Moser} type supercritical inequalities was obtained only for the particular function $f(r)=r^{\alpha}$ with $ \alpha>0$, as shown in \cite{MR3514752, OLUBNA,Ngu,Ngu2}. Theorem~\ref{thm2} represents the first result to consider the existence of maximizers for a general class of functions $f$. 

As a consequence of Theorems \ref{thm1} and \ref{thm2}, by   choosing $f_0(r)=\gamma \, r^{a}/(1-r)^{b},\;  a >0$ and $b, \gamma \in \mathbb{R},$  we obtain the following:

\begin{corollary}\label{cor1}
Let $ a >0$ and $b, \gamma \in \mathbb{R}$. Then it holds
\begin{equation}\label{CorSad2}
\mathrm{MT}_{N}(f_0)=\sup_{u\in \Phi^{k}_{0,\mathrm{rad}}(B),\;\; \|u\|_{\Phi^{k}_{0}}=1}\int_{B}
\mathrm{e}^{
\mu_N|u|^{\frac{N+2}{N}+\frac{\gamma |x|^{a}}{\left(1-|x|\right)^{b}}
} }
\,\ud x<+\infty.
\end{equation}
In addition, if $\gamma>0$ then $\mathrm{MT}_{N}(f_0)$ is attained.
\end{corollary}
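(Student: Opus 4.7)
The plan is to deduce the corollary directly from Theorems~\ref{thm1} and \ref{thm2} by checking that the specific choice $f_0(r)=\gamma r^{a}/(1-r)^{b}$ falls within their scope whenever $\gamma>0$. The degenerate case $\gamma=0$ collapses to the critical $k$-Hessian Trudinger--Moser inequality \eqref{TMinequality}, which already delivers finiteness of the supremum with no further work.

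Assuming $\gamma>0$, I would first observe that $f_0\colon[0,1)\to[0,\infty)$ is continuous, satisfies $f_0(0)=0$ because $a>0$, and is strictly positive on $(0,1)$; so $f_0$ sits squarely in the standing class for $f$. Note also that if $b>0$ then $f_0(r)\to\infty$ as $r\to 1^{-}$, which is precisely the sort of boundary blow-up the theorems are designed to accommodate (as highlighted in the discussion following Theorem~\ref{thm1}). The case $b\le 0$ is even simpler, since then $(1-r)^{-b}\le 1$ on $[0,1)$ and $f_0$ is bounded.

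The one substantive step is to verify the decay condition \eqref{limsup} for some $\sigma>2$ (which automatically covers the hypothesis $\sigma>1$ needed for Theorem~\ref{thm1}). Fix any such $\sigma$. Then
\[
f_0(r)\,|\ln r|^{\sigma}=\gamma\,\frac{r^{a}\,|\ln r|^{\sigma}}{(1-r)^{b}}.
\]
As $r\to 0^{+}$ the denominator $(1-r)^{b}\to 1$ irrespective of the sign of $b$, while $r^{a}|\ln r|^{\sigma}\to 0$ because any positive power of $r$ dominates any fixed power of $|\ln r|$ at the origin. Hence the limsup in \eqref{limsup} is $0<\infty$.

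With \eqref{limsup} verified for some $\sigma>2$, Theorem~\ref{thm1} immediately gives $\mathrm{MT}_N(f_0)<\infty$, and Theorem~\ref{thm2} supplies an extremal in $\Phi^{k}_{0,\mathrm{rad}}(B)$, completing the two assertions of the corollary. There is no genuine obstacle here: the whole argument reduces to one elementary limit computation and the quotation of the two main theorems, which is precisely why the corollary can be stated as a direct consequence.
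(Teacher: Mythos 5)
Your treatment of $\gamma\ge 0$ matches the paper's: for $\gamma=0$ the claim reduces to the unperturbed inequality \eqref{TMinequality}, and for $\gamma>0$ you correctly place $f_0$ in the standing class ($f_0$ continuous, $f_0(0)=0$, $f_0>0$ on $(0,1)$) and verify \eqref{limsup} — indeed $\limsup_{r\to 0}f_0(r)|\ln r|^\sigma=0$ for every $\sigma>0$, since $r^a|\ln r|^\sigma\to 0$ and $(1-r)^{-b}\to 1$ — so that Theorems~\ref{thm1} and~\ref{thm2} apply verbatim. This is exactly the paper's argument.

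What you omit is the case $\gamma<0$, which the corollary's hypothesis $\gamma\in\mathbb{R}$ also covers in the finiteness claim. Since $f_0\le 0$ there, $f_0$ violates the standing assumption $f\ge 0$ and the two theorems cannot be invoked; the paper instead asserts that finiteness in this range follows directly from \eqref{TMinequality}. You should at least state this reduction. Be aware, though, that the pointwise comparison $|u|^{\frac{N+2}{N}+f_0(|x|)}\le|u|^{\frac{N+2}{N}}$ only holds where $|u|\ge1$; on the boundary region where $|u|<1$ one instead needs $\frac{N+2}{N}+f_0(r)\ge0$ to conclude $|u|^{\frac{N+2}{N}+f_0}\le1$, and this fails whenever $b>0$ (so $f_0(r)\to-\infty$ as $r\to1^-$) or when $b\le 0$ but $|\gamma|$ is large. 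In those regimes the exponent becomes negative while $u$ vanishes on $\partial B$, and the reduction to \eqref{TMinequality} is not immediate. So the $\gamma<0$ case is not as routine as the paper's one-line remark suggests, and a complete proof should explain (or restrict) what happens there.
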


Note that if $\gamma \leq 0,$  \eqref{CorSad2} follows from the  Moser 
 type inequality  \eqref{TMinequality} and the case $\gamma>0$ is a consequence of Theorem~\ref{thm1}. The attainability of $\mathrm{MT}_{N}(f_0)$, with $\gamma, a >0$ and $b \in \mathbb{R}$
follows from Theorem~\ref{thm2}.
\section{Preliminaries}

Consider the weighted Sobolev space $X_1=X_1^{1,k+1}$ of all functions $v\in AC_{\mathrm{loc}}(0,1)$ satisfying
$$
\lim_{r\rightarrow 1}v(r)=0,\;\;\; \int_{0}^{1}r^{N-k}|v^{\prime}|^{k+1}\mathrm{d}r<\infty\;\;\;\mbox{and}\;\;  \int_{0}^{1}r^{N-1}|v|^{k+1}\mathrm{d}r<\infty,
$$
where $AC_{\mathrm{loc}}(0, 1)$ is the set of all locally absolutely continuous functions on $(0,1)$.  
The space $X_1$ is complete with the norm
\begin{equation}\label{fullnorm}
\|v\|_{X_1}=\left(c_N\int_{0}^{1}r^{N-k}|v^{\prime}|^{k+1}\mathrm{d}r\right)^{\frac{1}{k+1}},
\end{equation} 
where $c_{N}=(\omega_{N-1}{\binom{N}{k}})/N$ is a normalising constant, and  $\omega_{N-1}$ represents the area of the unit sphere in $\mathbb{R}^N$.

For reference, let us recall the following Sobolev embeddings. 
If $2k<N$, then the following continuous embedding holds
\begin{equation}\label{Ebeddings}
    X_1\hookrightarrow L^q_{N-1}, \quad \mbox{if}\quad q \in \left.\left[1, k^{*}\right.\right]\;\;\;\; \mbox{(compact if}\;\;q<k^{*}),
\end{equation}
where $k^*=N(k+1)/(N-2k)$ and  $L^q_{\theta}=L^q_{\theta}(0,1),\, q\ge 1,  \theta>-1$ is the weighted Lebesgue space of measurable functions $v$ on $(0,1)$ such that
\begin{equation}\nonumber
 \|v\|_{L^q_{\theta}}=
\left(\int_0^1 r^{\theta}|v|^q\,\mathrm{d}r
\right)^{\frac{1}{q}}<+\infty.
\end{equation}
For $2k=N$, we have the following compact embedding 
\begin{equation}\label{EbeddingsTM}
    X_1\hookrightarrow L^q_{\theta}, \;\; \mbox{if}\;\; q \in [1, \infty)
\end{equation}
for any $\theta\in (-1, \infty)$.
Hence, in contrast with \eqref{Ebeddings}, under the condition $N=2k$, the threshold growth is not achieved in any Lebesgue type space $L^q_{N-1}$. 
Indeed, from \cite[Theorem~1.1]{JJ2012} we have that $\exp\big(\mu \vert v\vert^{{(N+2)}/{N}}\big)\in L^{1}_{N-1}$ for any $\mu>0$ and $v\in X_1$ and  following Trudinger-Moser type inequality holds:
\begin{equation}\label{kPAMS-TM}
\mathcal{MT}_{N}=\sup_{\|v\|_{X_1}= 1}\int_{0}^{1}r^{N-1}\mathrm{e}^{\mu_N \vert v\vert^{\frac{N+2}{N}}}\mathrm{d}r\le c.
\end{equation}
In addition, the constant $\mu_N$ in \eqref{kPAMS-TM}  is sharp in the sense that if we replace $\mu_{N}$ by any $\beta>\mu_N$ the supremum becomes infinite.
For a deeper discussion and applications of the weighted Sobolev space $X_1$, we refer the reader to \cite{JDE2019} and references therein.

We can see that \eqref{admissible-norm} can be written as (cf. \cite{Wang2})
\begin{equation}\label{radial-norm}
\|u\|_{\Phi^{k}_{0}}=\left(c_{N}\int_{0}^{1}r^{N-k}\vert u^{\prime}\vert^{k+1}\ud r\right)^{\frac{1}{k+1}}, \quad \forall u\in\Phi^{k}_{0,\mathrm{rad}}(B).
\end{equation}
Note that taking $v(r)=u(x), \; r=\vert x\vert $, we  have  $v\in X_1,$
\begin{equation}\label{NormX}
\|u\|_{\Phi^{k}_0}=\|v\|_{X_1}
\end{equation}
and
\begin{equation}\label{FX2}
\int_{B}\mathrm{e}^{\mu_N|u|^{\frac{N+2}{N}+f(|x|)}}\ud x=\omega_{N-1}\int_{0}^{1} r^{N-1}\mathrm{e}^{\mu_N|v|^{\frac{N+2}{N}+f(r)}}\ud r
\end{equation}
where $f:[0,1)\rightarrow[0,\infty)$ is any continuous function. 

The important point to note here is that \eqref{NormX} and \eqref{FX2} yield the connection between the spaces  $ \Phi^{k}_{0,\mathrm{rad}}(B)$ and $X_1$. It allows us to analyze the supremum \eqref{Sad2} by means of 
 the  following auxiliary supremum on $X_1$ 
\begin{equation}\label{supremumX1} 
\mathcal{MT}_{N}(f):=\sup_{\|v\|_{X_1}=1}\int_{0}^{1} r^{N-1}\mathrm{e}^{\mu_{N}|v|^{\frac{N+2}{N}+f(r)}}\ud r.
\end{equation}

\section{Sharp supercritical inequalities: Proof of Theorem~\ref{thm1}}

In this section we prove that the supremum  $\mathrm{MT}_{N}(f)$ in \eqref{Sad2} is finite, which is reduced to show that  $\mathcal{MT}_{N}(f)$ is finite. Indeed, \eqref{NormX} and  \eqref{FX2} imply $\mathrm{MT}_{N}(f)\le \omega_{N-1}\mathcal{MT}_{N}(f)$.  

In order to analyze \eqref{supremumX1}, we start with the following radial type lemma.
\begin{lemma} Assume that $2k=N$. Then, for any $v \in X_1$
\begin{equation}\label{r-estimate}
|v(r)| \le \|v\|_{X_1}\big(-\frac{N}{\mu_{N}}\ln r\big)^{\frac{N}{N+2}},\;\; \mbox{for any}\;\; 0<r<1.
\end{equation}
\end{lemma}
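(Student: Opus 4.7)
The plan is to use the fundamental theorem of calculus (exploiting the boundary condition $\lim_{r\to 1}v(r)=0$) and then apply Hölder's inequality to produce the $X_1$-norm on the right-hand side. Since $v\in AC_{\mathrm{loc}}(0,1)$, for any $0<r<1$ we can write
\[
v(r)=-\int_{r}^{1} v'(s)\,\mathrm{d}s,
\qquad\text{so}\qquad
|v(r)|\le \int_{r}^{1} |v'(s)|\,\mathrm{d}s.
\]

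Next, I would factor the integrand as $|v'(s)|=s^{(N-k)/(k+1)}|v'(s)|\cdot s^{-(N-k)/(k+1)}$ and invoke Hölder with the conjugate pair $(k+1,\,(k+1)/k)$:
\[
|v(r)|\le \left(\int_{r}^{1} s^{N-k}|v'(s)|^{k+1}\,\mathrm{d}s\right)^{\!1/(k+1)}\!
\left(\int_{r}^{1} s^{-(N-k)/k}\,\mathrm{d}s\right)^{\!k/(k+1)}.
\]
The key simplification uses $2k=N$: then $(N-k)/k=1$, so the weight integral is simply $\int_r^1 s^{-1}\,\mathrm{d}s=-\ln r$. Bounding the first factor by extending the integration to $(0,1)$ gives $\|v\|_{X_1}^{k+1}/c_N$, yielding
\[
|v(r)|\le \frac{\|v\|_{X_1}}{c_N^{1/(k+1)}}\,(-\ln r)^{k/(k+1)}.
\]

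Finally, I would reconcile the constants. Since $2k=N$, the exponent $k/(k+1)=N/(N+2)$. For the multiplicative constant, using $\binom{N}{k}=(N/k)\binom{N-1}{k-1}$ one gets $c_N=(\omega_{N-1}/N)\binom{N}{k}=(\omega_{N-1}/k)\binom{N-1}{k-1}$, so that $\mu_N=N c_N^{2/N}=N c_N^{1/k}$. Hence $c_N^{1/(k+1)}=(\mu_N/N)^{k/(k+1)}$, and substituting yields exactly
\[
|v(r)|\le \|v\|_{X_1}\bigl(-\tfrac{N}{\mu_N}\ln r\bigr)^{N/(N+2)},
\]
as required. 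There is no real obstacle here; the task is essentially a standard one-dimensional Hölder argument and the only point requiring care is the constant identification via the relation $\mu_N = N c_N^{1/k}$, which makes the inequality sharp.
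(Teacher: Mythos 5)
Your proposal is correct and follows essentially the same route as the paper: write $v(r)$ via the fundamental theorem of calculus, split the integrand as $(s^{(N-k)/(k+1)}|v'|)(s^{-(N-k)/(k+1)})$, apply H\"older with exponents $(k+1,(k+1)/k)$, and use $2k=N$ together with $\mu_N=Nc_N^{2/N}=Nc_N^{1/k}$ to simplify the exponent and constant. Your write-up is simply a bit more explicit about the constant reconciliation than the paper's terse version.
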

\begin{proof} By using H\"{o}lder inequality we have  
\begin{equation*}
\begin{aligned}
|v(r)| \le \int_{r}^{1}|v^{\prime}|\ud s& =\int_{r}^{1}\big(c^{\frac{1}{k+1}}_{N}s^{\frac{N-k}{k+1}}|v^{\prime}|\big)\big( c^{-\frac{1}{k+1}}_{N}s^{-\frac{N-k}{k+1}}\big)\ud s\\
& \le \Big(\frac{N}{\mu_{N}}\Big)^{\frac{N}{N+2}}\|v\|_{X_1}\left(-\ln r\right)^{\frac{N}{N+2}},
\end{aligned}
\end{equation*}
where we have used $\mu_{N}=Nc_{N}^{2/N}$. 
\end{proof}
Following the line of \cite{MR3514752,Ngu2}, we are able to show the following:
\begin{proposition}\label{prop1}  Assume \eqref{limsup} with $\sigma>1$.
Then 
\begin{flushleft}
$(i)$ For any $\beta>0$ and $v\in X_1$, we have
$$
\int_{0}^{1} r^{N-1}\mathrm{e}^{\beta|v|^{\frac{N+2}{N}+f(r)}}\ud r<\infty.
$$
$(ii)$ If $\beta\le \mu_N$ then the following supremum is finite
\begin{equation}\label{supii}
\mathcal{MT}_{N}(f;\beta)=\sup_{\|v\|_{X_1}=1}\int_{0}^{1} r^{N-1}\mathrm{e}^{\beta|v|^{\frac{N+2}{N}+f(r)}}\ud r.
\end{equation}
$(iii)$ If $\beta > \mu_N$ then $\mathcal{MT}_{N}(f;\beta)=\infty.$
\end{flushleft}
\end{proposition}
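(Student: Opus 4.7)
The plan is to split the integral over $(0,1)$ into three regimes in $r$, with the main analytic work concentrated near $r = 0$. The key pointwise estimate I would establish is that, for every $M > 0$, there exist $r_1 \in (0,1)$ and $C' > 0$ such that
\begin{equation*}
|v(r)|^{\frac{N+2}{N} + f(r)} \le |v(r)|^{\frac{N+2}{N}} + C'
\end{equation*}
for all $r \in (0, r_1)$ and all $v \in X_1$ with $\|v\|_{X_1} \le M$. To obtain this, I would insert the radial bound \eqref{r-estimate} in the form $|v(r)| \le M A |\ln r|^{N/(N+2)}$ with $A = (N/\mu_N)^{N/(N+2)}$, so that at points where $|v(r)| \ge 1$,
\begin{equation*}
|v(r)|^{f(r)} \le \exp\left( f(r) \ln(MA) + \tfrac{N f(r)}{N+2} \ln|\ln r|\right).
\end{equation*}
The assumption $\limsup_{r\to 0} f(r)|\ln r|^\sigma < \infty$ with $\sigma > 1$ implies $f(r)|\ln r| \ln|\ln r| \to 0$, and therefore $|v(r)|^{f(r)} - 1 \le C/|\ln r|$ near the origin; multiplying by $|v(r)|^{(N+2)/N} \le (N/\mu_N)|\ln r|$ delivers the claimed pointwise inequality.

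With this estimate in hand, (i) and (ii) reduce to known facts. For (i), I fix $v$ with $M = \|v\|_{X_1}$; on $(0, r_1)$ the pointwise bound gives $e^{\beta |v|^{(N+2)/N+f(r)}} \le e^{\beta C'} e^{\beta |v|^{(N+2)/N}}$, which is integrable against $r^{N-1}$ by the $\exp(\mu|v|^{(N+2)/N}) \in L^1_{N-1}$ result of \cite{JJ2012}. On a compact middle range both $v$ and $f$ are continuous and bounded, so the integrand is bounded; and near $r = 1$ the radial estimate forces $|v(r)| \le 1$, whence $|v|^{(N+2)/N+f(r)} \le |v|^{(N+2)/N} \le 1$ and the integrand is dominated by $e^\beta$. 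For (ii), I apply the same pointwise bound with $M = 1$ so that $C'$ is uniform in $v$ with $\|v\|_{X_1} = 1$; using $\beta \le \mu_N$ I obtain
\begin{equation*}
e^{\beta |v|^{(N+2)/N+f(r)}} \le e^{\beta C'}\, e^{\mu_N |v|^{(N+2)/N}} \qquad \text{on } (0, r_1),
\end{equation*}
and the critical Trudinger--Moser inequality \eqref{kPAMS-TM} controls the integral uniformly. The remaining two ranges contribute a uniform constant as before.

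For (iii), I would test the supremum against the Moser-type sequence $w_n \in X_1$ of norm one defined by the constant value $(N \log n/\mu_N)^{N/(N+2)}$ on $(0, 1/n]$ and the logarithmic profile $(N/\mu_N)^{N/(N+2)}(-\log r)/(\log n)^{2/(N+2)}$ on $[1/n, 1)$; a direct computation using $2k = N$ and $\mu_N = N c_N^{2/N}$ verifies $\|w_n\|_{X_1} = 1$. On $(0, 1/n]$, for large $n$ one has $w_n > 1$, so $f \ge 0$ gives $|w_n|^{(N+2)/N + f(r)} \ge |w_n|^{(N+2)/N} = N\log n/\mu_N$ and
\begin{equation*}
\int_0^{1} r^{N-1} e^{\beta |w_n|^{(N+2)/N + f(r)}} \, dr \ge \int_0^{1/n} r^{N-1} n^{\beta N/\mu_N} \, dr = \tfrac{1}{N}\, n^{N(\beta/\mu_N - 1)},
\end{equation*}
which tends to $\infty$ precisely when $\beta > \mu_N$.

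The main obstacle is the sharp pointwise estimate near $r = 0$: the excess $|v(r)|^{f(r)} - 1$ must decay at least as fast as $1/|\ln r|$, so that after multiplying by $|v(r)|^{(N+2)/N}$ (which grows like $|\ln r|$) one is left with a bounded term rather than an unbounded one. This is exactly the role played by the hypothesis $\sigma > 1$; with only $\sigma = 1$ one would control $f(r) \ln|\ln r|$ but not the critical product $f(r) |\ln r| \ln|\ln r|$, and the uniform bound required for (ii) at the borderline $\beta = \mu_N$ would fail, as can be seen by testing the Moser sequence from (iii) against $f(r) = M/|\ln r|$.
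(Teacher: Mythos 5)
Your proposal is correct and follows essentially the same strategy as the paper: use the pointwise radial estimate \eqref{r-estimate}, split the integral into a region near $r=0$, a compact middle range, and a region near $r=1$, control the near-origin regime by showing that the excess $|v(r)|^{(N+2)/N}\big(|v(r)|^{f(r)}-1\big)$ stays uniformly bounded (reducing everything to the critical inequality \eqref{kPAMS-TM}), and test the sharpness with Moser-type functions. Your normalization of the Moser functions by the parameter $n=\mathrm{e}^{j/N}$ is a cosmetic reparametrization of the paper's sequence \eqref{Moser-sequence}, and your closing remark correctly identifies $f(r)|\ln r|\ln|\ln r|\to 0$, hence $\sigma>1$, as the precise condition making the uniform bound in $(ii)$ work at the borderline $\beta=\mu_N$.

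One small organizational difference worth noting: you prove a single uniform pointwise inequality $|v(r)|^{\frac{N+2}{N}+f(r)}\le |v(r)|^{\frac{N+2}{N}}+C'$ (valid for $\|v\|_{X_1}\le M$ near the origin) and then feed it into both $(i)$ and $(ii)$, whereas the paper handles $(i)$ with a cruder bound $|v|^{f(r)}\le 3/2$ (which suffices because any multiple of $\mu_N$ is admissible for a fixed $v$) and reserves the finer Taylor-expansion control of the excess term for $(ii)$. Your unified treatment is slightly cleaner and equally rigorous; the content is the same.
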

\begin{proof} 
By simplicity, for  $v\in X_1$ we denote
$$
h(r):=h(r;v)=r^{N-1}e^{\beta \vert v(r)\vert^{\frac{N+2}{N}+f(r)}}.
$$
We first prove $(i)$, for that we split
\begin{equation*}
    \int_{0}^{1} h(r)\, \ud r = I_1 + I_2+I_3
\end{equation*}
where 
\begin{equation*}
     I_1 = \,  \int_{\rho}^{1} h(r) \, \ud r, \quad 
     I_2 = \, \int_{0}^{\hat{\rho}} h(r)\, \ud r, \quad 
     I_3 = \,  \int_{\hat{\rho}}^{\rho} h(r)\, \ud r 
\end{equation*}
and $0<\hat{\rho}<\rho<1$ will be chosen later. 
Note that since $h$ is continuous in $[\hat{\rho},\rho],$ we have   $h\in L^{1}(\hat{\rho}, \rho)$ independent of the choices of $\hat{\rho}$ and $\rho$. We claim that there are $0<\hat{\rho}=\hat{\rho}_v<\rho=\rho_v <1$ such that $|v(r)|^{f(r)}\le {3}/{2}$ for either  $r\in [\rho,1)$ or $r\in (0, \hat{\rho}]$.  Indeed,  $\lim_{r\to 1}v(r)=0$ ensures the existence of $\rho>0$ such that $|v|\le 1$ on $[\rho, 1)$. In addition, due to \eqref{limsup} and \eqref{r-estimate}, since $f(0)=0$ we obtain
\begin{equation}\nonumber
    \begin{aligned}
   |v|^{f(r)}&\le\big(\|v\|_{X_1}(-\frac{N}{\mu_{N}}\ln r)^{\frac{N}{N+2}}\big)^{f(r)}\\
    &=\mathrm{e}^{f(r)\ln\big(\|v\|_{X_1}(-\frac{N}{\mu_{N}}\ln r)^{\frac{N}{N+2}}\big)}\\
     &\to 1,\;\; \mbox{as}\;\; r\to 0^{+},
\end{aligned}
\end{equation}
which establish the existence of  $\hat{\rho}>0$. Hence, 
\begin{equation*}
  \mathrm{e}^{\beta|v|^{\frac{N+2}{N}+f(r)}} =\mathrm{e}^{\beta|v|^{\frac{N+2}{N}}|v|^{f(r)}} \le \mathrm{e}^{\frac{3\beta}{2}|v|^{\frac{N+2}{N}}}, 
\end{equation*}
for either  $r\in [\rho,1)$ or $r\in (0, \hat{\rho}]$. Since from \cite{JJ2012} we have $\exp\big(\mu \vert v\vert^{{(N+2)}/{N}}\big)\in L^{1}_{N-1}(0,1)$, for all $\mu>0$, the above estimate yields  $h\in L^{1}(0,\hat{\rho})$ and $h\in L^{1}(\rho, 1)$. 

\bigskip 

Now we prove $(ii)$, that is, $\mathcal{MT}_{N}(f;\beta)$ is finite for any $\beta\le \mu_N$.
Using estimate \eqref{r-estimate}, there exist $\rho, D >0$ (independent of $v$) such that 
\begin{equation}\label{a0log}
\begin{aligned}
|v(r)|\le (-\frac{N}{\mu_N}\ln r)^{\frac{N}{N+2}} \leq D (1-r)^{\frac{N}{N+2}} \leq 1, \quad \forall  r\in (\rho,1].
\end{aligned}
\end{equation}
Thus
\begin{equation*}
    |v(r)|^{f(r)} \leq 1, \quad \forall r \in [\rho,1],
\end{equation*}
which implies 
\begin{equation*}
        I_1 = \,  \int_{\rho}^{1} r^{N-1}\mathrm{e}^{\beta|v|^{\frac{N+2}{N}+f(r)}}\, \ud r \leq
        \int_{0}^{1} r^{N-1}\mathrm{e}^{\mu_N|v|^{\frac{N+2}{N}}}\, \ud r \leq \mathcal{MT}_{N}.
\end{equation*}

Now we will analyze the integral near the origin $r=0$. From $f(0)=0$ and  \eqref{limsup} we have $\frac{N}{N+2}f(r)\ln(-\frac{N}{\mu_{N}}\ln r)\to 0$ as $r\to 0$. Thus, \eqref{r-estimate} and the series expansion $\mathrm{e}^{x}=1+x+o(x)$, as $x\rightarrow 0$ yield 
\begin{equation}\nonumber
    \begin{aligned}
   |v|^{f(r)}&\leq \left[\left(-\frac{N}{\mu_{N}}\ln r\right)^{\frac{N}{N+2}}\right]^{f(r)}\\
    &=\mathrm{e}^{\frac{N}{N+2}f(r)\ln(-\frac{N}{\mu_{N}}\ln r)}\\
    &=1+\frac{N}{N+2}f(r)\ln(-\frac{N}{\mu_{N}}\ln r)+ o\big(f(r)\ln(-\frac{N}{\mu_{N}}\ln r)\big)
    \quad  \text{as} \quad  r \rightarrow 0 .
\end{aligned}
\end{equation}
For $\hat{\rho}>0$ small enough, we have $-N\ln r\ge 1 $, for $r\in (0,\hat{\rho})$. Thus,  from  \eqref{r-estimate} we have
\begin{multline}\label{KR}
    {\mu_{N}|v|^{\frac{N+2}{N}}\big(\big(|v|^{f(r)}-1\big)}\leq \\
    (-N \ln r)\Big[\frac{N}{N+2}f(r)\ln(-\frac{N}{\mu_{N}}\ln r)+ o\big(f(r)\ln(-\frac{N}{\mu_{N}}\ln r)\big)\Big]
    \quad  \text{as} \quad  r \rightarrow 0.
\end{multline}
Hence, from \eqref{KR} and  \eqref{limsup}, there are $C,\hat{\rho}>0$ (independent of $v$) such that 
\begin{equation*}
    \mathrm{e}^{\mu_{N}|v|^{\frac{N+2}{N}}\big(|v|^{f(r)}-1\big)}\le C, \quad \forall r  \in (0,\hat{\rho}].
\end{equation*}
We obtain
\begin{equation}\nonumber
\begin{aligned}
 I_2\le \int_{0}^{\hat{\rho}} r^{N-1}\mathrm{e}^{\mu_{N}|v|^{\frac{N+2}{N}+f(r)}}\ud r & =\int_{0}^{\hat{\rho}} r^{N-1}\mathrm{e}^{\mu_{N}|v|^{\frac{N+2}{N}}}\Big[\mathrm{e}^{\mu_{N}|v|^{\frac{N+2}{N}}\big( |v|^{f(r)}-1\big)}\Big]\ud r\\
 & \le C\int_{0}^{\hat{\rho}} r^{N-1}\mathrm{e}^{\mu_{N}|v|^{\frac{N+2}{N}}}\ud r\\
 &\le C \mathcal{MT}_{N}.
\end{aligned}
\end{equation}
Finally,  \eqref{r-estimate} and the continuity of the function $f$ on $[\hat{\rho}, \rho] $ allow us to estimate $I_3$.

In order to complete the proof of Proposition~\ref{prop1}, we only need to show that the threshold $\mu_{N}$ in \eqref{supii} is sharp. To see this, we will use the Moser's  functions
\begin{equation}\label{Moser-sequence}
 w_j(r)=\frac{1}{\mu^{\frac{N}{N+2}}_N}\left\{\begin{aligned}
 & j^{\frac{N}{N+2}}, &\;\;\mbox{if}\;\;& 0< r\le \mathrm{e}^{-\frac{j}{N}}\\
  & {j}^{-\frac{2}{N+2}}N\ln\left(\frac{1}{r}\right), &\;\;\mbox{if}\;\;& \mathrm{e}^{-\frac{j}{N}}\le r\le 1.
 \end{aligned}\right.
 \end{equation}
It is easy to check that
 $\|w_{j}\|_{X_1}=1$, with $N=2k$.  Also, since $f\ge0$ and $\vert w_j\vert\ge 1$  on $(0, \mathrm{e}^{-j/N})$, for $j$ sufficiently large, if $\beta>\mu_{N}$ we have
 \begin{equation}\nonumber
 \begin{aligned}
 & \int_{0}^{1} r^{N-1}\mathrm{e}^{\beta\vert w_j\vert^{\frac{N+2}{N}+f(r)}}\ud r \ge  \int_{0}^{\mathrm{e}^{-j/N}} r^{N-1}\mathrm{e}^{\beta\vert w_j\vert^{\frac{N+2}{N}}}\ud r
= \frac{\mathrm{e}^{j(\beta\mu^{-1}_N-1)}}{N}\rightarrow+\infty \quad \text{as} \quad j\rightarrow\infty.
 \end{aligned}
 \end{equation}
\end{proof}
\begin{proof}[Proof Theorem~\ref{thm1}:]
Follows from \eqref{NormX}, \eqref{FX2}  and Proposition~\ref{prop1}.
\end{proof}
 
\section{Existence of extremals for auxiliary problem}
\label{sec3}
We shall analyze the attainability of the supremum $\mathcal{MT}_{N}(f)$. Following \cite{CC,FOR,MZ2020} we say that $(v_j)\subset X_{1}$ is a normalized concentrating sequence at origin, NCS for short,   if 
\begin{equation}\label{def-concentra}
\|v_j\|_{X_1}=1,\;\; v_j\rightharpoonup 0\;\;\mbox{weakly in}\;\; X_1 \;\;\mbox{and}\;\; \lim_{j\rightarrow 0}\int_{r_0}^{1}r^{N-k}|v^{\prime}_j|^{k+1}dr=0,\;\forall r_0>0.
\end{equation}
Let us denote by $\Upsilon_0$ the set of normalized concentrating sequences at the origin and define the concentration level by
\begin{equation}
J=\sup\Big\{\limsup_{j\rightarrow\infty}\int_{0}^{1}r^{N-1}\mathrm{e}^{\mu_{N}|v_j|^{\frac{N+2}{N}}}dr\;:\; (v_j)\in  \Upsilon_0 \Big\}.
\end{equation}
Next, we will obtain an upper bound for the concentrating level  $J$, which relies on the following technical lemma proved in \cite[Lemma~2.3]{JJ2012} (see also \cite[Lemma~2]{CC}).
\begin{lemma}\label{sharp-estimate} Let $p\ge2$, $a>0$ and $\delta>0$ be real numbers. For each $w$ non-negative
$C^{1}$ function on $[0, \infty)$ satisfying $\int_{a}^{\infty}|w^{\prime}(t)|^{p}\mathrm{d}t \le \delta$ we have
$$
\int_{a}^{\infty}\mathrm{e}^{w^{q}(t)-t}\mathrm{d}t\le \mathrm{e}^{w^{q}(a)-a}\frac{1}{1-\delta^{1/(p-1)}}\exp\bigg(\Big(\frac{p-1}{p}\Big)^{p-1}\frac{c^{p}\gamma_{p}}{p}+\Psi(p)+\gamma\bigg)
$$
where $\gamma_{p}=\delta(1-\delta^{1/(p-1)})^{1-p}$, $c=qw^{q-1}(a)$, $q=p/(p-1)$, $\Psi(x)={\Gamma}^{\prime}(x)/\Gamma(x)$ with $\Gamma(x)=\int_{0}^{1}(-\ln t)^{x-1}dt$, $x>0$ is the digamma function and $\gamma=-\Psi(1)$ is the  Euler-Mascheroni constant.
\end{lemma}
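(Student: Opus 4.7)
The plan is to combine a pointwise bound for $w$ coming from its $L^p$-derivative control with a careful algebraic splitting of $w^q$, and then to evaluate the resulting one-dimensional integral with enough precision to see the constant $\Psi(p)+\gamma$.

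First, I would derive the pointwise estimate
$$
w(t) \le w(a)+\delta^{1/p}(t-a)^{1/q},\qquad t\ge a,
$$
from $w(t)=w(a)+\int_a^t w'(s)\,ds$, the hypothesis $w\ge 0$, and H\"older's inequality applied with conjugate exponents $p,q$. Writing $A=w(a)$ and $B=\delta^{1/p}(t-a)^{1/q}$, this gives $w(t)^q\le (A+B)^q$ and, importantly, $B^q=\delta^{1/(p-1)}(t-a)$.

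Second, I would expand $(A+B)^q$ by a scaled Young inequality, tuning the scale parameter so that after subtracting $t-a$ the coefficient of $(t-a)$ in the exponent becomes exactly $-(1-\delta^{1/(p-1)})$. Substituting and then rescaling $\tau=(1-\delta^{1/(p-1)})(t-a)$ would produce the prefactor $1/(1-\delta^{1/(p-1)})$ of the conclusion and leave an integral of the form
$$
\int_0^\infty e^{c'\tau^{1/q}-\tau}\,d\tau,\qquad c'=c\,\delta^{1/p}(1-\delta^{1/(p-1)})^{-1/q}\cdot(\text{Young prefactor}),
$$
with $c=qw^{q-1}(a)$. The specific Young constants must be adjusted so that the coefficient of $w^q(a)=A^q$ stays equal to $1$ (rather than $1+O(\delta)$), forcing the admissibility condition $\delta^{1/(p-1)}<1$.

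Third, I would evaluate the remaining one-dimensional integral by placing the saddle of $c'\tau^{1/q}-\tau$ at $\tau_*=(c'/q)^p$, where the maximum value is $((p-1)/p)^{p-1}(c')^p/p$. Using $c'^{p}=q^p w^q(a)\cdot(1-\delta^{1/(p-1)})^{-(p-1)}\cdot\delta$ and the identity $((p-1)/p)^{p-1}q^p/p=1/(p-1)$, this maximum matches $((p-1)/p)^{p-1}c^p\gamma_p/p$ with $\gamma_p=\delta(1-\delta^{1/(p-1)})^{1-p}$. The residual $\int_0^\infty e^{\phi(\tau)-\phi(\tau_*)}\,d\tau$ is then computed exactly, not asymptotically, by the change of variable $\tau=-\ln t$ which brings in $\Gamma(p)=\int_0^1(-\ln t)^{p-1}\,dt$ and its logarithmic derivative $\Psi(p)=\Gamma'(p)/\Gamma(p)$, producing the universal factor $e^{\Psi(p)+\gamma}$.

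The main obstacle will be arranging Steps~2 and 3 compatibly: a crude Laplace asymptotic yields the correct exponential $\exp(((p-1)/p)^{p-1}c^p\gamma_p/p)$ but only up to a multiplicative remainder that grows with $c$, whereas the lemma requires the remainder to be the universal constant $e^{\Psi(p)+\gamma}$. Obtaining the sharp constant forces an exact (rather than asymptotic) integration of the residual, which in turn dictates the precise Young scaling in Step~2. This combinatorial matching of constants---so that the rescaled integrand lands exactly on a Gamma-function representation---is the technical heart of the estimate.
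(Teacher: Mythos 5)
The paper does not prove this lemma itself; it cites it as \cite[Lemma~2.3]{JJ2012} (see also \cite[Lemma~2]{CC}), so there is no in-paper argument to compare against. Evaluating your proposal on its own terms, however, there is a genuine gap at Step~3 that cannot be fixed by the exact integration you describe.

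Steps 1 and 2 are fine: from $w(t)\le w(a)+\delta^{1/p}(t-a)^{1/q}=A+B$ and the subadditivity-type expansion $(A+B)^q\le A^q+qA^{q-1}B+B^q$ (valid for $1<q\le 2$, i.e.\ $p\ge2$), one gets, with $\beta=1-\delta^{1/(p-1)}$ and after the rescaling $\sigma=\beta(t-a)$,
\[
\int_a^\infty e^{w^q-t}\,dt\le \frac{e^{w^q(a)-a}}{\beta}\int_0^\infty e^{c'\sigma^{1/q}-\sigma}\,d\sigma,\qquad c'=c\,\delta^{1/p}\beta^{-1/q},
\]
and indeed $(c')^p=c^p\gamma_p$ and the maximum of $c'\sigma^{1/q}-\sigma$ is $M:=\big(\tfrac{p-1}{p}\big)^{p-1}\tfrac{(c')^p}{p}$, matching the lemma's exponential. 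So far so good.

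The problem is the claim that the residual $\int_0^\infty e^{c'\sigma^{1/q}-\sigma-M}\,d\sigma$ can be computed exactly and equals the universal constant $e^{\Psi(p)+\gamma}$. It cannot: this residual is a nontrivial function of $c'$ and is \emph{unbounded} as $c'\to\infty$. A Laplace expansion at the saddle $\sigma_*=(c'/q)^p$ gives $\int_0^\infty e^{c'\sigma^{1/q}-\sigma-M}\,d\sigma\sim \sqrt{2\pi/|g''(\sigma_*)|}\sim \text{const}\cdot(c')^{p/2}$, which exceeds $e^{\Psi(p)+\gamma}$ for $c'$ large. For $p=q=2$ the residual can be evaluated in closed form as $1+c'e^{(c')^2/4}\int_{-c'/2}^\infty e^{-y^2}\,dy$, which exceeds $e=e^{\Psi(2)+\gamma}$ as soon as $c'\gtrsim e/\sqrt{\pi}$. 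The change of variable $\tau=-\ln t$ does not rescue this: it turns the integral into $\int_0^1 e^{c'(-\ln t)^{1/q}}\,dt=\sum_{n\ge 0}\frac{(c')^n}{n!}\Gamma(1+n/q)$, a Mittag--Leffler-type series, not $\Gamma(p)$ or anything that collapses to $e^{M+\Psi(p)+\gamma}$.

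In short, the pointwise bound $w(t)\le w(a)+\delta^{1/p}(t-a)^{1/q}$ is never simultaneously saturated for all $t$ (the finiteness of $\int_a^\infty|w'|^p$ forces $w'$ to decay), and plugging it into the integrand and exponentiating the saddle value discards exactly the cancellation that makes the lemma true with a $c'$-\emph{independent} constant. You correctly flag this mismatch as ``the technical heart of the estimate,'' but your proposed resolution---an exact rather than asymptotic evaluation of $\int_0^\infty e^{c'\sigma^{1/q}-\sigma}\,d\sigma$---does not produce the universal constant; the actual proof in \cite{JJ2012} must exploit the fact that the Hölder constant on $[t_0,\infty)$, namely $\big(\int_{t_0}^\infty|w'|^p\big)^{1/p}$, decreases to zero, and cannot be carried out from the crude global pointwise bound alone.
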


Using Lemma~\ref{sharp-estimate} we can obtain the following estimate 
\begin{lemma}\label{S-lemma} We have
\begin{equation}\label{b-c} 
J \le \frac{1}{N}\Big(1+\mathrm{e}^{\Psi(\frac{N}{2}+1)+\gamma}\Big).
\end{equation}
\end{lemma}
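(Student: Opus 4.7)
The plan is to reduce the radial $2$-dimensional integral to a $1$-dimensional one by the change of variable $\tau=-N\ln r$, setting $\phi_j(\tau)=\mu_N^{N/(N+2)}v_j(\mathrm{e}^{-\tau/N})$. A direct calculation yields $\phi_j(0)=0$, $\int_0^\infty|\phi_j'|^p\,\mathrm{d}\tau=\|v_j\|_{X_1}^{k+1}=1$ with $p=k+1=\tfrac{N}{2}+1$, and
\begin{equation*}
\int_0^1 r^{N-1}\mathrm{e}^{\mu_N|v_j|^{(N+2)/N}}\,\mathrm{d}r=\frac{1}{N}\int_0^\infty \mathrm{e}^{\phi_j^q(\tau)-\tau}\,\mathrm{d}\tau,\qquad q=\frac{p}{p-1}=\frac{N+2}{N}.
\end{equation*}
The concentration property \eqref{def-concentra} becomes $\int_0^T|\phi_j'|^p\,\mathrm{d}\tau\to 0$ for every fixed $T>0$, which together with H\"older's inequality forces $\phi_j\to 0$ uniformly on every compact subset of $[0,\infty)$.

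Next, for each small $\delta\in(0,1)$, I would pick $T_j=T_j(\delta)>0$ so that $\int_{T_j}^\infty|\phi_j'|^p\,\mathrm{d}\tau=\delta$, observing that $T_j\to\infty$ as $j\to\infty$. Splitting $\int_0^\infty=\int_0^{T_j}+\int_{T_j}^\infty$, the outer piece is controlled by comparing with $\int_0^{T_0}\mathrm{e}^{-\tau}\,\mathrm{d}\tau$ on a fixed compact $[0,T_0]$ via the uniform convergence $\phi_j\to 0$, and on $[T_0,T_j]$ by the shifted H\"older estimate $\phi_j(\tau)\le \phi_j(T_0)+(\tau-T_0)^{1/q}(1-\delta)^{1/p}$; letting $j\to\infty$ and then $T_0\to\infty$ yields $\limsup_j\int_0^{T_j}\mathrm{e}^{\phi_j^q-\tau}\,\mathrm{d}\tau\le 1$.

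For the tail, I apply Lemma~\ref{sharp-estimate} with $w=\phi_j$ and $a=T_j$; combining the algebraic identity $\bigl(\tfrac{p-1}{p}\bigr)^{p-1}\tfrac{c^p}{p}=\tfrac{w^q(a)}{p-1}$ for $c=qw^{q-1}(a)$ with the H\"older bound $\phi_j^q(T_j)\le T_j(1-\delta)^{1/(p-1)}$ leads to
\begin{equation*}
\int_{T_j}^\infty \mathrm{e}^{\phi_j^q-\tau}\,\mathrm{d}\tau\le \frac{\mathrm{e}^{\Psi(p)+\gamma}}{1-\delta^{1/(p-1)}}\,\mathrm{e}^{T_j\,\kappa(\delta)},\quad \kappa(\delta):=(1-\delta)^{1/(p-1)}\Bigl(1+\frac{\gamma_p}{p-1}\Bigr)-1.
\end{equation*}
A short calculation gives $\kappa\equiv 0$ when $p=2$ and $\kappa(\delta)=O(\delta^{p/(p-1)})$ as $\delta\to 0^+$ for general $p\ge 2$. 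Since $\kappa(0)=0$, for each fixed $j$ the product $T_j(\delta)\kappa(\delta)\to 0$ as $\delta\to 0^+$, so a diagonal choice $\delta=\delta_j\to 0$ yields simultaneously $T_j(\delta_j)\kappa(\delta_j)\to 0$ and $\delta_j^{1/(p-1)}\to 0$, hence $\limsup_j\int_{T_j}^\infty \mathrm{e}^{\phi_j^q-\tau}\,\mathrm{d}\tau\le \mathrm{e}^{\Psi(p)+\gamma}$. Summing the two contributions and dividing by $N$ yields \eqref{b-c} with $p=\tfrac{N}{2}+1$.

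The principal obstacle is precisely the coupling of the limits $j\to\infty$ and $\delta\to 0$ in the tail bound: for $p>2$, $\kappa(\delta)$ is strictly positive for small $\delta>0$, so the naive two-step limit (first $j\to\infty$ with $\delta$ fixed, then $\delta\to 0$) diverges because the H\"older-type bound $\phi_j^q(T_j)\le T_j(1-\delta)^{1/(p-1)}$ is sharp on Moser-type sequences; the argument must therefore extract a diagonal subsequence $\delta_j\to 0$ adapted to the specific concentration rate of the given NCS. For $p=2$ (the Carleson--Chang case) the identity $\kappa\equiv 0$ eliminates this difficulty altogether.
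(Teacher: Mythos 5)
Your change of variables, the reduction to Lemma~\ref{sharp-estimate}, the identity $\bigl(\tfrac{p-1}{p}\bigr)^{p-1}\tfrac{c^p}{p}=\tfrac{w^q(a)}{p-1}$ for $c=qw^{q-1}(a)$, and the expansion $\kappa(\delta)\sim\delta^{p/(p-1)}$ with $\kappa\equiv 0$ at $p=2$ are all correct and mirror the paper's setup. The gap is the sentence ``for each fixed $j$ the product $T_j(\delta)\kappa(\delta)\to 0$ as $\delta\to 0^+$.'' Nothing in the definition of $X_1$ or of a NCS constrains the tail $\int_T^\infty|\phi_j'|^p\,\mathrm{d}\tau$ to decay at any particular rate as $T\to\infty$; it can vanish arbitrarily slowly, so $T_j(\delta)$ can grow much faster than $\delta^{-p/(p-1)}$ as $\delta\to 0^+$ and the product $T_j(\delta)\kappa(\delta)$ can diverge for each fixed $j$. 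Consequently the diagonal sequence $\delta_j\to 0$ with $T_j(\delta_j)\kappa(\delta_j)\to 0$ that you invoke need not exist, and your own closing remark correctly identifies this as the unresolved ``principal obstacle'' rather than a detail.

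The paper closes this gap by reversing the logic: it never prescribes $\delta$. Instead $a_j=-N\ln r_j$ is defined implicitly by \eqref{r-solution}, i.e.\ $\phi_j^q(a_j)=a_j-2\ln^+a_j$, with $r_j$ the largest solution in $(0,\mathrm{e}^{-1/N}]$, and the degenerate case in which no such $r_j$ exists is handled separately by showing that then $\limsup_j\int_0^1\le 2/N$. With this adaptive choice, the H\"older bound $\phi_j^q(a_j)\le a_j(1-\delta_j)^{2/N}$ combined with \eqref{r-solution} forces $\delta_j\le N\ln^+a_j/a_j\to 0$, a rate strong enough to kill the exponent: since $\kappa(\delta_j)=O(\delta_j^{(N+2)/N})$ one gets $a_j\kappa(\delta_j)=O\bigl(a_j^{-2/N}(\ln a_j)^{(N+2)/N}\bigr)\to 0$, which (up to a bounded prefactor) is precisely the quantity $K_j$ whose vanishing the paper verifies. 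In other words, \eqref{r-solution} manufactures a $\delta_j$ small enough relative to $a_j$, rather than hoping a favourable diagonal exists. The same choice also simplifies your outer piece: by maximality of $r_j$, $\mu_N|v_j(r)|^{(N+2)/N}\le -N\ln r-2\ln^+(-N\ln r)$ on all of $[r_j,1]$, and $\int_{r_j}^1\le 1/N$ follows by direct integration with no auxiliary cutoff $T_0\to\infty$. If you replace your $T_j(\delta)$ by the $a_j$ from \eqref{r-solution}, your framework goes through and the diagonalization issue disappears.
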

\begin{proof}
Let $(v_j)\in  \Upsilon_0$. Since $\Psi(\frac{N}{2}+1)+\gamma >0$, if 
\begin{equation*}
    \limsup_{j\rightarrow\infty}\int_{0}^{1}r^{N-1}\mathrm{e}^{\mu_{N}|v_j|^{\frac{N+2}{N}}}\mathrm{d}r \leq \frac{2}{N},
\end{equation*}
then the inequality \eqref{b-c}  holds. Thus,  without loss of generality, 
we can assume that $(v_j)\in  \Upsilon_0$ is such that 
\begin{equation}\label{hc}
\limsup_{j\rightarrow\infty}\int_{0}^{1}r^{N-1}\mathrm{e}^{\mu_{N}|v_j|^{\frac{N+2}{N}}}\mathrm{d}r> \frac{2}{N}.
\end{equation}
\par
We claim that for each $j\in\mathbb{N}$ there is a largest number $r_j\in (0,\mathrm{e}^{-1/N}]$  such that 
\begin{equation}\label{r-solution}
\mu_N|v_{j}(r_j)|^{\frac{N+2}{N}}=-N\ln r_{j}-2\ln^{+}\left(-N\ln r_{j}\right),
\end{equation}
where $f^{+}=\max\left\{f,0\right\}$.  In fact, from \eqref{r-estimate}, we have
\begin{equation}\nonumber
\mu_{N}|v_{j}(r)|^{\frac{N+2}{N}}\le -N\ln r=-N\ln r-2\ln^{+}\left(-N\ln r\right),\;\;\; r\ge \mathrm{e}^{-1/N}.
\end{equation}
By contradiction, suppose that there is no number $r_j$ satisfying \eqref{r-solution}, then
\begin{equation}\nonumber
\mu_{N}|v_{j}(r)|^{\frac{N+2}{N}} < -N\ln r-2\ln^{+}\left(-N\ln r\right),\;\;\; r\in (0,1).
\end{equation}
Note that
\begin{eqnarray}
\int_{0}^{1}r^{N-1}\mathrm{e}^{\mu_{N}|v_j|^{\frac{N+2}{N}}}\mathrm{d}r&\le & \int_{0}^{\mathrm{e}^{-1/N}}r^{N-1}\mathrm{e}^{-N\ln r-2\ln^{+}\left(-N\ln r\right)}\mathrm{d}r\nonumber
+\int_{\mathrm{e}^{-1/N}}^{1}r^{N-1}\mathrm{e}^{-N\ln r}\mathrm{d}r\nonumber\\
&\le & \frac{1}{N}\left(\int_{1}^{\infty}\frac{1}{t^2}\mathrm{d}t+1\right)\nonumber\\
&=&\frac{2}{N},\nonumber
\end{eqnarray}
which contradicts \eqref{hc}. Therefore, there exists a sequence  $(r_j)$ satisfying \eqref{r-solution}. We claim that 
\begin{equation}\label{limitrn}
\lim_{j\rightarrow\infty}r_{j}=0.
\end{equation}
This follows by the concentration assumption on $(v_j)$. Indeed, using \eqref{def-concentra}, given $\epsilon>0$ there exists $j_0=j_0(\epsilon)$ such that 
$$
\left(c_{N}\int_{\epsilon}^{1}r^{N-k}|v^{\prime}_j|^{k+1}\mathrm{d}r\right)^{\frac{2}{N}}<\frac {\mathrm{e}-2} {\mathrm{e}},\;\;\; j\ge j_0.
$$
Thus, by the H\"{o}lder inequality we have for any $r\in (\epsilon, 1)$ and $j\ge j_0$
\begin{eqnarray*}
\mu_{N}|v_{j}(r)|^{\frac{N+2}{N}}&\le & \left(c_{N}\int_{\epsilon}^{1}r^{N-k}|v^{\prime}_{j}|^{k+1}\mathrm{d}r\right)^{\frac{2}{N}}(-N\ln r)\\
&<& \frac {\mathrm{e-2}}{\mathrm{e}}\,(-N \ln r)\\
&\le & -N\ln r-2\ln^{+}\Big(-N\ln r\Big),
\end{eqnarray*}
which implies that $r_j<\epsilon$ for $j\ge j_0,$ and consequently \eqref{limitrn} holds. \\
\par \smallskip
Performing the change of variables
$$
t=-N\ln r, \;\;\; w_{j}(t)=c_{N}^{\frac{1}{k+1}}N^{\frac{N}{N+2}}v_{j}(r)\;\;\;\mbox{and}\;\;\; a_{j}=-N\ln r_j,
$$
we can write
$$
c_{N}\int_{0}^{r_j}r^{N-k}|v^{\prime}_{j}|^{k+1}\mathrm{d}r=\int_{a_{j}}^{\infty}|w^{\prime}_{j}(t)|^{k+1}\mathrm{d}t.
$$
We are in a position to apply Lemma~\ref{sharp-estimate} with
$$
w=w_j,\;\; a=a_j\;\;\; \mbox{and}\;\;\; \delta=\delta_{j}= \int_{a_{j}}^{\infty}|w^{\prime}_{j}(t)|^{k+1}\mathrm{d}t
$$
to obtain
\begin{equation}\label{concentration-boundeness}
\int_{0}^{r_j}r^{N-1}\mathrm{e}^{\mu_{N}|v_{j}|^{\frac{N+2}{N}}}\ud r=\frac{1}{N}\int_{a_j}^{\infty}\mathrm{e}^{|w_{j}(t)|^{\frac{N+2}{N}}-t}\mathrm{d}t\le \frac{1}{N}\frac{\mathrm{e}^{K_{j}+\Psi(k+1)+\gamma}}{1-\delta^{\frac{2}{N}}_{j}} 
\end{equation}
where
$$
K_{j}=w^{\frac{N+2}{N}}_j(a_j)\bigg[1+ \frac{\delta_{j}}{\frac{N}{2}(1-\delta^{\frac{2}{N}}_{j})^{\frac{N}{2}}}\bigg]- a_j\ .
$$

We claim that
\begin{equation}\label{blabli}
    \delta_{j}\rightarrow 0\;\;\; \mbox{and}\;\;\; K_{j}\rightarrow 0,\;\;\;\mbox{as}\;\;\; j\rightarrow\infty.
\end{equation}
In fact, by the same way as in \eqref{r-estimate}, we have 
\begin{eqnarray*}
|w_{j}(a_j)|^{\frac{N+2}{N}}=\mu_{N}|v_{j}(r_j)|^{\frac{N+2}{N}}
&\le & \left(c_{N}\int_{r_j}^{1}s^{N-k}|v^{\prime}_{j}(s)|^{k+1}\mathrm{d}s\right)^{\frac{2}{N}}(-N\ln r_j)\\
&\le & (1-\delta_{j})^{\frac{2}{N}}a_j.
\end{eqnarray*}
We recall that $1- t^d \le d(1-t)$ for $t\ge 0 $ and  $d \geq 1$. 
Hence, by using the above inequality, \eqref{r-solution} and \eqref{limitrn} we obtain
$$\delta_{j}\le 1-\left(1-\frac{2\ln^{+}a_j}{a_j}\right)^{\frac{N}{2}}\le N\frac{\ln^{+}a_j}{a_j}\rightarrow 0,$$ as $j\rightarrow \infty.$
 In addition, note that 
$$K_j=a_j(1-\delta^{\frac{2}{N}}_{j})^{-\frac{N}{2}}G(a_j),$$
where
$$
G(a_j)=\frac{2\delta_{j}}{N}-\frac{2\delta_{j}}{N}\frac{2\ln^{+}a_j}{a_j}-\frac{2\ln^{+}a_j}{a_j}(1-\delta^{\frac{2}{N}}_{j})^{\frac{N}{2}}.
$$
Since $\delta_{j}\le N\frac{\ln^{+}a_j}{a_j}$, by the inequality $1- t^d \le d(1-t)$ for $t\ge 0 $ and  $d \geq 1$ again we conclude that
\begin{eqnarray*}
G(a_j)&\le &  \frac{2\ln^{+}a_j}{a_j}\left[ 1-\left(1-\delta_{j}^{\frac{2}{N}}\right)^{\frac{N}{2}}-\frac{2\delta_{j}}{N}\right]\\
&\le& \frac{2\ln^{+}a_j}{a_j}\left[\frac{N}{2}\delta_{j}^{\frac{2}{N}}-\frac{2\delta_{j}}{N}\right]\\
&\le& \frac{2\ln^{+}a_j}{a_j}\left(\frac{N}{2}\delta_{j}^{\frac{2}{N}}\right)\\
&\le&\left (\frac{N}{2}\right)^{\frac{N+2}{N}}\left(\frac{2\ln^{+}a_j}{a_j}\right)^{\frac{N+2}{N}},
\end{eqnarray*}
which implies 
\begin{equation}\nonumber
K_j=\frac{a_j}{(1-\delta^{\frac{2}{N}}_{j})^{\frac{N}{2}}}G(a_j)\le \frac{\left (\frac{N}{2}\right)^{\frac{N+2}{N}}}{(1-\delta^{\frac{2}{N}}_{j})^{\frac{N}{2}}}a_j\left(\frac{2\ln^{+}a_j}{a_j}\right)^{\frac{N+2}{N}}
\end{equation}
and thus $K_j\rightarrow 0$, as $j\rightarrow\infty,$ and consequently \eqref{blabli} holds. 
\par \medskip
Hence, from \eqref{concentration-boundeness}
\begin{equation}\label{concentration-boundeness2}
\limsup_{j\rightarrow\infty}\int_{0}^{r_j}r^{N-1}\mathrm{e}^{\mu_{N}|v_{j}|^{\frac{N+2}{N}}}\mathrm{d}r\le \frac{1}{N}\mathrm{e}^{\Psi(k+1)+\gamma}.
\end{equation}
We recall that for any $r_0>0$
\begin{eqnarray*}
\mu_{N}|v_{j}(r)|^{\frac{N+2}{N}}
&\le & \left(c_{N}\int_{r_0}^{1}s^{N-k}|v^{\prime}_{j}(s)|^{k+1}\mathrm{d}s\right)^{\frac{2}{N}}(-N\ln{r_0}),\;\; \forall\; r\in [r_0,1].
\end{eqnarray*}
It follows from \eqref{def-concentra} that   $v_{j}\rightarrow 0$ uniformly on every compact interval $[r_0,1]\subset (0,1]$. In particular, given $\epsilon>0$ and $r_0\in (r_j, \mathrm{e}^{-1/N})$, there exists $j_0\in\mathbb{N}$ satisfying
$$
\mu_{N}|v_{j}(r)|^{\frac{N+2}{N}}
\le\epsilon,\;\;  \forall\; r\in [r_0,1] \;\;\;\mbox{and} \;\;\; j\ge j_0.
$$
Recalling that $r_j\in (0, \mathrm{e}^{-1/N}]$ is the largest number for which \eqref{r-solution} holds we get
\begin{eqnarray*}
\int_{r_j}^{1}r^{N-1}\mathrm{e}^{\mu_{N}|v_{j}|^{\frac{N+2}{N}}}\mathrm{d}r&=&\int_{r_j}^{r_0}r^{N-1}\mathrm{e}^{\mu_{N}|v_{j}|^{\frac{N+2}{N}}}\mathrm{d}r+\int_{r_0}^{1}r^{N-1}\mathrm{e}^{\mu_{N}|v_{j}|^{\frac{N+2}{N}}}\mathrm{d}r\\
&\le &\int_{r_j}^{r_0}r^{N-1}\mathrm{e}^{-N\ln r-2\ln^{+}\left(-N\ln r\right)}\mathrm{d}r+\mathrm{e}^{\epsilon}\int_{r_0}^{1}r^{N-1}\mathrm{d}r\\
&=& \frac{1}{N}\left[-\frac{1}{(-N\ln r_j)}+\frac{1}{(-N\ln r_0)}+\mathrm{e}^{\epsilon}-\mathrm{e}^{\epsilon}r^{N}_0\right]\\
&\le & \frac{1}{N}
\end{eqnarray*}
for $\epsilon$ and $r_0$ small enough. Thus,  combining with \eqref{concentration-boundeness2}, we have  that \eqref{b-c} holds.
\end{proof}
Next, we will show that 
\begin{equation}\nonumber
\mathcal{MT}_{N}(f)>\frac{1}{N}\Big(1+\mathrm{e}^{\gamma+\Psi\big(\frac{N}{2}+1\big)}\Big)
\end{equation}
for any continuous function $f:[0,1)\rightarrow[0,\infty)$ satisfying $f(0)=0$, $f>0$ on $(0,1)$ and the condition \eqref{limsup}. 
To obtain the above estimate we use the following family of functions: For $\epsilon>0$ small enough, let
\begin{equation}\label{sv_e}
v_{\epsilon}(r)=\left\{\begin{aligned}
& c_{\epsilon}-\frac{\frac{N}{2\mu_{N}}\ln\left(1+ a_{N}\left(\frac{r}{\epsilon}\right)^{2}\right)+b_{\epsilon}}{c^{\frac{2}{N}}_{\epsilon}}, & & r\le \epsilon L_{\epsilon}\\
&\frac{(-N\ln r)}{\mu_{N}c^{\frac{2}{N}}_{\epsilon}},  & & \epsilon L_{\epsilon}\le r\le 1, 
\end{aligned}\right.
\end{equation}
where $ a_{N}=\left(\frac{\omega_{N-1}}{N}\right)^{\frac{2}{N}}$, $L_{\epsilon}=-\ln\epsilon$ and $b_{\epsilon}, c_{\epsilon} $ are suitable constants to be chosen later such that $\|v_{\epsilon}\|_{X_1}=1$ and the function $v_{\epsilon}$ to be continuous, that is,
\begin{enumerate}
\item [(i)] $c_{\epsilon} \rightarrow\infty$, as $\epsilon\rightarrow 0$,
\item [(ii)]$ 2\mu_{N} c_{\epsilon}^{\frac{N+2}{N}}=
N\ln\left(1+ a_{N}L^{2}_{\epsilon}\right)+2\mu_{N} b_{\epsilon}-2N\ln({\epsilon L_{\epsilon}})$.
\end{enumerate}
Before we perform our analysis of the family $(v_{\epsilon})$, we need to introduce the following technical result.
\begin{lemma}\label{lema-tecnico} For any $z>0$ and $p\ge2$ we have 
\begin{equation}\label{LT-eq1}
\int_{0}^{z}\frac{s^{p-1}}{(1+s)^{p}}\mathrm{d}s=\ln(1+z)-[\gamma+\Psi(p)]+\int_{\frac{z}{1+z}}^{1}\frac{1-s^{p-1}}{1-s}\mathrm{d}s
\end{equation}
and
\begin{equation}\label{LT-eq2}
\int_{0}^{z}\frac{s^{p-2}}{(1+s)^{p}}\mathrm{d}s=p-1-\int_{z}^{\infty}\frac{1-s^{p-2}}{1-s}\mathrm{d}s.
\end{equation}
\end{lemma}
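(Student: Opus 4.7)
The strategy for both identities is a single change of variables $t = s/(1+s)$, which satisfies $1-t = (1+s)^{-1}$, $ds = (1-t)^{-2}\,dt$, and maps $[0,z]$ onto $[0,z/(1+z)]$. Under this substitution the algebraic weights simplify, and the resulting integrals can be compared with the classical integral representation of the digamma function,
\[
\gamma + \Psi(p) \;=\; \int_{0}^{1} \frac{1 - t^{p-1}}{1 - t}\, dt, \qquad p > 0,
\]
which I would treat as the principal external input.

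For \eqref{LT-eq1}, a direct computation gives $s^{p-1}(1+s)^{-p}\,ds = t^{p-1}(1-t)^{-1}\,dt$. I would split $t^{p-1}/(1-t) = (1-t)^{-1} - (1-t^{p-1})/(1-t)$ and integrate termwise on $[0, z/(1+z)]$. The first piece yields $-\ln\bigl(1 - z/(1+z)\bigr) = \ln(1+z)$. For the second piece, I would write
\[
\int_{0}^{z/(1+z)} \frac{1-t^{p-1}}{1-t}\, dt \;=\; \int_{0}^{1} \frac{1-t^{p-1}}{1-t}\, dt \;-\; \int_{z/(1+z)}^{1} \frac{1-t^{p-1}}{1-t}\, dt
\]
and invoke the digamma representation on the first summand, which is precisely $\gamma + \Psi(p)$. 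Collecting the three contributions gives the right-hand side of \eqref{LT-eq1} exactly.

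For \eqref{LT-eq2} the same substitution simplifies things further: $s^{p-2}(1+s)^{-p}\,ds = t^{p-2}\,dt$, and the antiderivative is elementary. Equivalently, this can be verified directly from the identity $\tfrac{d}{ds}\bigl[s^{p-1}/(1+s)^{p-1}\bigr] = (p-1)\,s^{p-2}/(1+s)^p$, so the left-hand side reduces to $(p-1)^{-1}(z/(1+z))^{p-1}$ in closed form. The task is then to recognize this closed form as the stated right-hand side. My plan is to apply a secondary change of variable to the tail integral $\int_{z}^{\infty}(1-s^{p-2})/(1-s)\,ds$ — sending $(z,\infty)$ to a bounded interval, e.g.\ via $u = s/(1+s)$ so that $(z,\infty)\mapsto (z/(1+z),1)$ — and to reassemble the two pieces using the digamma identity once more, so that the residual cancels against the constant $p-1$.

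The main difficulty I expect concerns \eqref{LT-eq2}: the integrand $(1-s^{p-2})/(1-s)$ behaves like $s^{p-3}$ at infinity, so for the relevant range $p = k+1 \geq 2$ one must track the cancellation structure carefully and interpret the improper integral in the correct sense (if necessary as a regularised or principal-value object). Once both identities are in hand, they feed directly into the expansion of the test family $(v_\varepsilon)$ defined in \eqref{sv_e}, allowing a precise asymptotic comparison of $\mathcal{MT}_{N}(f)$ with the concentration threshold $N^{-1}\bigl(1 + \mathrm{e}^{\gamma + \Psi(N/2+1)}\bigr)$ established in Lemma~\ref{S-lemma}, and hence ruling out concentration for the maximizing sequence.
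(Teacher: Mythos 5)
Your treatment of \eqref{LT-eq1} is correct and is essentially the paper's proof: the paper substitutes $\tau=1/(1+s)$ and then reflects $s\mapsto 1-\tau$, which is the same single change of variables $t=s/(1+s)$ you use, and both arguments rest on the Gauss formula $\gamma+\Psi(p)=\int_0^1(1-t^{p-1})/(1-t)\,dt$; the only cosmetic difference is that you cite this as classical while the paper rederives it from the Dirichlet integral representation \eqref{DPsi} of $\Psi$.

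Your plan for \eqref{LT-eq2}, however, cannot close, and your own computation already shows why. You correctly obtain the closed form
\[
\int_0^z \frac{s^{p-2}}{(1+s)^p}\,\mathrm{d}s \;=\; \frac{1}{p-1}\Big(\frac{z}{1+z}\Big)^{p-1},
\]
which tends to $\tfrac{1}{p-1}$ as $z\to\infty$. The right-hand side of \eqref{LT-eq2} would instead tend to $p-1$: for $p>2$ the tail integral $\int_z^\infty(1-s^{p-2})/(1-s)\,\mathrm{d}s$ diverges (integrand $\sim s^{p-3}$ with $p-3>-1$), and for $p=2$ the integrand vanishes identically so the right-hand side equals $1$ while the left-hand side is $z/(1+z)<1$. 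This is not a conditionally convergent integral awaiting the right regularisation; the identity is simply false as stated, and no principal-value reading can make $\tfrac{1}{p-1}$ equal $p-1$. The paper's own proof of \eqref{LT-eq2} consists of the single word ``Analogously,'' which glosses over the problem. What the paper actually uses downstream (e.g.\ the step giving $\int_0^{a_\epsilon}s^{N/2-1}(1+s)^{-N/2-1}\,\mathrm{d}s=\tfrac{2}{N}+O(1/a_\epsilon)$) is the asymptotic $\int_0^z s^{p-2}(1+s)^{-p}\,\mathrm{d}s=\tfrac{1}{p-1}+O(1/z)$, which your closed form delivers directly, together with \eqref{GPsi} giving $\int_0^\infty s^{p-2}(1+s)^{-p}\,\mathrm{d}s = \Gamma(p-1)\Gamma(1)/\Gamma(p)=\tfrac{1}{p-1}$. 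Keep your closed-form derivation and drop the attempt to match the displayed right-hand side of \eqref{LT-eq2}; the quantity the rest of the argument needs is the asymptotic, not the (erroneous) identity.
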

\begin{proof}
We recall (cf. \cite{special}) the following identities
\begin{equation}\label{GPsi}
\Gamma(1)=1, \;\;\Gamma(x+1)=x\Gamma(x)\;\; \mbox{and}\;\;  
\int_{0}^{\infty}\frac{s^{x-1}}{(1+s)^{x+y}}ds=\frac{\Gamma(x)\Gamma(y)}{\Gamma(x+y)},\;\; x, y>0
\end{equation}
and the Dirichlet's integral representation of $\Psi(x)$
\begin{equation}\label{DPsi}
\Psi(x)=\int_{0}^{\infty}\frac{1}{z}\left(1-\frac{1}{(1+z)^x}\right)\mathrm{d}z,\quad x>0.
\end{equation}
Since $\Psi(1)=-\gamma$, by using  \eqref{DPsi} and the change of variables $s=1/(1+z)$ it follows that
\begin{equation}\nonumber
\begin{aligned}
\Psi(p)+\gamma&=\int_{0}^{\infty}\frac{1}{z}\left[\left(1-\frac{1}{(1+z)^p}\right)-\left(1-\frac{1}{1+z}\right)\right]\mathrm{d}z\\
&=\int_{0}^{1}\frac{1-s^{p-1}}{1-s}\mathrm{d}s.
\end{aligned}
\end{equation}
Thus, setting $1/\tau =1+s$  we can write
\begin{equation}\nonumber
\begin{aligned}
\int_{0}^{z}\frac{s^{p-1}}{(1+s)^{p}}\mathrm{d}s
&= \int_{\frac{1}{1+z}}^{1}\left[\frac{1}{\tau}+\frac{1}{\tau}((1-\tau)^{p-1}-1)\right]\mathrm{d}\tau\\
&=\ln(1+z)-[\Psi(p)+\gamma]+\int_{\frac{z}{1+z}}^{1}\frac{1-s^{p-1}}{1-s}\mathrm{d}s
\end{aligned}
\end{equation}
which proves \eqref{LT-eq1}. Analogously, with help of \eqref{GPsi} we obtain \eqref{LT-eq2}. 
\end{proof}
\begin{lemma}  For $\epsilon>0$ small enough, we have   $v_{\epsilon}\in X_1$ with
\begin{equation}\label{gradient-full}
\begin{aligned}
\|v_{\epsilon}\|^{k+1}_{X_1}&=\frac{N}{2\mu_{N}c^{\frac{N+2}{N}}_{\epsilon}}\left[\ln(1+a_{\epsilon})-\Psi\Big(\frac{N}{2}+1\Big)-\gamma+O\left(\frac{1}{1+a_{\epsilon}}\right)-2\ln( \epsilon L_{\epsilon})\right],
\end{aligned}
\end{equation}
as $\epsilon\to 0$,
where $a_{\epsilon}=a_{N}L^{2}_{\epsilon}$. In particular, we can choose  $b_{\epsilon}$ and  $c_{\epsilon}$ , with $c_{\epsilon}$ of the form
\begin{equation}\label{c}
\begin{aligned}
\mu_{N}c^{\frac{N+2}{N}}_{\epsilon}=\frac{N}{2}\left[\ln(1+a_{\epsilon})-\Psi\Big(\frac{N}{2}+1\Big)-\gamma+O\left(\frac{1}{1+a_{\epsilon}}\right)-2\ln( \epsilon L_{\epsilon})\right]
\end{aligned}
\end{equation}
such that  $\|v_{\epsilon}\|_{X_1}=1$, $(i)$ and $(ii)$ hold.
\end{lemma}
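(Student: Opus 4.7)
The plan is to evaluate $\|v_\epsilon\|_{X_1}^{k+1} = c_N\int_0^1 r^{N-k}|v_\epsilon'|^{k+1}\,dr$ directly by splitting the integration at $r=\epsilon L_\epsilon$, using the definition \eqref{sv_e}. Differentiating the two branches gives
\[
v_\epsilon'(r) = \begin{cases} -\dfrac{Na_N r/\epsilon^2}{\mu_N c_\epsilon^{2/N}\bigl(1+a_N(r/\epsilon)^2\bigr)}, & 0<r\le \epsilon L_\epsilon,\\[4pt] -\dfrac{N}{r\,\mu_N c_\epsilon^{2/N}}, & \epsilon L_\epsilon\le r\le 1.\end{cases}
\]
Using $N=2k$ (so that $N-k=k$ and $2(k+1)=N+2$) together with the identity $c_N N^{k+1}/\mu_N^{k+1}=N/\mu_N$, which follows from $\mu_N=Nc_N^{2/N}$, the outer piece collapses to a pure logarithm,
\[
I_2 := c_N\int_{\epsilon L_\epsilon}^1 r^{N-k}|v_\epsilon'|^{k+1}\,dr = \frac{N}{\mu_N c_\epsilon^{(N+2)/N}}\int_{\epsilon L_\epsilon}^1 \frac{dr}{r} = \frac{-N\ln(\epsilon L_\epsilon)}{\mu_N c_\epsilon^{(N+2)/N}}.
\]

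For the inner piece I would perform the substitution $s=a_N(r/\epsilon)^2$, under which all powers of $\epsilon$ and $a_N$ cancel thanks to $N=2k$ and $k+1=(N+2)/2$, yielding
\[
I_1 := c_N\int_0^{\epsilon L_\epsilon} r^{N-k}|v_\epsilon'|^{k+1}\,dr = \frac{N}{2\mu_N c_\epsilon^{(N+2)/N}}\int_0^{a_\epsilon}\frac{s^{N/2}}{(1+s)^{k+1}}\,ds.
\]
With $p=k+1=N/2+1$ this is exactly the integral treated in \eqref{LT-eq1}, so Lemma~\ref{lema-tecnico} delivers
\[
\int_0^{a_\epsilon}\frac{s^{k}}{(1+s)^{k+1}}\,ds = \ln(1+a_\epsilon)-\Psi\bigl(\tfrac{N}{2}+1\bigr)-\gamma + R_\epsilon,
\]
where $R_\epsilon=\int_{a_\epsilon/(1+a_\epsilon)}^1 (1-s^k)/(1-s)\,ds$. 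Since the integrand equals $1+s+\cdots+s^{k-1}\le k$ on $[0,1]$ and the interval of integration has length $1/(1+a_\epsilon)$, one obtains $R_\epsilon=O(1/(1+a_\epsilon))$.

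Adding $I_1$ and $I_2$ reproduces exactly the asserted formula \eqref{gradient-full}. Imposing $\|v_\epsilon\|_{X_1}=1$ in that identity is precisely the implicit equation \eqref{c} defining $c_\epsilon$; inspecting its right-hand side shows $\mu_N c_\epsilon^{(N+2)/N}=-N\ln\epsilon+O(1)$ as $\epsilon\to 0$, which gives (i). Finally, $b_\epsilon$ is forced by (ii), which is nothing but the continuity of $v_\epsilon$ at $r=\epsilon L_\epsilon$, equating the two branches of \eqref{sv_e} at that point. The only delicate step is the algebraic bookkeeping that makes the powers of $\epsilon$, $a_N$, $c_N$, $N$, and $\mu_N$ telescope into the clean coefficient $N/(2\mu_N c_\epsilon^{(N+2)/N})$; all the analytic content sits in Lemma~\ref{lema-tecnico}.
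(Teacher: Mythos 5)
Your proof is correct and follows essentially the same route as the paper: differentiate both branches, split the integral at $r=\epsilon L_\epsilon$, reduce the inner piece to the Beta-type integral $\int_0^{a_\epsilon}s^{N/2}(1+s)^{-(N/2+1)}ds$ via the substitution $s=a_N(r/\epsilon)^2$ (using $N=2k$ and $\mu_N=Nc_N^{2/N}$ to collapse constants), invoke Lemma~\ref{lema-tecnico}, and compute the outer piece as a pure logarithm. The one small variation is how you control the remainder $R_\epsilon$: the paper invokes L'Hospital's rule to get $\lim_{z\to\infty}(1+z)\int_{z/(1+z)}^1\frac{1-s^{p-1}}{1-s}\,ds=p-1$, while you simply bound the integrand by $k$ over an interval of length $1/(1+a_\epsilon)$; your bound is more elementary and suffices. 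One minor imprecision: from \eqref{c} you actually get $\mu_N c_\epsilon^{(N+2)/N}=-N\ln\epsilon+O(\ln(-\ln\epsilon))$ rather than $-N\ln\epsilon+O(1)$, since $\ln(1+a_\epsilon)$ and $\ln L_\epsilon$ are unbounded, but this does not affect the conclusion that $c_\epsilon\to\infty$.
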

\begin{proof}
Since we are assuming $N=2k$, an straightforward calculation shows  
\begin{equation}\label{part1-gradient}
c_{N}\int_{0}^{\epsilon L_{\epsilon}}r^{N-k}|v^{\prime}_{\epsilon}|^{k+1}\mathrm{d}r= \frac{N}{2\mu_{N}c^{\frac{N+2}{N}}_{\epsilon}}\int_{0}^{a_N L^{2}_{\epsilon}}\frac{s^{\frac{N}{2}}}{(1+s)^{\frac{N}{2}+1}}\mathrm{d}s.
\end{equation}
The  L’Hospital's rule gives
\begin{equation}\nonumber
\lim_{z\rightarrow\infty}(1+z)\int_{\frac{z}{1+z}}^{1}\frac{1-s^{p-1}}{1-s}\mathrm{d}s=p-1.
\end{equation} 
Hence, from \eqref{LT-eq1}  and \eqref{part1-gradient} it follows that
\begin{equation}\label{part1-gradientbis}
c_{N}\int_{0}^{\epsilon L_{\epsilon}}r^{N-k}|v^{\prime}_{\epsilon}|^{k+1}\mathrm{d}r=\frac{N}{2\mu_{N}c^{\frac{N+2}{N}}_{\epsilon}}\left[\ln(1+a_{\epsilon})-\Psi\Big(\frac{N}{2}+1\Big)-\gamma+O\left(\frac{1}{1+a_{\epsilon}}\right)\right].
\end{equation}
 On the other hand
\begin{eqnarray}
c_{N}\int_{\epsilon L_{\epsilon}}^{1}r^{N-k}|v^{\prime}_{\epsilon}|^{k+1}\mathrm{d}r &=& c_{N}\left(\frac{N}{\mu_{N}c^{\frac{2}{N}}_{\epsilon}}\right)^{k+1}\int_{\epsilon L_{\epsilon}}^{1}r^{-1}\mathrm{d}r\nonumber \\
&=&- \frac{N}{\mu_{N}c^{\frac{N+2}{N}}_{\epsilon}}\ln({\epsilon L_{\epsilon}}).\label{part2-gradient}
\end{eqnarray}
Now, the result follows from \eqref{part1-gradientbis} and \eqref{part2-gradient}.
\end{proof}

Now, we are in position to prove the following:
\begin{lemma}  We have
\begin{equation}\label{lower-bound}
\mathcal{MT}_{N}(f)>\frac{1}{N}\Big(1+\mathrm{e}^{\gamma+\Psi\big(\frac{N}{2}+1\big)}\Big).
\end{equation}
\end{lemma}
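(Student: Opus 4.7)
The plan is to test the supremum against the Moser-type family $(v_\epsilon)$ from \eqref{sv_e}, normalized so that $\|v_\epsilon\|_{X_1}=1$ by the preceding lemma. Writing
\[
I_\epsilon := \int_0^1 r^{N-1}\, \mathrm{e}^{\mu_N |v_\epsilon|^{(N+2)/N + f(r)}}\, \mathrm{d}r,
\]
the bound \eqref{lower-bound} will follow once $I_\epsilon > \frac{1}{N}\bigl(1+\mathrm{e}^{\gamma+\Psi(N/2+1)}\bigr)$ is established for some small $\epsilon>0$. Split $I_\epsilon = I_\epsilon^{\mathrm{in}} + I_\epsilon^{\mathrm{out}}$ at the natural scale $r = \epsilon L_\epsilon$ and analyze each piece separately.

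For $I_\epsilon^{\mathrm{in}}$: substitute $r=\epsilon s$, use the explicit form $v_\epsilon(\epsilon s) = c_\epsilon - \bigl(\frac{N}{2\mu_N}\ln(1+a_N s^2)+b_\epsilon\bigr)/c_\epsilon^{2/N}$, and Taylor expand the power $(N+2)/N + f(\epsilon s)$ around $c_\epsilon^{(N+2)/N}$. Under \eqref{limsup} with $\sigma>1$, one checks that $f(\epsilon s)\ln v_\epsilon(\epsilon s) = O\bigl((-\ln\epsilon)^{-\sigma}\ln(-\ln\epsilon)\bigr)=o(1)$ uniformly on $s\in(0,L_\epsilon]$, so $v_\epsilon^{f(r)} = 1+o(1)$ on the inner region. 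The normalization condition $(ii)$ on $b_\epsilon, c_\epsilon$ combined with the factor $\epsilon^N$ coming from $r^{N-1}\mathrm{d}r$ then cancels the $\mu_N c_\epsilon^{(N+2)/N}$-scale contribution and reduces $I_\epsilon^{\mathrm{in}}$ to a Beta-type integral $\int_0^{a_N L_\epsilon^2}t^{N/2-1}(1+t)^{-N/2-1}\,\mathrm{d}t$ multiplied by prefactors that produce the $\mathrm{e}^{\gamma+\Psi(N/2+1)}$ factor; passing to the limit via \eqref{LT-eq2} of Lemma~\ref{lema-tecnico} (or equivalently the Beta integral in \eqref{GPsi}) yields $I_\epsilon^{\mathrm{in}} \to \frac{1}{N}\mathrm{e}^{\gamma+\Psi(N/2+1)}$. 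For $I_\epsilon^{\mathrm{out}}$: the identity $v_\epsilon(r) = -N\ln r/(\mu_N c_\epsilon^{2/N})$ together with $c_\epsilon \to \infty$ implies $v_\epsilon \to 0$ uniformly on compact subsets of $(0,1]$, so the integrand tends pointwise to $1$; dominated convergence combined with \eqref{r-estimate} to control the transitional piece near $\epsilon L_\epsilon$ gives $I_\epsilon^{\mathrm{out}} \to 1/N$. Summing, $\lim_{\epsilon\to 0} I_\epsilon = \frac{1}{N}\bigl(1+\mathrm{e}^{\gamma+\Psi(N/2+1)}\bigr)$.

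The limit only delivers $\geq$ in \eqref{lower-bound}; strict inequality must be extracted from the pre-limit. By convexity of $t\mapsto t^{(N+2)/N}$ (the exponent exceeds $1$), the first-order Taylor expansion strictly underestimates $v_\epsilon^{(N+2)/N}$, so the positive Taylor remainder $R_\epsilon(s)>0$ produces a pre-limit excess $E_\epsilon>0$ with $I_\epsilon(0) := I_\epsilon|_{f\equiv 0} = \frac{1}{N}\bigl(1+\mathrm{e}^{\gamma+\Psi(N/2+1)}\bigr) + E_\epsilon$. The $f$-perturbation shifts $I_\epsilon$ by $I_\epsilon - I_\epsilon(0) = \mathrm{err}_f(\epsilon)$, which under \eqref{limsup} with $\sigma>1$ is bounded by $O\bigl((-\ln\epsilon)^{1-\sigma}\ln(-\ln\epsilon)\bigr)$. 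The main obstacle is the quantitative comparison $|\mathrm{err}_f(\epsilon)| < E_\epsilon$: one must extract the precise leading order of $E_\epsilon$ (using the $O\bigl(1/(1+a_\epsilon)\bigr)$ correction from \eqref{gradient-full} and the positivity of $R_\epsilon$) and show it dominates the sign-changing contribution of $f$, in particular the loss on $\{v_\epsilon<1\}$ where $v_\epsilon^{f(r)}<1$. This delicate bookkeeping, controlled by the rate in \eqref{limsup}, delivers the strict inequality \eqref{lower-bound} for all sufficiently small $\epsilon$.
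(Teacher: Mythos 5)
Your overall scaffolding is right: you use the paper's Moser family $(v_\epsilon)$, split at $r=\epsilon L_\epsilon$, show $I_\epsilon\to \frac{1}{N}\bigl(1+\mathrm{e}^{\gamma+\Psi(N/2+1)}\bigr)$, and correctly recognize that the strict inequality must be extracted from the pre-limit. But your identification of the source of the pre-limit excess is not what the paper uses, and more importantly it is not actually established. You attribute $E_\epsilon>0$ to the convexity remainder in the inner-region Taylor expansion of $|v_\epsilon|^{(N+2)/N}$. That lower bound indeed uses $(1+t)^{(N+2)/N}\ge 1+\frac{N+2}{N}t$, but the same inner estimate \eqref{Sb-part1} simultaneously loses a term of order $O(1/a_\epsilon)$ from truncating the Beta integral at $a_\epsilon$ (this truncation is a \emph{negative} contribution, since $\int_0^{a_\epsilon}\frac{s^{N/2-1}}{(1+s)^{N/2+1}}\,\mathrm{d}s<\frac{2}{N}$). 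You have not shown your convexity remainder dominates this loss, so your claim $E_\epsilon>0$ is unjustified. The paper takes a different route: the strict positivity comes from the \emph{outer} region, via the elementary inequality $\mathrm{e}^t\ge 1+\frac{t^k}{k!}$ applied directly to the exponent (with $f$ included), producing a term $\sim c_\epsilon^{-\tau}\int_0^a r^{N-1}(-N\ln r)^{\frac{N}{2}+1+\frac{N}{2}f(r)}\,\mathrm{d}r$ that is a \emph{fixed positive constant} times a decaying prefactor, which is shown to dominate the $O(1/a_\epsilon)$ and other errors after a careful choice of $a$ with $m_a<1+\tfrac{2}{N}$.

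There is a second gap in your treatment of $f$. You frame the argument as a perturbation off $I_\epsilon(0)$ with $|\mathrm{err}_f(\epsilon)|\le O\bigl((-\ln\epsilon)^{1-\sigma}\ln(-\ln\epsilon)\bigr)$ and then ask for $|\mathrm{err}_f|<E_\epsilon$. But this lemma is stated under $\sigma>1$, and when $\sigma$ is close to $1$ the quantity $L_\epsilon^{1-\sigma}\ln L_\epsilon$ decays much more slowly than any excess you can hope to extract (the paper's excess is of order $c_\epsilon^{-\tau}\asymp L_\epsilon^{-\tau N/(N+2)}$ with $\tau N/(N+2)>1$), so the comparison fails. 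The paper avoids this entirely: on the inner ball, $v_\epsilon\ge1$ and $f>0$, so the $f$-term only helps and no error bound is needed there; on the outer annulus, the polynomial bound $\mathrm{e}^t\ge1+\frac{t^k}{k!}$ is applied to the full exponent (including $f$), so no comparison to the $f\equiv0$ case is ever made. Finally, you concede yourself that the ``delicate bookkeeping'' has not been carried out, which is precisely where the proof lives. As written, the argument does not close.
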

\begin{proof}
We will use the sequence $v_{\epsilon}$ given by \eqref{sv_e}. From  \eqref{c}, in order to satisfy $(ii)$  we must have  $b_{\epsilon}$ of the form
\begin{equation}\label{LambdaE}
\begin{aligned}
\mu_{N}b_{\epsilon}&= \mu_{N} c^{\frac{N+2}{N}}_{\epsilon}-\frac{N}{2}\ln\left(1+ a_{\epsilon}\right)+N\ln{\epsilon L_{\epsilon}}
&= -\frac{N}{2}\big[\Psi\Big(\frac{N}{2}+1\Big)+\gamma\big]+O\left(\frac{1}{1+a_{\epsilon}}\right).
\end{aligned}
\end{equation}
Thus, the elementary inequality  $(1+t)^{\frac{N+2}{N}}\ge 1+\frac{N+2}{N}t,\; t>-1$ gives 
\begin{eqnarray*}
 \mu_{N}|v_{\epsilon}|^{\frac{N+2}{N}}&=&\mu_{N}c^{\frac{N+2}{N}}_{\epsilon}\left| 1-\frac{\frac{N}{2\mu_{N}}\ln\left(1+ a_{N}\left(\frac{r}{\epsilon}\right)^{2}\right)+b_{\epsilon}}{c_{\epsilon}^{\frac{N+2}{N}}}\right|^{\frac{N+2}{N}}\\
 &\ge &\mu_{N} c^{\frac{N+2}{N}}_{\epsilon}- \left(\frac{N}{2}+1\right)\ln\left(1+ a_{N}\left(\frac{r}{\epsilon}\right)^{2}\right)-\left(1+\frac{2}{N}\right)\mu_{N}b_{\epsilon}\\
 &=& \Psi\Big(\frac{N}{2}+1\Big)+\gamma -N\ln{\epsilon L_{\epsilon}}+\frac{N}{2}\ln(1+a_{\epsilon})+O\left(\frac{1}{1+a_{\epsilon}}\right)\\
 &-&\left(\frac{N}{2}+1\right)\ln\left(1+ a_{N}\left(\frac{r}{\epsilon}\right)^{2}\right),
\end{eqnarray*}
for any $r\in (0,\epsilon L_{\epsilon})$.
Hence, using \eqref{GPsi} with $x=\frac{N}{2}$ and $y=1$
\begin{equation}\nonumber
\begin{aligned}
& \int_{0}^{\epsilon L_{\epsilon}}r^{N-1}\mathrm{e}^{ \mu_{N}|v_{\epsilon}|^{\frac{N+2}{N}}}\mathrm{d}r\\
& \ge \frac{\epsilon^{N}}{2a_{N}^{\frac{N}{2}}}\Big(\mathrm{e}^{\Psi\big(\frac{N}{2}+1\big)+\gamma -N\ln{\epsilon L_{\epsilon}}+\frac{N}{2}\ln(1+a_{\epsilon})+O\left(\frac{1}{1+a_{\epsilon}}\right)}\Big)\Big(\int_{0}^{a_{\epsilon}}\frac{s^{\frac{N}{2}-1}}{\left(1+s\right)^{\frac{N}{2}+1}}\mathrm{d}s\Big)\\
&=\frac{\epsilon^{N}}{2a_{N}^{\frac{N}{2}}}\Big(\mathrm{e}^{\Psi\big(\frac{N}{2}+1\big)+\gamma -N\ln{\epsilon L_{\epsilon}}+\frac{N}{2}\ln(1+a_{\epsilon})+O\left(\frac{1}{1+a_{\epsilon}}\right)}\Big)\Big(\frac{2}{N}+O\Big(\frac{1}{a_{\epsilon}}\Big)\Big)\\
\end{aligned}
\end{equation}
because (L'Hospital's rule)
$$
\lim_{x\rightarrow\infty}x\left(\int_{x}^{\infty}\frac{s^{\frac{N}{2}-1}}{\left(1+s\right)^{\frac{N}{2}+1}}\mathrm{d}s\right)=1.
$$
Therefore, using $\mathrm{e}^{t}\ge 1+t,\;\; t\in\mathbb{R}$
\begin{equation}\nonumber
\begin{aligned}
\int_{0}^{\epsilon L_{\epsilon}}r^{N-1}\mathrm{e}^{\mu_{N}|v_{\epsilon}|^{\frac{N+2}{N}}}\mathrm{d}r&\ge \frac{\epsilon^{N}}{Na_{N}^{\frac{N}{2}}}\Big(\mathrm{e}^{\Psi\big(\frac{N}{2}+1\big)+\gamma -N\ln{\epsilon L_{\epsilon}}+\frac{N}{2}\ln(1+a_{\epsilon})+O\left(\frac{1}{1+a_{\epsilon}}\right)}\Big)\Big(1+O\Big(\frac{1}{a_{\epsilon}}\Big)\Big)\\
&= \frac{1}{N}\Big(1+\frac{1}{a_{\epsilon}}\Big)^{\frac{N}{2}}\Big(\mathrm{e}^{\Psi\big(\frac{N}{2}+1\big)+\gamma+O\left(\frac{1}{1+a_{\epsilon}}\right)}\Big)\Big(1+O\Big(\frac{1}{a_{\epsilon}}\Big)\Big)\\
&\ge  \frac{1}{N}\Big(1+\frac{1}{a_{\epsilon}}\Big)^{\frac{N}{2}}\Big(\mathrm{e}^{\Psi\big(\frac{N}{2}+1\big)+\gamma}\Big)\Big(1+O\Big(\frac{1}{1+a_{\epsilon}}\Big)\Big)\Big(1+O\Big(\frac{1}{a_{\epsilon}}\Big)\Big)\\
&=\frac{1}{N}\Big(1+\frac{1}{a_{\epsilon}}\Big)^{\frac{N}{2}}\Big(\mathrm{e}^{\Psi\big(\frac{N}{2}+1\big)+\gamma}\Big)+O\Big(\frac{1}{a_{\epsilon}}\Big).
\end{aligned}
\end{equation}
It follows that 
\begin{equation}\label{Sb-part1}
\int_{0}^{\epsilon L_{\epsilon}}r^{N-1}\mathrm{e}^{\mu_{N}|v_{\epsilon}|^{\frac{N+2}{N}}}\mathrm{d}r\ge \frac{1}{N}\mathrm{e}^{\Psi\big(\frac{N}{2}+1\big)+\gamma}+O\Big(\frac{1}{a_{\epsilon}}\Big).
\end{equation}
On the other hand,  using $\mathrm{e}^{t}\ge 1+\frac{t^{k}}{k!},\;\; t\ge0$
\begin{equation}\label{Sb-part2}
\begin{aligned}
&\int_{\epsilon L_{\epsilon}}^{1}r^{N-1}\mathrm{e}^{\mu_{N}|v_{\epsilon}|^{\frac{N+2}{N}}}\mathrm{d}r\ge \int_{\epsilon L_{\epsilon}}^{1}r^{N-1}\Big(1+\frac{\mu^{k}_N}{k!}|v_{\epsilon}|^{k\frac{N+2}{N}}\Big)\mathrm{d}r\\
&=\frac{1}{N}+\frac{1}{c^{\frac{N+2}{N}}_{\epsilon}}\Big[\frac{1}{k!(\mu_{N}N)}\int_{0}^{-N\ln(\epsilon L_{\epsilon})}s^{\frac{N}{2}+1}\mathrm{e}^{-s}\mathrm{d}s+  O\Big(c^{\frac{N}{N+2}}_{\epsilon}(\epsilon L_{\epsilon})^{N}\Big)\Big].
\end{aligned}
\end{equation}
From \eqref{Sb-part1} and \eqref{Sb-part2}, we get 
\begin{equation}\label{Sb-total}
\begin{aligned}
&\int_{0}^{1}r^{N-1}\mathrm{e}^{\mu_{N}|v_{\epsilon}|^{\frac{N+2}{N}}}\mathrm{d}r\ge  \frac{1}{N}\Big(1+\mathrm{e}^{\Psi\big(\frac{N}{2}+1\big)+\gamma}\Big)\\
&+\frac{1}{c^{\frac{N+2}{N}}_{\epsilon}}\Big[\frac{1}{k!(\mu_{N}N)}\int_{0}^{-N\ln(\epsilon L_{\epsilon})}s^{\frac{N}{2}+1}\mathrm{e}^{-s}\mathrm{d}s+  O\Big(c^{\frac{N}{N+2}}_{\epsilon}(\epsilon L_{\epsilon})^{N}\Big)+O\Big(\frac{c^{\frac{N+2}{N}}_{\epsilon}}{a_{\epsilon}}\Big)\Big].
\end{aligned}
\end{equation}
Note that
\begin{equation}\nonumber
\int_{0}^{-N\ln(\epsilon L_{\epsilon})}s^{\frac{N}{2}+1}\mathrm{e}^{-s}\mathrm{d}s\to \Gamma(\frac{N}{2}+2)>0, \quad c^{\frac{N}{N+2}}_{\epsilon}(\epsilon L_{\epsilon})^{N}\rightarrow 0,\quad \frac{c^{\frac{N+2}{N}}_{\epsilon}}{a_{\epsilon}}\to 0
\end{equation}
as $\epsilon\to 0$. Thus, we obtain  
\begin{equation}\label{Sb-total-limpa}
\begin{aligned}
&\int_{0}^{1}r^{N-1}\mathrm{e}^{\mu_{N}|v_{\epsilon}|^{\frac{N+2}{N}}}\mathrm{d}r> \frac{1}{N}\Big(1+\mathrm{e}^{\Psi\big(\frac{N}{2}+1\big)+\gamma}\Big)
\end{aligned}
\end{equation}
for $\epsilon>0$ small enough.

Now, since $v_{\epsilon}\ge 1$ and $f>0$ on $(0,\epsilon L_{\epsilon})$,  from \eqref{Sb-part1} we have
\begin{equation}\label{Sb2-part1}
\int_{0}^{\epsilon L_{\epsilon}}r^{N-1}\mathrm{e}^{\mu_{N}|v_{\epsilon}|^{\frac{N+2}{N}+f(r)}}\mathrm{d}r\ge \int_{0}^{\epsilon L_{\epsilon}}r^{N-1}\mathrm{e}^{\mu_{N}|v_{\epsilon}|^{\frac{N+2}{N}}}\mathrm{d}r\ge \frac{1}{N}\mathrm{e}^{\Psi\big(\frac{N}{2}+1\big)+\gamma}+O\Big(\frac{1}{a_{\epsilon}}\Big).
\end{equation}
Let $0<a<1$. By using \eqref{limsup}, we get $(-
\ln r)^{f(r)}=\mathrm{e}^{f(r)\ln (-
\ln r)}\to 1$ as $r\to 0$. Thus, if $a$ is chosen small enough we have $$\sup\{\big(-N\ln r\big)^{\frac{N}{2}f(r)}\;\vert\; r\in (0,a]\}<\frac{3}{2}.$$ 
Hence, since $\int_{0}^{1}r^{N-1}(-N\ln r)^{\frac{N}{2}+1}\mathrm{d}r=\Gamma(\frac{N}{2}+2)/N$ we obtain
\begin{align*}
   0< \int_{0}^{a}r^{N-1}\big(-N\ln r\big)^{ \frac{N}{2}+1+\frac{N}{2}f(r)}\mathrm{d}r\le \frac{3}{2}\int_{0}^{a}r^{N-1}\big(-N\ln r\big)^{ \frac{N}{2}+1}\mathrm{d}r\le\frac{3\Gamma(\frac{N}{2}+2)}{2N}.
\end{align*}
In particular, 
\begin{equation}\label{f-oorigin}
   \lim_{\epsilon\to 0}\int_{0}^{\epsilon L_{\epsilon}}r^{N-1}\big(-N\ln r\big)^{ \frac{N}{2}+1+\frac{N}{2}f(r)}\mathrm{d}r=0.
\end{equation}
For $\epsilon>0$ small enough we have $0<\epsilon L_{\epsilon}<a$.  Thus,  $\mathrm{e}^{t}\ge 1+\frac{t^{k}}{k!},\;\; t\ge0$ yields (recall $k=N/2$)
\begin{equation}\label{Sb2-part2}
\begin{aligned}
&\int_{\epsilon L_{\epsilon}}^{1}r^{N-1}\mathrm{e}^{\mu_{N}|v_{\epsilon}|^{\frac{N+2}{N}+f(r)}}\mathrm{d}r\ge \frac{1}{N} +O\Big((\epsilon L_{\epsilon})^{N}\Big)+\frac{\mu^{k}_N}{k!}\int_{\epsilon L_{\epsilon}}^{1}r^{N-1}|v_{\epsilon}|^{k(\frac{N+2}{N}+f(r))}\mathrm{d}r\\
&=\frac{1}{N} +O\Big((\epsilon L_{\epsilon})^{N}\Big)+\frac{\mu^{k}_N}{k!}\int_{\epsilon L_{\epsilon}}^{1}r^{N-1}|v_{\epsilon}|^{ \frac{N}{2}+1+\frac{N}{2}f(r)}\mathrm{d}r\\
& \ge \frac{1}{N} +O\Big((\epsilon L_{\epsilon})^{N}\Big)+\frac{\mu^{k}_N}{k!}\int_{\epsilon L_{\epsilon}}^{a}r^{N-1}|v_{\epsilon}|^{ \frac{N}{2}+1+\frac{N}{2}f(r)}\mathrm{d}r\\
&=\frac{1}{N} +O\Big((\epsilon L_{\epsilon})^{N}\Big)+\frac{\mu^{k}_N}{k!}\int_{\epsilon L_{\epsilon}}^{a}r^{N-1}\Bigg(\frac{1}{\mu_{N} c^{\frac{2}{N}}_{\epsilon}}\Bigg)^{\frac{N}{2}+1+\frac{N}{2}f(r)}\Big(-N\ln r\Big)^{ \frac{N}{2}+1+\frac{N}{2}f(r)}\mathrm{d}r\\
& \ge \frac{1}{N} +O\Big((\epsilon L_{\epsilon})^{N}\Big)+\frac{\mu^{k}_N}{k!}\Bigg(\frac{1}{\mu_{N} c^{\frac{2}{N}}_{\epsilon}}\Bigg)^{ \frac{N}{2}+1+\frac{N}{2}m_a}\int_{\epsilon L_{\epsilon}}^{a}r^{N-1}\big(-N\ln r\big)^{ \frac{N}{2}+1+\frac{N}{2}f(r)}\mathrm{d}r,
\end{aligned}
\end{equation}
where $0<m_{a}=\max\{f(r)\; \vert\; r\in [0, a]\}<\infty$. Moreover, from 
\eqref{f-oorigin} 
\begin{equation}\label{inc-gamma}
\begin{aligned}
\int_{\epsilon L_{\epsilon}}^{a}r^{N-1}\big(-N\ln r\big)^{ \frac{N}{2}+1+\frac{N}{2}f(r)}\mathrm{d}r&=\int_{0}^{a}r^{N-1}\big(-N\ln r\big)^{ \frac{N}{2}+1+\frac{N}{2}f(r)}\mathrm{d}r\\
& \quad -\int_{0}^{\epsilon L_{\epsilon}}r^{N-1}\big(-N\ln r\big)^{ \frac{N}{2}+1+\frac{N}{2}f(r)}\mathrm{d}r\\
& =\int_{0}^{a}r^{N-1}\big(-N\ln r\big)^{ \frac{N}{2}+1+\frac{N}{2}f(r)}\mathrm{d}r+o(1).
\end{aligned}
\end{equation}
 Combining \eqref{Sb2-part1},\eqref{Sb2-part2} and \eqref{inc-gamma} we can write
\begin{equation}\label{Sb2-final}
\begin{aligned}
&\int_{0}^{1}r^{N-1}\mathrm{e}^{\mu_{N}|v_{\epsilon}|^{\frac{N+2}{N}+f(r)}}\mathrm{d}r\ge \frac{1}{N}\Big(1+\mathrm{e}^{\Psi\big(\frac{N}{2}+1\big)+\gamma}\Big)
 \\
&+\frac{1}{\mu^{1+\frac{N}{2}m_a}_Nc^{1+\frac{2}{N}+m_a}_{\epsilon}k!}\left[\int_{0}^{a}r^{N-1}\big(-N\ln r\big)^{ \frac{N}{2}+1+\frac{N}{2}f(r)}\mathrm{d}r\right.\\
&\left.\quad\quad+O\Big(\frac{c^{1+\frac{2}{N}+m_a}_{\epsilon}}{a_{\epsilon}}\Big)+O\Big((\epsilon L_{\epsilon})^{N}c^{1+\frac{2}{N}+m_a}_{\epsilon}\Big)+o(1)\right].
\end{aligned}
\end{equation}
From \eqref{Sb2-final} it remains to ensure that there exists $0<a<1$ (small enough) such that 
\begin{equation}\label{choice-aaa}
\tau:=1+\frac{2}{N}+m_a\;\Rightarrow\;\frac{c^{\tau}_{\epsilon}}{a_{\epsilon}}\to 0\quad\mbox{and}\quad(\epsilon L_{\epsilon})^{N}c^{\tau}_{\epsilon}\to 0,\;\;\mbox{as}\;\; \epsilon\to 0. 
\end{equation}
However, from \eqref{c} we have
\begin{equation}\nonumber
\begin{aligned}
c^{\tau}_{\epsilon}=\left[\frac{N}{2\mu_{N}}\left(\ln(1+a_{\epsilon})-\Psi\Big(\frac{N}{2}+1\Big)-\gamma+O\left(\frac{1}{1+a_{\epsilon}}\right)-2\ln \epsilon L_{\epsilon}\right)\right]^{\frac{\tau N}{N+2}}.
\end{aligned}
\end{equation}
Consequently,
\begin{equation}\nonumber
\begin{aligned}
\frac{c^{\tau}_{\epsilon}}{a_{\epsilon}}=\left[\frac{N}{2\mu_{N}}\frac{1}{a^{\frac{N+2}{\tau N}}_{\epsilon}}\left(\ln(1+a_{\epsilon})-\Psi\Big(\frac{N}{2}+1\Big)-\gamma+O\left(\frac{1}{1+a_{\epsilon}}\right)-2\ln \epsilon L_{\epsilon}\right)\right]^{\frac{\tau N}{N+2}}.
\end{aligned}
\end{equation}
We recall that $a_{\epsilon}=a_{N}L^{2}_{\epsilon}$, with $L_{\epsilon}=-\ln\epsilon$. Thus, $\ln(1+a_{\epsilon})/a_{\epsilon}^{\eta}\to 0$ as $\epsilon\to 0$, for any $\eta>0$. Hence,  to conclude the first part in \eqref{choice-aaa}, it is sufficient to choose $0<a<1$ such that  
\begin{equation}\nonumber
\frac{x-\ln x}{x^{\frac{2(N+2)}{\tau N}}}\to 0\quad\mbox{as}\quad x\to +\infty
\end{equation}
which holds provided that $a$ is chosen such that $0<m_a<1+\frac{2}{N}$ that is possible due to $f(0)=0$ and $f>0$ on $(0,1)$. In addition, 
\begin{equation}\nonumber
\begin{aligned}
(\epsilon L_{\epsilon})^{N} c^{\tau}_{\epsilon}=\left[\frac{N}{2\mu_{N}}(\epsilon L_{\epsilon})^{\frac{N+2}{\tau}} \left(\ln(1+a_{\epsilon})-\Psi\Big(\frac{N}{2}+1\Big)-\gamma+O\left(\frac{1}{1+a_{\epsilon}}\right)-2\ln \epsilon L_{\epsilon}\right)\right]^{\frac{\tau N}{N+2}}.
\end{aligned}
\end{equation}
Note that $(\epsilon L_{\epsilon})^{\eta}\ln (\epsilon L_{\epsilon})\to 0$, as $\epsilon\rightarrow 0$, for any $\eta>0$. Thus,  we only need to choose $a$ such that
\begin{equation}\nonumber
\begin{aligned}
&\lim_{\epsilon\to 0}(\epsilon L_{\epsilon})^{\frac{N+2}{\tau}}\ln (1+a_{\epsilon})=\lim_{x\rightarrow\infty} \mathrm{e}^{-\frac{(N+2)x}{\tau}}x^{\frac{N+2}{\tau}}\ln( 1+a_{N}x^2)=0,
\end{aligned}
\end{equation}
which holds for any $m_a>0$.
\end{proof}
\begin{lemma}[concentration-compactness] \label{cc-a} Suppose that $f$  satisfies  \eqref{limsup}. Let $(v_j)\subset X_{1}$ be a sequence such that
$\|v_{j}\|_{X_1}=1$ and $v_{j}\rightharpoonup v\not\equiv 0$ weakly in $X_1.$ Set 
\begin{equation}
P=\left\{\begin{aligned}
&\left(1-\|v\|^{k+1}_{X_1}\right)^{-\frac{2}{N}},\;\; 
&\mbox{if}&\;\; \|v\|_{X_1}<1\\
&\infty, \;\; &\mbox{if}&\;\; \|v\|_{X_1}=1.\\
\end{aligned}\right.
\end{equation}
Then, for any $1<q<P$ we have
\begin{equation}\label{2uniformboundedness}
\limsup_{j}\int_{0}^{1}r^{N-1}\mathrm{e}^{q\mu_{N}|v_j|^{\frac{N+2}{N}+f(r)}}\mathrm{d}r<\infty.
\end{equation}
\end{lemma}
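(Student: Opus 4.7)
The plan is to reduce the supercritical exponential integral to the critical-threshold bound of Proposition~\ref{prop1} via a Lions-type concentration-compactness argument. Write $w_j = v_j - v$, so $w_j \rightharpoonup 0$ weakly in $X_1$. A Brezis--Lieb splitting of the $(k+1)$-power norm gives
$$\|w_j\|^{k+1}_{X_1} \to 1 - \|v\|^{k+1}_{X_1};$$
since $k+1 = (N+2)/2$, raising to the power $2/N$ implies $\|w_j\|^{(N+2)/N}_{X_1} \to (1-\|v\|^{k+1}_{X_1})^{2/N} = 1/P$. Because $v \not\equiv 0$, one also has $\|w_j\|_{X_1} < 1$ for all $j$ large.

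To deal with the possibly unbounded behavior of $f$ near $r = 1$, I first localize. By \eqref{r-estimate}, $|v_j(r)| \leq (-(N/\mu_N)\ln r)^{N/(N+2)}$, so for $r_0 = \mathrm{e}^{-\mu_N/N}$ one has $|v_j(r)| \leq 1$ on $[r_0, 1]$; writing $\alpha(r) = (N+2)/N + f(r)$, this gives $|v_j|^{\alpha(r)} \leq 1$ on that region, so the contribution to the integral over $[r_0,1]$ is bounded by $\mathrm{e}^{q\mu_N}/N$. On $[0, r_0]$, $\alpha(r)$ is bounded by some constant $M = M(r_0)$ (as $f$ is continuous on $[0, r_0]$), and the elementary inequality
$$(a+b)^{\alpha} \leq (1+\delta)\,a^{\alpha} + C_{\delta, M}\,b^{\alpha}, \qquad a,b \geq 0,\ \alpha \in [1, M],\ \delta > 0,$$
applied with a uniform constant yields the pointwise bound
$$\mathrm{e}^{q\mu_N |v_j|^{\alpha(r)}} \leq \mathrm{e}^{q\mu_N(1+\delta)|w_j|^{\alpha(r)}} \cdot \mathrm{e}^{q\mu_N C_{\delta, M}|v|^{\alpha(r)}}.$$

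H\"older's inequality with conjugate exponents $p > 1$ and $p'$ separates the two factors. The $v$-factor is bounded independently of $j$ by Proposition~\ref{prop1}(i) applied to the fixed function $v \in X_1$. For the $w_j$-factor, I normalize $\tilde{w}_j = w_j/\|w_j\|_{X_1}$ (well-defined for $j$ large); using $\|w_j\|_{X_1} < 1$ eventually, we have $\|w_j\|^{f(r)}_{X_1} \leq 1$ and hence
$$|w_j|^{\alpha(r)} \leq \|w_j\|^{(N+2)/N}_{X_1}\,|\tilde{w}_j|^{\alpha(r)}.$$
Since $q < P$, I pick $p > 1$ near $1$ and $\delta > 0$ small so that $pq(1+\delta) < P$; then $pq\mu_N(1+\delta)\|w_j\|^{(N+2)/N}_{X_1} \leq \mu_N$ for $j$ large, and Proposition~\ref{prop1}(ii) applied to $\tilde{w}_j$ furnishes the uniform bound, namely the finite quantity $\mathcal{MT}_{N}(f;\mu_N)$. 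The main obstacle I anticipate is the rigorous justification of the Brezis--Lieb splitting of the $X_1$-norm, because weak convergence in $X_1$ does not automatically deliver a.e.\ convergence of the derivatives; one must instead exploit the $L^{k+1}$-based structure of the norm together with the compact embeddings \eqref{EbeddingsTM} available in this radial weighted setting to push through the standard a.e./Brezis--Lieb machinery.
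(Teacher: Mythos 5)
Your argument takes a genuinely different route from the paper, and the route as written has a real gap at exactly the point you flag.

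The paper does not touch the Brezis--Lieb or Lions machinery directly. Instead it exploits the pointwise radial bound \eqref{r-estimate} to absorb the supercritical perturbation: writing $|v_j|^{f(r)} \le (-\tfrac{N}{\mu_N}\ln r)^{\frac{N}{N+2}f(r)}$, and using \eqref{limsup} together with $f(0)=0$ to note that this factor tends to $1$ as $r\to 0^+$, it picks $\rho>0$ small and $\bar q$ with $q<\bar q<P$ so that on $(0,\rho)$ the supercritical exponent is dominated by $\bar q\,\mu_N|v_j|^{\frac{N+2}{N}}$. The claim is then reduced, on $(0,\rho)$, to the \emph{critical} concentration-compactness theorem already established in \cite[Theorem~1]{JJ2014CV}, while the region $[\rho,1)$ is handled trivially by \eqref{r-estimate}. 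In other words, the entire difficulty you are trying to resolve from scratch is delegated to a cited result, and the content of this lemma is only the reduction of the supercritical exponent to a slightly enlarged multiple of the critical one.

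Your alternative plan---split $v_j = w_j + v$, control $\|w_j\|_{X_1}$ via a Brezis--Lieb identity, then Young/H\"older the exponential apart and apply Proposition~\ref{prop1}(ii) to $\tilde w_j$---cannot be completed as stated. The Brezis--Lieb lemma for $\|v_j'\|^{k+1}_{L^{k+1}_{N-k}}$ requires pointwise a.e.\ convergence of the derivatives $v_j'\to v'$, and this is \emph{not} a consequence of $v_j\rightharpoonup v$ in $X_1$: weak convergence in $L^{k+1}_{N-k}$ of $(v_j')$ gives no a.e.\ subsequential convergence (think of oscillating sequences). The compact embeddings \eqref{EbeddingsTM} that you invoke to ``push through'' this step give strong (hence a.e.) convergence of $v_j$ itself, not of $v_j'$; they do not close the gap. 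This is precisely the well-known delicate point in Lions' concentration-compactness for Moser--Trudinger in $W^{1,N}_0$, and serious treatments (e.g.\ truncation into low/high parts and tracking Dirichlet energy of the high part, or a direct 1D change-of-variable argument as in \cite{JJ2014CV}) go around a.e.\ gradient convergence rather than through it. The rest of your computation (localization near $r=1$ via \eqref{r-estimate}, boundedness of $\alpha(r)$ on $[0,r_0]$, the elementary $(a+b)^\alpha$ inequality, H\"older, and the normalization of $\tilde w_j$) is sound conditional on the Brezis--Lieb identity, but without that identity the key fact $\|w_j\|^{(N+2)/N}_{X_1}\to 1/P$ is unjustified and the proof does not go through.
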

\begin{proof} From \eqref{r-estimate} we have
\begin{equation}\nonumber
\begin{aligned}
|v_j(r)|^{f(r)} \le \Big(-\frac{N}{\mu_{N}}\ln r\Big)^{\frac{N}{N+2}f(r)}, \quad r>0.
\end{aligned}
\end{equation}
From  \eqref{limsup},  we get $|\ln r|^{f(r)}\to 1$ as $r\to 0^{+}$. Then, for any $1<q<P$, we can pick $\rho>0$ small enough and $q<\overline{q}<P$ such that 
\begin{equation}\nonumber
\begin{aligned}
q\mu_{N}|v_j(r)|^{\frac{N+2}{N}+f(r)}&=q\mu_{N}|v_j(r)|^{f(r)} |v_j(r)|^{\frac{N+2}{N}}\\
&\le \Big(-\frac{N}{\mu_{N}}\ln r\Big)^{\frac{N}{N+2}f(r)}q\mu_{N}|v_j(r)|^{\frac{N+2}{N}}\\
&\le \overline{q}\mu_{N}|v_j(r)|^{\frac{N+2}{N}},
\end{aligned}
\end{equation}
for all $r\in (0,\rho)$. Thus,  from \cite[Theorem~1]{JJ2014CV}, we get 
\begin{equation}\label{2limisuporigin}
\limsup_{j\rightarrow\infty}\int_{0}^{\rho}r^{N-1}\mathrm{e}^{q\mu_{N}|v_j|^{\frac{N+2}{N}+f(r)}}\ud r\le \limsup_{j\rightarrow\infty}\int_{0}^{1}r^{N-1}\mathrm{e}^{\overline{q}\mu_{N}|v_j|^{\frac{N+2}{N}}}\ud r<+\infty.
\end{equation} 
On the other hand, since $f>0$ and $-N\ln r\to 0$ as $r\to 1^{-}$ we can take 
$$
\mathcal{C}=\sup\{\big(\frac{N}{\mu_N}|\ln r|\big)^{1+\frac{N}{N+2}f(r)}\;\vert\; r\in [\rho,1)\}<\infty.
$$
Thus, we from \eqref{r-estimate}  can write
\begin{equation}\label{2Ltfar0}
\begin{aligned}
\int_{\rho}^{1}r^{N-1}\mathrm{e}^{q\mu_{N}|v_j|^{\frac{N+2}{N}+f(r)}} \ud r\le \mathrm{e}^{q\mu_{N}\mathcal{C}}\int_{0}^{1}r^{N-1}\ud r. 
\end{aligned}
\end{equation}
By using \eqref{2limisuporigin} and \eqref{2Ltfar0} we obtain \eqref{2uniformboundedness}.
\end{proof}
\subsection{Attainability for $\mathcal{MT}_{N}(f)$}
Next, we are proving that  the  supremum $\mathcal{MT}_{N}(f)$ is attained provided that $f$  satisfies  \eqref{limsup}, with $\sigma>2$. Let $(v_j)\subset X_1$ be a maximizing sequence for $\mathcal{MT}_{N}(f)$, that is, $\|v_j\|_{X_1}=1$, 
 \begin{equation}
\mathcal{MT}_{N}(f)=\lim_{j\rightarrow\infty}\int_{0}^{1}r^{N-1}\mathrm{e}^{\mu_{N}|v_j|^{\frac{N+2}{N}+f(r)}}\ud r.
\end{equation}
Without loss of generality, we can assume that
\begin{equation}\label{weak-limitmax}
v_{j}\rightharpoonup v\;\;\mbox{weakly in}\;\; X_{1}\;\; \mbox{and}\;\; v_j(r)\rightarrow v(r)\;\;\mbox{a.e in}\; (0,1).
\end{equation}
If  $v\not\equiv 0$, from  the Vitali convergence theorem,  Lemma~\ref{cc-a}  and  \eqref{weak-limitmax} we ensure that $v$ is an extremal function for $\mathcal{MT}_{N}(f)$. Further, the following result, together with \eqref{lower-bound}, excludes  the case  $v\equiv0$ and ensures the attainability of $\mathcal{MT}_{N}(f)$.
\begin{lemma} Assume that $v\equiv 0$ in \eqref{weak-limitmax}. Then either 
\begin{enumerate}
\item [$(a)$] $(v_j)$ is  a NCS at origin and the inequality
\begin{equation}\label{ineq-false}
\mathcal{MT}_{N}(f)\le \frac{1}{N}\Big(1+\mathrm{e}^{\gamma+\Psi\big(\frac{N}{2}+1\big)}\Big)
\end{equation}
holds or
\item [$(b)$] $(v_j)$ is not a NCS at origin and the identity
\begin{equation}\label{equal-false}
\mathcal{MT}_{N}(f)=\frac{1}{N}
\end{equation}
holds.
\end{enumerate}
\end{lemma}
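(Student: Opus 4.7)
The plan is to treat the two alternatives separately, using in each case that $(v_j)$ is fixed and maximizing, so $\lim_j \int_0^1 r^{N-1} e^{\mu_N |v_j|^{(N+2)/N+f(r)}} \ud r = \mathcal{MT}_N(f)$. For (a), assuming $(v_j)\in\Upsilon_0$, the refined H\"older estimate (the analogue of \eqref{r-estimate} with the gradient integral restricted to $[r,1]$) combined with the concentration condition forces $v_j\to 0$ uniformly on $[\rho,1]$ for every $\rho>0$, and hence $\int_\rho^1 r^{N-1} e^{\mu_N|v_j|^{(N+2)/N+f(r)}}\ud r \to (1-\rho^N)/N$. On $(0,\rho)$ I would split $e^{\mu_N|v_j|^{(N+2)/N+f(r)}} = e^{\mu_N|v_j|^{(N+2)/N}}\, e^{\mu_N|v_j|^{(N+2)/N}(|v_j|^{f(r)}-1)}$ and bound the last factor by $e^{\eta(\rho)}$ with $\eta(\rho)\to 0$ as $\rho\to 0$; this is the same computation that produced \eqref{KR}, because $|v_j(r)|\le(-\tfrac{N}{\mu_N}\ln r)^{N/(N+2)}$ combined with \eqref{limsup} bounds the exponent by $f(r)|\ln r|\ln|\ln r|\lesssim|\ln r|^{1-\sigma}\ln|\ln r|\to 0$, where $\sigma>1$ is used decisively. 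Taking $\limsup_j$, invoking $(v_j)\in\Upsilon_0$ to dominate the remaining critical integral by $J$, applying Lemma~\ref{S-lemma}, and letting $\rho\to 0$ yields $\mathcal{MT}_N(f)\le J\le \tfrac{1}{N}(1+e^{\gamma+\Psi(N/2+1)})$.

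For (b), since $\|v_j\|_{X_1}=1$ and $v_j\rightharpoonup 0$, the only possible failure of the NCS conditions is the concentration one: up to a subsequence there exist $r_0,\delta>0$ with $c_N\int_{r_0}^1 r^{N-k}|v_j'|^{k+1}\ud r\ge c_N\delta$, so that $\theta:=1-c_N\delta\in(0,1)$ satisfies $c_N\int_0^{r_0}r^{N-k}|v_j'|^{k+1}\ud r\le\theta$. The key step is to convert this gradient deficit into a supercritical exponential bound. I would set $\tilde v_j(r):=v_j(r)-v_j(r_0)$ on $[0,r_0]$ and extend by zero to $(0,1)$; then $\tilde v_j\in X_1$ with $\|\tilde v_j\|_{X_1}\le\theta^{1/(k+1)}$, and \eqref{kPAMS-TM} applied to $\tilde v_j/\theta^{1/(k+1)}$ delivers $\int_0^{r_0} r^{N-1} e^{q\mu_N|\tilde v_j|^{(N+2)/N}}\ud r\le C$ with $q=\theta^{-2/N}>1$. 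The compact embedding \eqref{EbeddingsTM} together with Arzel\`a--Ascoli on $[r_0,1]$ gives $v_j(r_0)\to 0$, and the elementary inequality $(a+b)^{(N+2)/N}\le(1+\epsilon)a^{(N+2)/N}+C_\epsilon b^{(N+2)/N}$ transfers the bound from $\tilde v_j$ to $v_j$, producing $\int_0^{r_0} r^{N-1} e^{(1+\epsilon')\mu_N|v_j|^{(N+2)/N}}\ud r\le C$ for some $\epsilon'\in(0,q-1)$. Combining this with the near-origin estimate $|v_j|^{f(r)}\le 1+\eta$ (from (a), again using $\sigma>1$) and with $v_j\to 0$ uniformly on $[r_0,1]$, the function $e^{\mu_N|v_j|^{(N+2)/N+f(r)}}$ sits in a fixed $L^{1+\epsilon''}(r^{N-1}\ud r)$-ball, hence is equi-integrable on $(0,1)$. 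Since $v_j\to 0$ a.e.\ by \eqref{weak-limitmax}, Vitali's theorem then gives $\int_0^1 r^{N-1} e^{\mu_N|v_j|^{(N+2)/N+f(r)}}\ud r\to 1/N$ along the subsequence, and the maximizing property forces $\mathcal{MT}_N(f)=1/N$.

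The main obstacle is the calibration in (b): transferring the supercritical Moser bound from $\tilde v_j$ first to $v_j$ and then to the full exponent $(N+2)/N+f(r)$ accumulates several multiplicative factors $(1+\epsilon)$, $(1+\epsilon')$, $(1+\eta)$, whose product must remain strictly below $q=\theta^{-2/N}>1$. All the slack comes from $v_j(r_0)\to 0$ and from the quantitative smallness of $|v_j|^{f(r)}-1$ near the origin guaranteed by \eqref{limsup} with $\sigma>1$, which is exactly the same input powering the comparison in (a). A secondary subtlety is that (b) was obtained along a subsequence; but this is harmless since $(v_j)$ is maximizing and every subsequential limit of the integral coincides with $\mathcal{MT}_N(f)$.
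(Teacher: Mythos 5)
Your argument is correct, and in both parts it follows a genuinely different route from the paper's.

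For part (a), the paper subtracts the critical integral and bounds the remainder by $\int_0^\rho r^{-1}H(r)\,\mathrm{d}r$ using the crude estimate $r^{N-1}\mathrm{e}^{\mu_N|v_j|^{(N+2)/N}}\le r^{-1}$, so the $t^{1-\sigma}\ln t$ tail forces $\sigma>2$ for convergence. You instead keep the factor $\mathrm{e}^{\mu_N|v_j|^{(N+2)/N}}$ intact, control it via the critical Trudinger–Moser bound (indeed by the concentration level $J$ itself on $(0,\rho)$), and only bound the correction factor $\mathrm{e}^{\mu_N|v_j|^{(N+2)/N}(|v_j|^{f(r)}-1)}$ uniformly by $\mathrm{e}^{\eta(\rho)}$ with $\eta(\rho)=\sup_{(0,\rho)}\mu_N|v_j|^{(N+2)/N}(|v_j|^{f(r)}-1)^{+}\lesssim\sup_{(0,\rho)}|\ln r|^{1-\sigma}\ln|\ln r|\to 0$. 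This uses only $\sigma>1$. The bookkeeping $\mathcal{MT}_N(f)\le \mathrm{e}^{\eta(\rho)}\big[J-(1-\rho^N)/N\big]+(1-\rho^N)/N\to J$ then closes the argument cleanly. This is an improvement: it shows the conclusion of part (a) under the weaker hypothesis $\sigma>1$, which would weaken the assumption of Theorem~\ref{thm2} as well.

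For part (b), the paper truncates with a cutoff $\xi$ supported in $[0,r_0]$ equal to $1$ on $[0,r_0/2]$, controls $\|\xi v_j\|_{X_1}$ via the triangle inequality plus $\|\xi' v_j\|_{L^{k+1}_{N-k}}\to 0$ from the compact embedding, and applies Proposition~\ref{prop1}(ii) with the gained exponent $q>1$. You instead shift by the boundary value, $\tilde v_j=v_j-v_j(r_0)$ on $[0,r_0]$ extended by zero; $\tilde v_j\in X_1$ with $\|\tilde v_j\|_{X_1}^{k+1}\le\theta<1$, and you use $v_j(r_0)\to 0$ (which is immediate from $v_j(r_0)=-\int_{r_0}^1 v_j'$ and $v_j'\rightharpoonup 0$ in $L^{k+1}_{N-k}$, perhaps cleaner to cite than Arzel\`a–Ascoli) together with the inequality $(a+b)^{(N+2)/N}\le(1+\epsilon)a^{(N+2)/N}+C_\epsilon b^{(N+2)/N}$ to transfer the gain. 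Both constructions deliver the same equi-integrability, and Vitali finishes. One detail worth writing out explicitly is that on the intermediate interval $[\rho,r_0]$ the integrand is uniformly bounded (via \eqref{r-estimate} and the continuity of $f$), so that the $L^{1+\epsilon''}$ bound near $0$ and the uniform convergence on $[r_0,1]$ do indeed give equi-integrability on all of $(0,1)$; this is implicit in your sketch but is where the $\rho$-dependent constant enters. Your remark on passing to a subsequence is correct and matches what the paper does tacitly.
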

\begin{proof}
For any $\rho>0$,  \eqref{r-estimate} and Lebesgue’s dominated convergence theorem yield
\begin{equation}\label{2-partin}
\begin{aligned}
\int_{\rho}^{1}r^{N-1}\mathrm{e}^{\mu_{N}|v_j|^{\frac{N+2}{N}+f(r)}}\ud r& =\int_{\rho}^{1}r^{N-1}\ud r+\int_{\rho}^{1}r^{N-1}\Big(\mathrm{e}^{\mu_{N}|v_j|^{\frac{N+2}{N}+f(r)}}-1\Big)\ud r\\
&=\int_{\rho}^{1}r^{N-1}\ud r +o(1)\\
&= \int_{\rho}^{1}r^{N-1}\mathrm{e}^{\mu_{N}|v_j|^{\frac{N+2}{N}}}\ud r +o(1).
\end{aligned}
\end{equation}
Also, 
\begin{equation}\label{2-partout}
\begin{aligned}
\int_{0}^{\rho}r^{N-1}\mathrm{e}^{\mu_{N}|v_j|^{\frac{N+2}{N}+f(r)}}\ud r&=\int_{0}^{\rho}r^{N-1}\Big(\mathrm{e}^{\mu_{N}|v_j|^{\frac{N+2}{N}}\big(|v_j|^{f(r)}-1\big)}-1\Big)\mathrm{e}^{\mu_{N}|v_j|^{\frac{N+2}{N}}}\ud r\\
& +\int_{0}^{\rho}r^{N-1}\mathrm{e}^{\mu_{N}|v_j|^{\frac{N+2}{N}}}\ud r.
\end{aligned}
\end{equation}
Combining \eqref{2-partin} with \eqref{2-partout}, we can write
\begin{equation}\label{2-partinout}
\begin{aligned}
\int_{0}^{1}r^{N-1}\mathrm{e}^{\mu_{N}|v_j|^{\frac{N+2}{N}+f(r)}}\ud r&=\int_{0}^{\rho}r^{N-1}\Big(\mathrm{e}^{\mu_{N}|v_j|^{\frac{N+2}{N}}\big(|v_j|^{f(r)}-1\big)}-1\Big)\mathrm{e}^{\mu_{N}|v_j|^{\frac{N+2}{N}}}\ud r\\
& +\int_{0}^{1}r^{N-1}\mathrm{e}^{\mu_{N}|v_j|^{\frac{N+2}{N}}}\ud r +o(1).
\end{aligned}
\end{equation}
Now, we are estimate the integral $\int_{0}^{\rho}$ on the right hand side of \eqref{2-partinout} for $\rho>0$ small enough. By using \eqref{r-estimate} and \eqref{limsup}, for $\rho>0$ (small)
\begin{equation}\nonumber
\begin{aligned}
\mathrm{e}^{\mu_{N}|v_j|^{\frac{N+2}{N}}\big(\big(|v_j|^{f(r)}-1\big)}& \le
\mathrm{e}^{(-N\ln r)\big((-\frac{N}{\mu_{N}}\ln r)^{\frac{N}{N+2}f(r)}-1\big)}\\
& \le \mathrm{e}^{(-N \ln r)\big( \big(-\frac{N}{\mu_{N}}\ln r\big)^{\frac{N}{N+2}\frac{c}{(-\ln r )^{\sigma}}}-1\big)}, 
\end{aligned}
\end{equation}
for some $c>0$ and for any $r\in (0, \rho)$. Note that 
\begin{equation}\nonumber
  \begin{aligned}
      \big(-\frac{N}{\mu_{N}}\ln r\big)^{\frac{N}{N+2}\frac{c}{(-\ln r )^{\sigma}}}&=\mathrm{e}^{\frac{N}{N+2}\frac{c}{(-\ln r )^{\sigma}}\ln \big(-\frac{N}{\mu_{N}}\ln r\big)}\\
      &= 1+ \frac{N}{N+2}\frac{c}{(-\ln r )^{\sigma}}\ln \big(-\frac{N}{\mu_{N}}\ln r\big)+o\Big(\frac{1}{(-\ln r)^{\sigma}}\ln \big(-\frac{N}{\mu_{N}}\ln r\big)\Big)_{\searrow 0}
  \end{aligned}  
\end{equation}
and thus 
\begin{equation}\nonumber
  \begin{aligned}
      (-N\ln r)\Big(\big(-\frac{N}{\mu_{N}}\ln r\big)^{\frac{N}{N+2}\frac{c}{(-\ln r )^{\sigma}}}-1\Big)&=\frac{N^2}{N+2}\frac{c}{(-\ln r )^{\sigma-1}}\ln \big(-\frac{N}{\mu_{N}}\ln r\big)\big[1+o(1)_{\searrow 0}\big].
  \end{aligned}  
\end{equation}
Therefore,  by using  $\mathrm{e}^{x}-1=x+o(x)$, as $x\to 0$,  we can write 
\begin{equation}\label{H-assym}
\begin{aligned}
    H(r) & := \mathrm{e}^{(-N \ln r)\big( \big(-\frac{N}{\mu_{N}}\ln r\big)^{\frac{N}{N+2}\frac{c}{(-\ln r )^{\sigma}}}-1\big)}-1\\
    & =\frac{cN^{2}}{(N+2)(-\ln r)^{\sigma-1}}\ln\Big(-\frac{N}{\mu_{N}}\ln r\Big)[1+o(1)_{r \searrow 0}].
    \end{aligned}
\end{equation}
Hence, using \eqref{r-estimate}
\begin{equation}\nonumber
\begin{aligned}
 \int_{0}^{\rho}r^{N-1}\Big(\mathrm{e}^{\mu_{N}|v_j|^{\frac{N+2}{N}}\big(|v_j|^{f(r)}-1\big)}-1\Big)\mathrm{e}^{\mu_{N}|v_j|^{\frac{N+2}{N}}}\ud r
 & \le \int_{0}^{\rho}r^{-1}\Big(\mathrm{e}^{-N\ln r\big((-\frac{N}{\mu_{N}}\ln r)^{\frac{N}{N+2}f(r)}-1\big)}-1\Big)\ud r\\
 &\le  \int_{0}^{\rho}r^{-1}H(r)\ud r.
\end{aligned}
\end{equation}
But, from \eqref{H-assym} and using that $\sigma>2$ we have 
\begin{align*}
     \int_{0}^{\rho}r^{-1}H(r)\mathrm{d}r &\le \frac{2cN^{2}}{N+2}\int_{0}^{\rho}r^{-1}\frac{1}{(-\ln r)^{\sigma-1}}\ln\Big(-\frac{N}{\mu_{N}}\ln r\Big)\mathrm{d}r\\
    &= \frac{2cN^{2}}{N+2}\int_{-\ln \rho}^{\infty}t^{1-\sigma}\ln\Big(\frac{Nt}{\mu_{N}}\Big)\mathrm{d}t<\infty.
\end{align*}
Hence
\begin{equation}\nonumber
 \lim_{\rho\to 0} \int_{0}^{\rho}r^{N-1}\Big(\mathrm{e}^{\mu_{N}|v_j|^{\frac{N+2}{N}}\big(|v_j|^{f(r)}-1\big)}-1\Big)\mathrm{e}^{\mu_{N}|v_j|^{\frac{N+2}{N}}}\ud r\le  \lim_{\rho\to 0} \int_{0}^{\rho}r^{-1}H(r)\ud r=0.
\end{equation}
Thus, if $(v_j)$ is  a NCS at origin, letting $\rho \to 0$ and $j\to\infty$ in \eqref{2-partinout}  and using Lemma~\ref{S-lemma} we obtain \eqref{ineq-false}.

Assuming that $(v_j)$  is not a NCS, we will  prove \eqref{equal-false}. Indeed, there exists $r_0>0$ such that
\begin{equation}\nonumber
\liminf_{j}c_{N}\int_{r_0}^{1}r^{N-k}|v^{\prime}_{j}|^{k+1}\ud r>0
\end{equation}
and, from $\|v_j\|_{X_1}=1$, we have
\begin{equation}\label{<1}
\limsup_{j}c_{N}\int_{0}^{r_0}r^{N-k}|v^{\prime}_{j}|^{k+1}\ud r\le \delta<1.
\end{equation}
Let us  take the auxiliary function
$\xi\in C^{1}([0,1]),$ with $0\le \xi\le 1$ such that
\begin{equation}\nonumber
\xi(r)=\left\{\begin{aligned}
&1,\;\;\mbox{if}\;\; r\in [0,r_0/2]\\
&0,\;\;\mbox{if}\;\; r\in [r_0, 1].
\end{aligned}\right.
\end{equation}
We have 
\begin{equation}\label{26ago03}
\begin{aligned}
   \|\xi v_j\|_{X_1} &= \|c_N\xi v^{\prime}_{j}+c_N\xi^{\prime}v_j\|_{L^{k+1}_{N-k}} \\
   & \le  \|c_N\xi v^{\prime}_j\|_{L^{k+1}_{N-k}}+\|c_N\xi^{\prime}v_j\|_{L^{k+1}_{N-k}}\\
   & \le \left(c_N\int_{0}^{r_0}r^{N-k}|v^{\prime}_{j}|^{k+1}\ud r\right)^{\frac{1}{k+1}}+\|c_N\xi^{\prime}v_j\|_{L^{k+1}_{N-k}}.
\end{aligned}
\end{equation}
From the compact embedding \eqref{EbeddingsTM} and \eqref{weak-limitmax}, we get
$v_j\rightarrow0$ in $L^{k+1}_{N-k}$. Then, \eqref{<1} and \eqref{26ago03} yield
\begin{equation}\label{norma<1}
\begin{aligned}
    \|\xi v_j\|_{X_1}<1,
\end{aligned}
\end{equation}
for large enough $j\in\mathbb{N}$.  From \eqref{norma<1},  there is $q>1$ such that $q \|\xi v_j\|^{\frac{N+2}{N}+f(r)}_{X_1}\le 1$, for $0\le r\le 1$. Thus, from  \eqref{r-estimate} and \eqref{supii}
\begin{equation}\nonumber
\begin{aligned}
 \int_{0}^{1}r^{N-1}\mathrm{e}^{q\mu_{N}|v_j|^{\frac{N+2}{N}+f(r)}}\ud r & \le \int_{0}^{\frac{r_0}{2}}r^{N-1}\mathrm{e}^{q\mu_{N}|v_j|^{\frac{N+2}{N}+f(r)}}\ud r +c_2 \\
 & = \int_{0}^{\frac{r_0}{2}}r^{N-1}\mathrm{e}^{q \|\xi v_j\|^{\frac{N+2}{N}+f(r)}_{X_1}\mu_{N}\big|\frac{\xi v_j}{ \|\xi v_j\|_{X_1}}\big|^{\frac{N+2}{N}+f(r)}}\ud r+ c_2\\
& \le  \int_{0}^{1}r^{N-1}\mathrm{e}^{\mu_{N}\big|\frac{\xi v_j}{ \|\xi v_j\|_{X_1}}\big|^{\frac{N+2}{N}+f(r)}}\ud r+ c_2\\
& \le c_1 +c_2.
\end{aligned}
\end{equation}
Hence, the Vitali convergence theorem yields  $\mathcal{MT}_{N}(f)=1/N$.
\end{proof}
\subsection{Regularity for auxiliary supercritical maximizers} 
Next, we prove that  if $v\in X_1$ is a maximizer for  $\mathcal{MT}_{N}(f)$ then $v\in C^{2}(0,1]\cap C^{1}[0,1]$. Firstly, inspired by \cite[Lemma~6]{MZ2020}, we prove the following:
\begin{lemma}\label{0-beha}  Let  $v\in X^{k+1}_{1}\;(k=N/2)$ be a non-increasing function.  Then,  for any  $\vartheta,\gamma_0>0$ it holds
\begin{equation}\label{MZ-beharior}
\lim_{r\to 0^+} r^{\vartheta}|v(r)|^{\frac{2}{N}}\mathrm{e}^{\gamma_0|v(r)|^{\frac{N+2}{N}}}=0.
\end{equation}

\end{lemma}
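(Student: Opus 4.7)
The key idea is to exploit the fact that the ``tail energy'' of $v$ near $0$ vanishes. Specifically, since $v\in X_1$ we have $\int_{0}^{1}s^{N-k}|v'|^{k+1}\,\mathrm{d}s<\infty$, so
$$\delta(r_0):=\left(c_N\int_{0}^{r_0}s^{N-k}|v'|^{k+1}\,\mathrm{d}s\right)^{1/(k+1)}\longrightarrow 0\quad\text{as}\quad r_0\to 0^+.$$
This gives us a free small parameter which, by choosing $r_0$ sufficiently small, will absorb the factor $\gamma_0$ in the exponential.

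\textbf{Outline.} Since $v$ is non-increasing with $v(1)=0$, we have $v\ge 0$ and $v(r)=\int_r^1|v'(s)|\,\mathrm{d}s$. Fix $r_0\in(0,1)$. For $0<r<r_0$ I would write
$$v(r)=v(r_0)+\int_r^{r_0}|v'(s)|\,\mathrm{d}s,$$
and then apply the Hölder inequality exactly as in the proof of \eqref{r-estimate}, using $N-k=k$ and hence $\int_r^{r_0}s^{-1}\mathrm{d}s=\ln(r_0/r)$. This yields
$$v(r)\le v(r_0)+\left(\tfrac{N}{\mu_N}\right)^{N/(N+2)}\delta(r_0)\left(\ln(r_0/r)\right)^{N/(N+2)}.$$

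\textbf{Main step.} Raising to the power $(N+2)/N>1$ and using the elementary inequality $(a+b)^p\le(1+\eps)b^p+C_\eps a^p$ (with $p=(N+2)/N$, $\eps>0$ arbitrary), together with the identity $\bigl((\ln(r_0/r))^{N/(N+2)}\bigr)^{(N+2)/N}=\ln(r_0/r)$, I obtain
$$v(r)^{(N+2)/N}\le C_\eps\, v(r_0)^{(N+2)/N}+(1+\eps)\tfrac{N}{\mu_N}\delta(r_0)^{(N+2)/N}\ln(r_0/r).$$
Exponentiating, there exist constants $M=M(r_0,\eps,v)$ and $\beta=\beta(r_0,\eps):=\tfrac{N}{\mu_N}(1+\eps)\gamma_0\delta(r_0)^{(N+2)/N}$ such that
$$\mathrm{e}^{\gamma_0 v(r)^{(N+2)/N}}\le M\, r^{-\beta},\qquad 0<r<r_0.$$
The crucial observation is that $\beta(r_0,\eps)\to 0$ as $r_0\to 0^+$, since $\delta(r_0)\to 0$.

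\textbf{Conclusion.} Given $\vartheta>0$, choose $\eps$ and then $r_0$ small enough so that $\beta(r_0,\eps)<\vartheta/2$. Then
$$r^{\vartheta}|v(r)|^{2/N}\mathrm{e}^{\gamma_0 v(r)^{(N+2)/N}}\le M\, r^{\vartheta/2}\,v(r)^{2/N}.$$
Finally the radial estimate \eqref{r-estimate} bounds $v(r)^{2/N}$ by a constant times $(-\ln r)^{2/(N+2)}$, which grows only logarithmically, so $r^{\vartheta/2}v(r)^{2/N}\to 0$ as $r\to 0^+$. This proves \eqref{MZ-beharior}.

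\textbf{Expected difficulty.} The computation is fairly mechanical; the only subtle point is the two-step choice of constants (first $\eps$, then $r_0$) to ensure that the exponent of $r$ obtained from the exponential term is strictly smaller than $\vartheta$. This is precisely where the integrability of $s^{N-k}|v'|^{k+1}$ near the origin — rather than just the radial bound \eqref{r-estimate} — is needed; without it, one could only achieve a polynomial bound with a fixed exponent proportional to $\|v\|_{X_1}^{(N+2)/N}$, insufficient for arbitrary $\vartheta,\gamma_0>0$.
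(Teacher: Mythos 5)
Your proof is correct, and it takes a genuinely different route from the paper's. The paper defines $\varphi(r)=|v(r)|^{2/N}\mathrm{e}^{\gamma_0|v(r)|^{(N+2)/N}}$, proves that $\varphi'\in L^1_{\vartheta}(0,1/2)$ by combining an embedding $\|v'\|_{L^m_{\vartheta}}\le C\|v\|_{X_1}$ (obtained via H\"older) with the weighted Trudinger--Moser inequality of \cite{JJ2012}, and then extracts the pointwise decay $r^{\vartheta}\varphi(r)\to 0$ by a standard integration/splitting argument of the form $r^\vartheta|\varphi(r)|\le r^\vartheta\varphi(1/2)+\int_r^s\tau^\vartheta|\varphi'|+\frac{r^\vartheta}{s^\vartheta}\int_s^{1/2}\tau^\vartheta|\varphi'|$. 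Your proof instead localizes the radial estimate \eqref{r-estimate}: by applying the same H\"older step only over $(r,r_0)$, the global norm $\|v\|_{X_1}$ is replaced by the local tail energy $\delta(r_0)$, which tends to $0$. Then the convexity inequality $(a+b)^p\le(1+\eps)b^p+C_\eps a^p$ with $p=\frac{N+2}{N}$ turns this into $\mathrm{e}^{\gamma_0 v(r)^{(N+2)/N}}\le M r^{-\beta}$ with $\beta$ as small as desired, and the conclusion follows at once. Your argument is more elementary and self-contained: it never invokes the weighted Trudinger--Moser inequality and yields an explicit, quantitative polynomial bound with arbitrarily small exponent. The paper's $L^1_\vartheta$-derivative approach is slightly heavier but fits naturally with the machinery already set up (the \cite{JJ2012} inequality is used throughout Section~\ref{sec3}). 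Both are valid; yours would be a cleaner standalone proof of this particular lemma.
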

\begin{proof}
There exists $m>1$ and $C>0$ depending only on $m, N$ and $\vartheta$ such that 
\begin{equation}\label{emb1}
\|v^{\prime}\|_{L^{m}_{\vartheta}}\le C\|v\|_{X_1}.
\end{equation}
Indeed, taking $m, q>1$   such that
$(\vartheta+1)q>k+1\ge mq$, the  H\"{o}lder inequality  yields 
\begin{equation}\nonumber
\begin{aligned}
\|v^{\prime}\|_{L^{mq}_{N-k}} &\le \Big(\int_{0}^{1}r^{N-k}\ud r\Big)^{\frac{1}{mq}-\frac{1}{k+1}}\|v^{\prime}\|_{L^{k+1}_{N-k}}.
\end{aligned}
\end{equation}
In addition,  using the above estimate and our choices for $m$ and $q$, we can write  (recall $k=N/2$ and $(\vartheta-\frac{k}{q})\frac{q}{q-1}>-1$)
 \begin{eqnarray*}
\int_{0}^1 r^{\vartheta}|v^{\prime}|^{m}\ud r&=&\int_{0}^1 
|v^{\prime}|^{m}r^{\frac{N-k}{q}}r^{\vartheta-\frac{k}{q}}\ud
r\\
&\le& \left(\int_{0}^{1}|v^{\prime}|^{mq}r^{N-k}\ud
r\right)^{\frac{1}{q}}\left(\int_{0}^{1}r^{(\vartheta-\frac{k}{q})\frac{q}{q-1}}\ud
r\right)^{\frac{q-1}{q}}\\
&\le & C\|v\|^{m}_{X_1},
 \end{eqnarray*}
which proves \eqref{emb1}.\\
Set 
$$
\varphi(r)=|v(r)|^{\frac{2}{N}}\, \mathrm{e}^{\gamma_0|v(r)|^{\frac{N+2}{N}}}.
$$
It remains to show that
\begin{equation}
\lim_{r\rightarrow 0^{+}}r^{\vartheta}\varphi(r)=0.
\end{equation}
Since $v$ is non-increasing function,  without loss of generality we can assume $v>0$ in $(0,1/2)$ and $\lim_{r\rightarrow 0^{+}}v(r)=+\infty$. Hence, there exists $C=C(N)$ such that  (use that $1/v$ is bounded near $0$)
\begin{eqnarray}
|\varphi^{\prime}(r)|&\le &\frac{2}{N}|v(r)|^{\frac{2-N}{N}}
|v^{\prime}(r)| \mathrm{e}^{\gamma_0 |v(r)|^{\frac{N+2}{N}}}+\gamma_0\frac{N+2}{N}|v(r)|^{\frac{4}{N}}|v^{\prime}(r)|
\mathrm{e}^{\gamma_0 |v(r)|^{\frac{N+2}{N}}}\nonumber\\
&\le & C\left(
|v^{\prime}(r)| \mathrm{e}^{\gamma_0 |v(r)|^{\frac{N+2}{N}}}+|v(r)|^{\frac{4}{N}}|v^{\prime}(r)|
\mathrm{e}^{\gamma_0 |v(r)|^{\frac{N+2}{N}}}\right)\label{11fev201301},
\end{eqnarray}
for any $r\in(0,1/2)$. Now, we claim  that 
\begin{equation}\label{phi-claim}
\varphi^{\prime}\in L^{1}_{\vartheta}(1,1/2).
\end{equation}
Certainly, for $m>1$ such as in \eqref{emb1}, $p>1$  and $q>1$ such that
$$
\frac{4}{qN}+\frac{1}{m}+\frac{1}{p}=1
$$
the H\"{o}lder inequality yields
\begin{equation}\nonumber
\int_{0}^{1}r^{\vartheta}|v|^{\frac{4}{N}}|v^{\prime}|
\mathrm{e}^{\gamma_0|v|^{\frac{N+2}{N}}}\mathrm{d}r\le\left(\int_{0}^{1}r^{\vartheta}|v|^{q}\mathrm{d}r\right)^{\frac{4}{qN}}\left(\int_{0}^{1}r^{\vartheta}|v^{\prime}|^{m}\mathrm{d}r\right)^{\frac{1}{m}}\left(\int_{0}^{1}r^{\vartheta}\mathrm{e}^{\gamma_0 p|v|^{\frac{N+2}{N}}}\mathrm{d}r\right)^{\frac{1}{p}}.
\end{equation}
The above estimate combined with \eqref{emb1}, the embedding \eqref{EbeddingsTM} and the weighted Trudinger-Moser i\-ne\-qua\-li\-ty \cite{JJ2012} gives
\begin{equation}\label{phi-part1}
|v|^{\frac{4}{N}}|v^{\prime}|
\mathrm{e}^{\gamma_0|v|^{\frac{N+2}{N}}}\in L^{1}_{\vartheta}(0,1).
\end{equation}
Analogously, taking $m>1$ such as in
\eqref{emb1}, the
H\"{o}lder inequality  implies
\begin{equation}\nonumber
\int_{0}^{1}r^{\vartheta}|v^{\prime}|
\mathrm{e}^{\gamma_0|v|^{\frac{N+2}{N}}}\ud r\le \left(\int_{0}^{1}r^{\vartheta}|v^{\prime}|^{m}\mathrm{d}r\right)^{\frac{1}{m}}\left(\int_{0}^{1}r^{\vartheta}\mathrm{e}^{\gamma_0\frac{m}{m-1}|v|^{\frac{N+2}{N}}}\mathrm{d}r\right)^{\frac{m-1}{m}}.
\end{equation}
Thus, also from  \cite{JJ2012} we have
\begin{equation}\label{phi-part2}
|v^{\prime}|
\mathrm{e}^{\gamma_0|v|^{\frac{N+2}{N}}}\in L^{1}_{\vartheta}(0,1).
\end{equation}
Hence, combining  \eqref{11fev201301}, \eqref{phi-part1} and \eqref{phi-part2}, we prove \eqref{phi-claim}. For $0<r<s<1/2$ we have 
\begin{equation}\nonumber
|\varphi(r)|\le \varphi\Big(\frac{1}{2}\Big)+\int_{r}^{s}|\varphi^{\prime}(\tau)|\mathrm{d}\tau+\int_{s}^{\frac{1}{2}}|\varphi^{\prime}(\tau)|\mathrm{d}\tau.
\end{equation}
Therefore, 
\begin{equation}\label{phi-calculusF}
r^{\vartheta}|\varphi(r)|\le r^{\vartheta}\varphi\Big(\frac{1}{2}\Big)+\int_{r}^{s}\tau^{\vartheta}|\varphi^{\prime}(\tau)|\mathrm{d}\tau+\frac{r^{\vartheta}}{s^{\vartheta}}\int_{s}^{\frac{1}{2}}\tau^{\vartheta}|\varphi^{\prime}(\tau)|\mathrm{d}\tau.
\end{equation}
From \eqref{phi-claim}, for each $\epsilon>0$  there exists $s>0$ (small) such that
$$\int_{r}^{s}\tau^{\vartheta}|\varphi^{\prime}(\tau)|\mathrm{d}\tau<\frac{\epsilon}{3}.
$$
Also, there exists $0<\delta< s$ such that
$$\delta^{\vartheta}|\varphi\Big(\frac{1}{2}\Big)|<\frac{\epsilon}{3}\;\;\;\mbox{and}\;\;\;
\frac{\delta^{\vartheta}}{s^{\vartheta}}\int_{s}^{\frac{1}{2}}\tau^{\vartheta}|\varphi^{\prime}(\tau)|\mathrm{d}\tau<\frac{\epsilon}{3}.$$
Hence, using \eqref{phi-calculusF}, we conclude that  
$$
r^{\vartheta}|\varphi(r)|<\epsilon, \;\;\;\mbox{for}\;\;\; 0<r<\delta
$$
which completes the proof.
\end{proof}
\begin{lemma} \label{lemma-F0} Suppose that $f$  satisfies \eqref{limsup}. Then,
for each $v\in X_1$, there is $c=c(v)>0$ such that
\begin{equation}\nonumber
    \begin{aligned}
        \sup_{r\in (0,1)}\Big(\frac{N+2}{N}+f(r)\Big)|v(r)|^{f(r)}\le c.
    \end{aligned}
\end{equation} 
Moreover, the constant $c$ can be choose independent of $v\in X_1$ if $\|v\|_{X_1}=1$.
\end{lemma}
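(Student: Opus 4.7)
The plan is to exploit the pointwise bound $|v(r)|\le \|v\|_{X_1}\bigl(-\tfrac{N}{\mu_N}\ln r\bigr)^{N/(N+2)}$ supplied by \eqref{r-estimate} and to split $(0,1)$ into three pieces $(0,\hat\rho)$, $[\hat\rho,\rho]$, $[\rho,1)$ on which the product $(\tfrac{N+2}{N}+f(r))|v(r)|^{f(r)}$ is controlled by distinct mechanisms. On the middle compact interval $[\hat\rho,\rho]$ both $f$ and $v$ are continuous, so the estimate is automatic and contributes only a constant depending on $\hat\rho$, $\rho$ and, through \eqref{r-estimate}, on $\|v\|_{X_1}$.

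Near the origin I would write
\[
|v(r)|^{f(r)}\le \exp\Bigl(f(r)\ln\|v\|_{X_1}+\tfrac{N}{N+2}f(r)\ln\bigl(-\tfrac{N}{\mu_N}\ln r\bigr)\Bigr).
\]
The hypothesis \eqref{limsup} gives $f(r)\le C/|\ln r|^{\sigma}$ for $r$ small, so $f(r)\ln(-\ln r)\le C\ln|\ln r|/|\ln r|^{\sigma}\to 0$, and continuity of $f$ at $0$ with $f(0)=0$ makes $f(r)\ln\|v\|_{X_1}\to 0$ as well. Consequently $|v(r)|^{f(r)}\to 1$, while $\tfrac{N+2}{N}+f(r)\to \tfrac{N+2}{N}$, so $\hat\rho$ can be chosen to make the product bounded on $(0,\hat\rho)$ by a constant depending only on $f$, $N$ and $\|v\|_{X_1}$.

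For the third piece I would pick $\rho$ close enough to $1$ so that the radial bound forces $|v(r)|\le 1/2$ on $[\rho,1)$; this is possible because $(-\tfrac{N}{\mu_N}\ln r)^{N/(N+2)}\to 0$ as $r\to 1^{-}$. Then $|v(r)|^{f(r)}\le 2^{-f(r)}$ and the elementary optimisation $\sup_{x\ge 0}\bigl(\tfrac{N+2}{N}+x\bigr)2^{-x}<\infty$ absorbs the possible blow-up of $f$ near $1$ into the exponential decay coming from $|v|<1$. This is the step where the hypothesis that $f$ is only continuous on $[0,1)$ (not on $[0,1]$) becomes delicate: the smallness of $|v|$ near $1$ is the sole mechanism to tame an unbounded $f$, and it is the part of the argument where I expect the main obstacle to lie.

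For the uniform statement when $\|v\|_{X_1}=1$, the estimate \eqref{r-estimate} becomes $v$-independent, so the cutoffs $\hat\rho$ and $\rho$ together with every intermediate bound above depend only on $N$ and $f$; in particular the $f(r)\ln\|v\|_{X_1}$ term vanishes outright and the threshold $\rho$ solving $(-\tfrac{N}{\mu_N}\ln r)^{N/(N+2)}\le 1/2$ is a universal constant. The overall constant $c$ can therefore be extracted from the maximum of the three uniform bounds obtained on each of the subintervals.
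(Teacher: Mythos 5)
Your proposal is correct and follows essentially the same strategy as the paper: both rely on the radial estimate \eqref{r-estimate}, both exploit $f(r)\to 0$ and $f(r)\ln(-\ln r)\to 0$ near the origin to see $|v|^{f(r)}\to 1$, and both tame a possibly unbounded $f$ near $r=1$ by combining the smallness of $|v(r)|$ there with the elementary fact that $\bigl(\tfrac{N+2}{N}+x\bigr)a^{-x}$ is bounded for $x\ge 0$ when $a>1$. The only difference is presentational: the paper packages all three regimes into a single formula $\bigl(\tfrac{N+2}{N}+f(r)\bigr)e^{-f(r)}\cdot e^{f(r)[\ln(\cdots)+1]}$ and takes the sup directly, while you split $(0,1)$ into three subintervals and bound each separately.
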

\begin{proof}
For each $v\in X_1$, from  \eqref{r-estimate}
\begin{equation}\label{well-a0log}
|v(r)|^{f(r)} \le \Big[\Big(\frac{N}{\mu_{N}}\Big)^{\frac{N}{N+2}}\|v\|_{X_1}\left(-\ln r\right)^{\frac{N}{N+2}}\Big]^{f(r)}.
\end{equation}
Then,
\begin{equation}\label{well-wells}
    \begin{aligned}
       \Big(\frac{N+2}{N}+f(r)\Big)|v(r)|^{f(r)} & =\frac{\frac{N+2}{N}+f(r)}{\mathrm{e}^{f(r)}} \mathrm{e}^{f(r)}|v(r)|^{{f(r)}}\\
        & \le \frac{\frac{N+2}{N}+f(r)}{\mathrm{e}^{f(r)}} \mathrm{e}^{f(r)\big[\ln \big((\frac{N}{\mu_{N}})^{\frac{N}{N+2}}\|v\|_{X_1}\left(-\ln r\right)^{\frac{N}{N+2}}\big)+1\big]}.
    \end{aligned}
\end{equation}
Since $(-\ln r)\to 0$ as $r\to 1$, we get $$\ln \big((\frac{N}{\mu_{N}})^{\frac{N}{N+2}}\|v\|_{X_1}\left(-\ln r\right)^{\frac{N}{N+2}}\big)+1<0$$ for $r$ close to $1$. Hence, by using $0 \le {(\frac{N+2}{N}+x})\mathrm{e}^{-x}\le \frac{N+2}{N}$ for $x\ge0$ we obtain 
\begin{equation}\nonumber
 \limsup_{r\to 1}   \frac{\frac{N+2}{N}+f(r)}{\mathrm{e}^{f(r)}} \mathrm{e}^{f(r)\big(\ln \big(\big(\frac{N}{\mu_{N}}\big)^{\frac{N}{N+2}}\|v\|_{X_1}\left(-\ln r\right)^{\frac{N}{N+2}}\big)+1\big)}\le\frac{N+2}{N}.
\end{equation}
On the other hand, from \eqref{limsup} and $f(0)=0$
\begin{equation}\nonumber
 \lim_{r\to 0}   \frac{\frac{N+2}{N}+f(r)}{\mathrm{e}^{f(r)}} \mathrm{e}^{f(r)\big[\ln \big(\big(\frac{N}{\mu_{N}}\big)^{\frac{N}{N+2}}\|v\|_{X_1}\left(-\ln r\right)^{\frac{N}{N+2}}\big)+1\big]}=\frac{N+2}{N}.
\end{equation}
Thus, from \eqref{well-wells} we may pick 
\begin{equation*}
    c=c(v)=\sup_{r\in (0,1)} \frac{\frac{N+2}{N}+f(r)}{\mathrm{e}^{f(r)}} \mathrm{e}^{f(r)\big[\ln \big(\big(\frac{N}{\mu_{N}}\big)^{\frac{N}{N+2}}\|v\|_{X_1}\left(-\ln r\right)^{\frac{N}{N+2}}\big)+1\big]}<\infty.
\end{equation*}
If $\|v\|_{X_1}=1$, from \eqref{well-a0log} and \eqref{well-wells}, we can take 
\begin{equation*}
    c=\sup_{r\in (0,1)} \frac{\frac{N+2}{N}+f(r)}{\mathrm{e}^{f(r)}} \mathrm{e}^{f(r)\big[\ln \big(\big(\frac{N}{\mu_{N}}\big)^{\frac{N}{N+2}}\left(-\ln r\right)^{\frac{N}{N+2}}\big)+1\big]}<\infty.
\end{equation*}
\end{proof}
\begin{lemma}\label{vc2} Let $v\in X_1$ be non-negative maximizer for $\mathcal{MT}_{N}(f)$. Then, $v$ is a decreasing function and $v\in C^{2}[0,1]$.
\begin{proof}
Let $F: X_1\to \mathbb{R}$ be the supercritical Trudinger-Moser type functional
\begin{equation*}
    F(v)=\int_{0}^{1} r^{N-1}\mathrm{e}^{\mu_{N}|v|^{\frac{N+2}{N}+f(r)}}\ud r.
\end{equation*}
From Proposition~\ref{prop1} and Lemma~\ref{lemma-F0}, we can see that $F\in C^{1}(X_1,\mathbb{R})$ with
\begin{equation}\nonumber
  F^{\prime}(v).h =   \mu_{N}\int_{0}^{1}r^{N-1} \big(\frac{N+2}{N}+f(r)\big)|v|^{\frac{2}{N}+f(r)-1}\mathrm{e}^{\mu_{N}|v|^{\frac{N+2}{N}+f(r)}} v h.
\end{equation}
Then, if $v\in X_1$ is a non-negative maximizer for $\mathcal{MT}_{N}(f)$, the Lagrange multipliers theorem yields
\begin{equation}\label{weaksolution}
c_{N}\int_{0}^{1}r^{N-k}|v^{\prime}|^{k-1}v^{\prime}h^{\prime}\,\mathrm{d}
r=\lambda\int_{0}^{1}r^{N-1} \big(\frac{N+2}{N}+f(r)\big)|v|^{\frac{2}{N}+f(r)}\mathrm{e}^{\mu_{N}|v|^{\frac{N+2}{N}+f(r)}}h\,\mathrm{d}r,\;\; \forall\;\; h\in X_1
\end{equation}
where
\begin{equation}\label{constants}
\lambda=\Big(\mu_N\int_{0}^{1}r^{N-1}\big(\frac{N+2}{N}+f(r)\big)|v|^{\frac{N+2}{N}+f(r)}\mathrm{e}^{\mu_{N}|v|^{\frac{N+2}{N}+f(r)}}\,\mathrm{d}r\Big)^{-1}.
\end{equation}
Following  \cite{Clement-deFigueiredo-Mitidieri},   for each $r\in(0,1)$ and $\rho>0$ let
$h_{\rho}\in X_1$ be given by
\begin{equation}\label{keyfunction}
h_{\rho}(s)=\left\{\begin{aligned}&\;1 &\mbox{if}\quad &0\le s\le r,&\\
&\;1+\frac{1}{\rho}(r-s)&\mbox{if}\quad &r\le s\le r+\rho,&\\
&\;0 &\mbox{if}\quad &s\ge r+\rho.&
\end{aligned}\right.
\end{equation}
By using $h_{\rho}$ in \eqref{weaksolution} and letting
$\rho\rightarrow 0$, we deduce
\begin{equation}\label{integralsolution}
c_{N}r^{N-k}(-|v^{\prime}|^{k-1}v^{\prime})=\lambda\int_{0}^{r}s^{N-1}\big(\frac{N+2}{N}+f(s)\big)|v|^{\frac{2}{N}+f(s)}\mathrm{e}^{\mu_{N}|v|^{\frac{N+2}{N}+f(s)}}\,\mathrm{d}s,\;\;\mbox{a.e
on}\;\; (0,1).
\end{equation}
It follows that $v$ is a  decreasing function  such that  $v\in C^{1}(0,1]$. In addition, from \eqref{integralsolution}
\begin{equation}\label{gradiente}
-v^{\prime}(r)=\left(\frac{\lambda}{c_{N}r^{N-k}}\int_{0}^{r}s^{N-1}\big(\frac{N+2}{N}+f(s)\big)|v|^{\frac{2}{N}+f(s)}\mathrm{e}^{\mu_{N}|v|^{\frac{N+2}{N}+f(s)}}\,\mathrm{d}s\right)^{\frac{1}{k}}.
\end{equation}
Further, from \eqref{r-estimate} 
\begin{equation}\nonumber
\begin{aligned}
|v(r)|^{f(r)}
\le \big(-\frac{N}{\mu_{N}}\ln r\big)^{\frac{N}{N+2}f(r)},\;\; \forall\, r\in (0,1).\\
\end{aligned}
\end{equation}
By \eqref{limsup}, we have  $|\ln r|^{f(r)}\to 1$ as $r\to 0^{+}$ and thus  $|v|^{f(r)}\le 2$ on $(0,\rho)$ for some $\rho>0$ small enough. Thus, from Lemma~\ref{0-beha} we obtain 
\begin{equation}\nonumber
\lim_{r\rightarrow 0^{+}}r^{k}|v|^{\frac{2}{N}+f(r)}\mathrm{e}^{\mu_{N}|v|^{\frac{N+2}{N}+f(r)}}=0.
\end{equation}
Hence, \eqref{gradiente} and L'Hospital's rule yield $\lim_{r\rightarrow 0^{+}}v^{\prime}(r)=0$ and $v\in C^{1}[0,1]$. Using \eqref{gradiente}, we get  $v\in C^{2}(0,1]$ and we can write
\begin{equation}\label{hessian}
v^{\prime\prime}(r)=\frac{v^{\prime}(r)}{k}\left[-\frac{(N-k)}{r}+\frac{r^{N-1}(\frac{N+2}{N}+f(r))|v|^{\frac{2}{N}+f(r)}\mathrm{e}^{\mu_{N}|v|^{\frac{N+2}{N}+f(r)}}}{\int_{0}^{r}s^{N-1}(\frac{N+2}{N}+f(s))|v|^{\frac{2}{N}+f(s)}\mathrm{e}^{\mu_{N}|v|^{\frac{N+2}{N}+f(s)}}\,\mathrm{d}s}\right].
\end{equation}
From \eqref{gradiente},  using  $f(0)=0$ and $v\in C^{1}[0,1]$ we can write
\begin{equation}\label{theta-limit}
\lim_{r\rightarrow 0^{+}} \frac{v^{\prime}(r)}{r}=-\left(\frac{\lambda}{N c_{N}}(\frac{N+2}{N})|v(0)|^{\frac{2}{N}}\mathrm{e}^{\mu_{N}|v(0)|^{\frac{N+2}{N}}}\right)^{\frac{2}{N}}<0.
\end{equation}
Note that  $v^{\prime}<0$ for $0<r\le 1$. In addition, by \eqref{gradiente}
\begin{equation}\nonumber
\begin{aligned}
\frac{v^{\prime}r^{N-1}(\frac{N+2}{N}+f(r))\mathrm{e}^{\mu_{N}|v|^{\frac{N+2}{N}+f(r)}}}{\int_{0}^{r}s^{N-1}(\frac{N+2}{N}+f(s))\mathrm{e}^{\mu_{N}|v|^{\frac{N+2}{N}+f(s)}}\,\mathrm{d}s}&= \frac{\lambda}{c_{N}}\frac{v^{\prime}r^{N-1}(\frac{N+2}{N}+f(r))\mathrm{e}^{\mu_{N}|v|^{\frac{N+2}{N}+f(r)}}}{r^{N-k}(-v^{\prime})^{k}} \\
&= -\frac{\lambda}{c_{N}}\left(-\frac{r}{v^{\prime}}\right)^{k-1}(\frac{N+2}{N}+f(r))\mathrm{e}^{\mu_{N}|v|^{\frac{N+2}{N}+f(r)}}
\end{aligned}
\end{equation}
which  (by \eqref{theta-limit}) converges as  $r\rightarrow 0^{+}$. Hence, from \eqref{hessian} there exists $\lim_{r\rightarrow 0^{+}}v^{\prime\prime}(r)$ and $v\in C^{2}[0,1]$.
\end{proof}
\end{lemma}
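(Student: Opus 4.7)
The plan is to start by showing $v$ satisfies an Euler--Lagrange equation obtained via the method of Lagrange multipliers, then derive a pointwise ODE representation from which both monotonicity and the regularity at both endpoints follow. The only delicate point will be the behavior at the origin, where the combination $|v|^{f(r)}$ could in principle blow up; this is where Lemma~\ref{0-beha} and Lemma~\ref{lemma-F0} become essential.

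First I would verify that the functional $F(v)=\int_0^1 r^{N-1}\mathrm{e}^{\mu_N|v|^{(N+2)/N+f(r)}}\ud r$ is $C^1$ on $X_1$. The finiteness of $F(v)$ for every $v\in X_1$ is Proposition~\ref{prop1}(i), and the G\^ateaux derivative formally reads
\begin{equation*}
F'(v)\cdot h=\mu_N\int_0^1 r^{N-1}\Big(\tfrac{N+2}{N}+f(r)\Big)|v|^{\frac{2}{N}+f(r)-1}\mathrm{e}^{\mu_N|v|^{\frac{N+2}{N}+f(r)}}\,v\,h\,\ud r.
\end{equation*}
The integrability of this expression, uniformly for $v$ on $X_1$-bounded sets, is controlled by Lemma~\ref{lemma-F0}, which bounds the factor $(\frac{N+2}{N}+f(r))|v|^{f(r)}$ pointwise by a constant. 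Hence the Lagrange multiplier theorem applies to the constrained maximization problem $\sup\{F(v):\|v\|_{X_1}=1\}$ and yields a scalar $\lambda>0$ such that the weak identity \eqref{weaksolution} holds for every $h\in X_1$, with $\lambda$ given by \eqref{constants}.

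Next I would plug in the truncated test functions $h_\rho$ from \eqref{keyfunction} and let $\rho\to 0$: a standard computation converts the weak equation into the pointwise integral identity \eqref{integralsolution}. Since the right-hand side there is strictly positive and nondecreasing in $r\in(0,1)$, one reads off both that $v$ is strictly decreasing and that $v\in C^1(0,1]$, via the explicit formula \eqref{gradiente} for $-v'(r)$. So the work reduces to controlling the behavior of the right-hand side of \eqref{gradiente} as $r\to 0^+$ and verifying $v\in C^1[0,1]$ and $v\in C^2[0,1]$.

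For this, I would argue as follows. From \eqref{r-estimate} and condition \eqref{limsup} we have $|v(r)|^{f(r)}\to 1$ as $r\to 0^+$, so $|v|^{f(r)}$ is bounded (say, by $2$) in a right neighborhood of the origin. With $v$ non-increasing and $\|v\|_{X_1}<\infty$, Lemma~\ref{0-beha}, applied with $\vartheta=k$ and $\gamma_0=\mu_N$, then gives
\begin{equation*}
\lim_{r\to 0^+} r^{k}|v(r)|^{\frac{2}{N}+f(r)}\mathrm{e}^{\mu_N|v(r)|^{\frac{N+2}{N}+f(r)}}=0.
\end{equation*}
Applying L'H\^opital to the quotient in \eqref{gradiente} between the integral $\int_0^r s^{N-1}(\cdots)\ud s$ and $r^{N-k}$ yields $v'(r)\to 0$ as $r\to 0^+$, establishing $v\in C^1[0,1]$. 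Finally, differentiating \eqref{gradiente} produces the expression \eqref{hessian} for $v''$, and the limit \eqref{theta-limit} plus the convergence of the bracketed term in \eqref{hessian} (computed by substituting \eqref{gradiente}) give $\lim_{r\to 0^+}v''(r)$, so $v\in C^2[0,1]$. The main obstacle is precisely the first limit above: without condition \eqref{limsup} the term $|v|^{f(r)}$ near the origin cannot be controlled, and Lemma~\ref{0-beha} would not apply to the perturbed exponent $\frac{2}{N}+f(r)$; with \eqref{limsup} in hand, however, $f(r)$ is absorbed harmlessly into the bound and the argument goes through.
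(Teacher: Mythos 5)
Your proposal is correct and follows essentially the same route as the paper: establish $C^1$ regularity of $F$ via Proposition~\ref{prop1} and Lemma~\ref{lemma-F0}, apply Lagrange multipliers, convert to the pointwise integral identity via the test functions $h_\rho$, read off monotonicity and $C^1(0,1]$ regularity from \eqref{gradiente}, control $|v|^{f(r)}$ near the origin via \eqref{r-estimate} and \eqref{limsup}, invoke Lemma~\ref{0-beha} and L'H\^opital to get $v\in C^1[0,1]$, and then differentiate \eqref{gradiente} and use \eqref{theta-limit} for $C^2[0,1]$. One small slip: since you can only guarantee $|v|^{f(r)}\le 2$ near $0$, you need to apply Lemma~\ref{0-beha} with $\gamma_0=2\mu_N$ (or any $\gamma_0\ge 2\mu_N$), not $\gamma_0=\mu_N$, to absorb $\mathrm{e}^{\mu_N|v|^{(N+2)/N+f(r)}}\le\mathrm{e}^{2\mu_N|v|^{(N+2)/N}}$; the lemma holds for arbitrary $\gamma_0>0$, so the argument still goes through.
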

\section{Proof of Theorem~\ref{thm2}: k-admissible extremals}
In this section we use a maximizer $v\in X_{1}$ for  $\mathcal{MT}_{N}(f)$, obtained in the Section~\ref{sec3}, to build an extremal function $u\in\Phi^{k}_{0,\mathrm{rad}}(B)$ for  $ \mathrm{MT}_{N}(f)$. 
For that, consider $u: B\rightarrow\mathbb{R}$  given by
\begin{equation}\label{u0difinition}
u(x)=-v(r),\; r=|x| \;\; \mbox{for}\;\; x\in \overline{B}.
\end{equation}
\begin{lemma}\label{k-admi} Let $u: B\rightarrow\mathbb{R}$ defined by  \eqref{u0difinition}. Then  $u\in \Phi^{k}_{0,\mathrm{rad}}(B)$ is a maximizer for $ \mathrm{MT}_{N}(f)$.
\end{lemma}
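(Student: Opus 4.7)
The plan is to take the non-negative maximizer $v\in X_1$ of $\mathcal{MT}_N(f)$ produced in Section~\ref{sec3} and verify that its radial lift $u(x)=-v(|x|)$ lies in $\Phi^{k}_{0,\mathrm{rad}}(B)$ and attains $\mathrm{MT}_N(f)$. From Lemma~\ref{vc2} we already know $v\in C^{2}[0,1]$, with $v^{\prime}(0)=0$ (so that $u$ extends smoothly to the origin), $v^{\prime}(r)<0$ on $(0,1]$, $v(1)=0$, and that $v$ satisfies the Euler-Lagrange identity \eqref{integralsolution}. These ingredients already give $u\in C^{2}(\overline{B})$ with $u|_{\partial B}=0$, so what remains is the admissibility condition $F_j[u]\ge 0$ for $j=1,\dots,k$ and the evaluation of the Trudinger-Moser functional at $u$.

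For the admissibility step I would first compute the eigenvalues of $D^{2}u$: the radial eigenvalue is $\lambda_r=-v^{\prime\prime}(r)$ (simple), while the tangential eigenvalue is $\lambda_t=-v^{\prime}(r)/r$ with multiplicity $N-1$. The strict negativity of $v^{\prime}$ on $(0,1]$ together with \eqref{theta-limit} at the origin forces $\lambda_t>0$ on $[0,1]$. A short calculation then yields the divergence-form identity
\begin{equation*}
F_k[u] \;=\; \frac{\binom{N-1}{k-1}}{k\,r^{N-1}}\,\frac{d}{dr}\bigl(r^{N-k}(-v^{\prime})^{k}\bigr),
\end{equation*}
and differentiating \eqref{integralsolution} shows the right-hand side equals $\tfrac{\lambda}{c_N k}\binom{N-1}{k-1}(\tfrac{N+2}{N}+f(r))|v|^{\frac{2}{N}+f(r)}\mathrm{e}^{\mu_N|v|^{\frac{N+2}{N}+f(r)}}\ge 0$, hence $F_k[u]\ge 0$.

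The delicate point, which I expect to be the main obstacle, is to deduce the full hierarchy $F_j[u]\ge 0$ for $1\le j<k$ from $F_k[u]\ge 0$ alone. For this I would exploit the radial structure to write
\begin{equation*}
F_j[u] \;=\; \binom{N-1}{j-1}\,\lambda_t^{\,j-1}\Bigl(\lambda_r+\tfrac{N-j}{j}\lambda_t\Bigr).
\end{equation*}
Since $\lambda_t>0$, the bound $F_k[u]\ge 0$ is equivalent to $\lambda_r+\tfrac{N-k}{k}\lambda_t\ge 0$; because $j\mapsto (N-j)/j$ is decreasing on $\{1,\dots,k\}$ and $\lambda_t>0$, this automatically upgrades to $\lambda_r+\tfrac{N-j}{j}\lambda_t\ge 0$ for every $j\le k$, giving $F_j[u]\ge 0$ and thus $u\in\Phi^{k}_{0,\mathrm{rad}}(B)$.

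To finish, the radial identities \eqref{NormX} and \eqref{FX2} yield $\|u\|_{\Phi^{k}_0}=\|v\|_{X_1}=1$ and
\begin{equation*}
\int_{B}\mathrm{e}^{\mu_N|u|^{\frac{N+2}{N}+f(|x|)}}\,\mathrm{d}x \;=\; \omega_{N-1}\int_{0}^{1}r^{N-1}\mathrm{e}^{\mu_N|v|^{\frac{N+2}{N}+f(r)}}\,\mathrm{d}r \;=\; \omega_{N-1}\,\mathcal{MT}_N(f).
\end{equation*}
Combined with the a priori inequality $\mathrm{MT}_N(f)\le \omega_{N-1}\mathcal{MT}_N(f)$ recorded at the beginning of Section~3, this forces equality and exhibits $u$ as a maximizer of $\mathrm{MT}_N(f)$.
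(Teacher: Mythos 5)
Your proposal is correct and arrives at the same conclusion as the paper, but it handles the $k$-admissibility step differently. The paper verifies $F_j[u]\ge 0$ for \emph{every} $j\le k$ in one uniform computation: starting from the radial divergence-form identity $F_j[u]=\frac{1}{j}\binom{N-1}{j-1}r^{1-N}\bigl(r^{N-j}(-v^{\prime})^{j}\bigr)^{\prime}$ and the Euler--Lagrange relation $-v^{\prime}=(\Phi(r)/r^{N-k})^{1/k}$, it shows
\begin{equation*}
\bigl(r^{N-j}(-v^{\prime})^{j}\bigr)^{\prime}=r^{N-2j}\,\Phi(r)^{j/k}\Bigl[\tfrac{N-2j}{r}+\tfrac{j}{k}\tfrac{\Phi^{\prime}(r)}{\Phi(r)}\Bigr],
\end{equation*}
which is nonnegative because $N=2k$ gives $N-2j\ge 0$ for $j\le k$ and because $\Phi^{\prime}\ge 0$. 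You instead prove only $F_k[u]\ge 0$ from the ODE, then invoke the eigenvalue decomposition $F_j[u]=\binom{N-1}{j-1}\lambda_t^{\,j-1}\bigl(\lambda_r+\tfrac{N-j}{j}\lambda_t\bigr)$ with $\lambda_t=-v^{\prime}/r>0$, and upgrade to $j<k$ by the monotonicity of $(N-j)/j$. Both routes rest on the same inputs (the Euler--Lagrange identity, $v^{\prime}<0$, $N=2k$), and the final maximizer identification via \eqref{NormX}, \eqref{FX2} and $\mathrm{MT}_N(f)\le\omega_{N-1}\mathcal{MT}_N(f)$ is identical. Your argument makes transparent the structural reason that admissibility ``propagates downward'' from $F_k$ to the lower $F_j$, while the paper's approach avoids the eigenvalue formula entirely and is a bit more self-contained; neither buys materially more than the other here.
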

\begin{proof}
We can assume that the maximizer $v\in X_{1}$ for $\mathcal{MT}_{N}(f)$ is a nonnegative function.  
Thus, from Lemma~\ref{vc2} we have $v\in C^{2}[0,1]$ and consequently $u\in C^{2}(B)$ with $u_{|\partial B}=0$.  
To conclude that  $u\in \Phi^{k}_{0,\mathrm{rad}}(B)$, we only need to show that $F_j[u]\ge 0$, for $j=1,2,\dots, k.$
From the radial form of the $k$-Hessian operator (see, \cite{Wei}), we have
$$
F_{j}[u]=\frac{1}{j}\binom{N-1}{j-1}r^{1-N}\left(r^{N-j}(u^{\prime})^{j}\right)^{\prime}.
$$
Thus,  it enough verify that 
$$
\left(r^{N-j}(u^{\prime})^{j}\right)^{\prime}\ge 0, \quad j=1,\dots, k,
$$
or equivalently
\begin{equation}\label{v-kadmi}
\left(r^{N-j}(-v^{\prime})^{j}\right)^{\prime}\ge 0, \quad j=1,\dots, k.
\end{equation}
Using \eqref{gradiente} and the assumption $N=2k$, it is easy to show that 
\begin{equation}\nonumber
\left(r^{N-j}(-v^{\prime})^{j}\right)^{\prime} =r^{N-2j}\left[\Phi (r)\right]^{\frac{j}{k}}\left[\frac{N-2j}{r}+\frac{j}{k}\frac{\Phi^{\prime}(r)}{\Phi(r)}\right],
\end{equation}
where 
$$
\Phi(r)=\frac{\lambda}{c_{N}}\int_{0}^{r}s^{N-1}\Big(\frac{N+2}{N}+f(s)\Big)|v|^{\frac{2}{N}+f(s)}\mathrm{e}^{\mu_{N}|v|^{\frac{N+2}{N}+f(s)}}\,\mathrm{d}s.
$$
Thus, since $\Phi^{\prime}(r)\ge 0$, for  $0<r\le 1$, we conclude that \eqref{v-kadmi} holds. Thus, $u\in \Phi^{k}_{0,\mathrm{rad}}(B)$. Moreover, from \eqref{NormX} and \eqref{FX2} we have $\|u\|_{\Phi^{k}_0}=\|v\|_{X_1}=1$ and 
\begin{equation}\nonumber
\begin{aligned}
\int_{B}\mathrm{e}^{\mu_{N}|u|^{\frac{N+2}{N}+f(|x|)}}\ud x&=\omega_{N-1}\int_{0}^{1} r^{N-1}\mathrm{e}^{\mu_{N}|v|^{\frac{N+2}{N}+f(r)}}\ud r\\
&=\omega_{N-1}\mathcal{MT}_{N}(f)\ge \mathrm{MT}_{N}(f).
\end{aligned}
\end{equation} 
Thus, 
\begin{equation}\nonumber
\begin{aligned}
\int_{B}\mathrm{e}^{\mu_{N}|u|^{\frac{N+2}{N}+f(|x|)}}\ud x= \mathrm{MT}_{N}(f),
\end{aligned}
\end{equation} 
which is the desired conclusion.
\end{proof}


\end{document}